\documentclass[a4 ,10pt]{article}
\usepackage[english]{babel}
\usepackage[utf8]{inputenc}
\usepackage{amssymb}
\usepackage{amsmath}
\usepackage[all]{xy}
\usepackage{xcolor}
\usepackage{auto-pst-pdf}

\usepackage{fancyhdr}
\bibliographystyle{unsrt} 

\def\R{\mathbb{ R}}
\def\N{\mathbb{ N}}
\newtheorem{theorem}{Theorem}[section]
\newtheorem{theorem and definition}{Theorem and definition}
\newtheorem{corollary}[theorem]{Corollary}

\newtheorem{lemma}[theorem]{Lemma}
\newtheorem{proposition}[theorem]{Proposition}
\newenvironment{proof}[1][Proof]{\textbf{#1.} }{\ \rule{0.5em}{0.5em}}

\paperheight=27cm
\paperwidth=21cm
\setlength\textwidth{15.5cm}
\hoffset=-1in
\setlength\marginparsep{0cm}
\setlength\marginparwidth{1cm}
\setlength\marginparpush{0cm}
\setlength\evensidemargin{2.5cm}
\setlength\oddsidemargin{2.5cm}
\setlength\topmargin{2.5cm}
\setlength\headheight{1cm}
\setlength\headsep{1 cm}
\voffset=-1in
\setlength\textheight{22cm}
\setlength{\parindent}{0,5cm}
\setlength{\parskip}{1.5mm}

\pagestyle{fancy}
\fancyhf{}
\fancyhead[RE]{\textit{\nouppercase{\leftmark}}}
\fancyhead[LO]{\textit{\nouppercase{\rightmark}}}
\fancyfoot[RO,LE]{\thepage}
\title{On the Marchenko system and the long-time behavior of multi-soliton solutions of the one-dimensional Gross-Pitaevskii equation}
\date{\today}

\begin{document}

\author{
\renewcommand{\thefootnote}{\arabic{footnote}} Haidar Mohamad \footnotemark[1]}

\footnotetext[1]{Département de Mathématiques,
 Université Paris-Sud 11, F-91405 Orsay Cedex.  {\tt haidar.mohamad@math.u-psud.fr}}
\maketitle

\begin{abstract}

We establish a rigorous well-posedness results for the Marchenko system associated to the scattering 
theory of the one dimensional Gross-Pitaevskii equation (GP). Under some assumptions on the scattering data, these well-posedness results
provide regular solutions for (GP). We also construct particular solutions, called $N-$soliton solutions as an approximate superposition 
of traveling waves. A study for the asymptotic behaviors of such solutions when $t\rightarrow\pm \infty$ is also made.          
\end{abstract}

\section{Introduction}
In any space dimension $N$, it is usual to call nonlinear Schr\"odinger equation (NLS) the evolution equation
\begin{equation}\label{SN}
\begin{array}{lr}
i\partial_tu+\Delta u+f(|u|^2)u=0,& x\in \mathbb{R}^N,\quad t\in \R\end{array}
\end{equation}
where $f: \mathbb{R}^+\rightarrow \mathbb{R}$ is some smooth function. We often complete this equation by 
the boundary condition at infinity 
$$\lim_{|x|\rightarrow +\infty}|u(t,x)|^2 = \rho_0,$$ 
where the constant $\rho_0 \geq 0$ is such that $f(\rho_0)=0.$ Equation (NLS) is called focusing when $f'(\rho_0)>0$ and defocusing when $f'(\rho_0)<0.$

These equations are widely used as models in many domains of mathematical physics, especially in non-linear optic,  
superfluidity,  superconductivity and condensation of Bose-Einstein (see for example \cite{NS_superfluid, Bose_cond, Bose_Einstein_cond}). 
In non-linear optic case, the quantity $|u|^2$ corresponds to a light intensity, 
in that of the Bose-Einstein condensation, it corresponds to a density of condensed particles. 

In the following, we extend by Gross-Pitaevskii equation the one-dimensional non-linear Schr\"odinger equation 
with cubic nonlinearity, namely
\begin{equation}\label{GP_intro}\tag{GP}
i\partial_tu+\partial_x^2 u+(1-|u|^2)u=0,
\end{equation}
and for which the data at infinity satisfies $0\neq \rho_0 = 1$.

\section*{Multi-soliton solutions}

In the early seventies, Shabat and Zakharov discovered an integrable system for the one-dimensional cubic Schr\"odinger equation. 
They also proposed a semi-explicit method of resolution through the theory of inverse scattering. Their results were presented in two short articles, 
the first one is devoted to the focusing case \cite{Zakharof_Shabat_foc} and the second one is devoted to the defocusing case (i.e. to the
Gross-Pitaevskii equation \eqref{GP_intro}) \cite{Zakharof_Shabat}.
A similar structure was discovered a few years earlier by, Green, Kruskal and Miura \cite{GGKM} for Korteweg - de Vries equation
\begin{equation}\label{KDV}\tag{KdV}
 \partial_tv +\partial^3_xv- 6v\partial_xv = 0.
\end{equation}
These equations have in common special solutions called {\it solitons}, which play an important role in dynamics in the long time.  
In the following, we extend by soliton a progressive wave solution, i.e. a solution of the form
$$
u(t,x) = U(x-ct)
$$
where $U$ is the wave profile and $c$ its speed. For the Gross-Pitaevskii equation \eqref{GP_intro}, 
such solutions (of finite Ginzburg-Landau energy except for trivial constant solutions) exist if and only if the speed $c$ 
satisfies $c \in  (-\sqrt{2},\sqrt{2}).$ In this case, for a fixed speed, the profile is unique modulo the invariants of the equation: specifically
$$
u(t,x) = e^{i\theta_0} U_c(x-x_0-ct)
$$
where $U_c$ is explicitly given by 
\begin{equation}\label{soliton}
U_c(x) = \sqrt{1-\frac{c^2}{2}}\tanh\left(\sqrt{1-\frac{c^2}{2}} \frac{x}{\sqrt{2}}\right)+ i\frac{c}{\sqrt{2}}
\end{equation}
and  $\theta_0,$  $x_0 \in \R$ are arbitrary and reflect the invariance by renormalization of the phase and translation. 

We will prove the following result : 
 \begin{theorem}\label{GP_copm}
Let $N\geq 1$, $c_1 < \cdots < c_N \in (-\sqrt{2},\sqrt{2})$, $\theta_0\in \R$ and $x_1,\cdots,x_N\in \R$ be arbitrary and fixed.  There exists
a smooth solution $u$ for the Gross-Pitaevskii equation \eqref{GP_intro} on whole $\R\times \R$ such that
$$
\lim_{t\rightarrow \pm\infty}u(t,x + c_k t) = e^{i(\theta_0+\theta_k)} A_{k}^{\pm}U_{c_k}(x - x_k - \eta_{k}^{\pm}),$$
for all $x \in \mathbb{R}$ and $k\in \{1,\cdots,N\},$ with 
$$\eta_{k}^+ = -\frac{1}{\sqrt{2-c_k^2}}\sum_{j=k+1}^{N}\log\left(\frac{2-c_jc_k + \sqrt{2-c_j^2}\sqrt{2-c_k^2}}{2-c_jc_k-\sqrt{2-c_j^2}\sqrt{2-c_k^2}
}\right)\quad , \quad
A_{k}^+ =e^{2i\sum_{j=k+1}^{N}\theta_j},$$
$$\eta_{k}^- = -\frac{1}{\sqrt{2-c_k^2}}\sum_{j=1}^{k-1}\log\left(\frac{2-c_jc_k + \sqrt{2-c_j^2}\sqrt{2-c_k^2}}{2-c_jc_k-\sqrt{2-c_j^2}\sqrt{2-c_k^2}
}\right)\quad , \quad
A_{k}^- =e^{2i\sum_{j=1}^{k-1}\theta_j},$$
and
$$\theta_j = \arccos(\frac{c_j}{\sqrt{2}}),\quad j=1,\cdots,N.$$
\end{theorem}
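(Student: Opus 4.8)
The plan is to run the inverse-scattering construction set up in the preceding sections in the \emph{reflectionless} regime, and then read off the asymptotics from the resulting closed-form expression. First I would translate the physical data $(c_k,\theta_0,x_k)$ into admissible scattering data: since each $c_k\in(-\sqrt2,\sqrt2)$, the soliton of speed $c_k$ corresponds to a discrete spectral point $\lambda_k$ on the relevant contour, conveniently parametrised through $\theta_k=\arccos(c_k/\sqrt2)$ (so that $c_k=\sqrt2\cos\theta_k$ and $\sqrt{2-c_k^2}=\sqrt2\sin\theta_k$), to which one attaches a norming constant $\gamma_k$ whose modulus encodes the translation $x_k$ and whose argument encodes the phases. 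With reflection coefficient $r\equiv0$ and these $N$ pairs $(\lambda_k,\gamma_k)$, the well-posedness result proved above applies (possibly after realising the pure $N$-soliton data as a limit of data covered by that theorem) and yields a smooth solution $u$ of \eqref{GP_intro} on $\R\times\R$; checking the case $N=1$ pins down the dictionary so that the single soliton is exactly $e^{i\theta_0}U_{c_1}(\,\cdot-x_1)$, with $U_{c_1}$ as in \eqref{soliton}.

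Next I would make the Marchenko system explicit. Because $r\equiv0$ the Marchenko integral equations degenerate to a linear algebraic system of size $N$ with a Cauchy-type kernel built from the $\lambda_k$ and the time-evolved norming constants; solving it by Cramer's rule writes $u(t,x)$ as a ratio of $N\times N$ determinants — equivalently, as a logarithmic derivative of a $\tau$-function — whose entries have the form $1+(\text{rational in }\lambda)\,e^{\Phi_j(t,x)}$ with phases $\Phi_j(t,x)$ affine in $x$ and $t$. The time dependence is dictated by the AKNS/Lax time flow, which transports the $j$-th mode with group velocity $c_j$, so $\Phi_j$ is essentially a function of $x-c_jt$. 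The two-soliton interaction integral that couples modes $j$ and $k$ is the one that produces the logarithm appearing in $\eta_k^\pm$: with the substitutions above its argument collapses to a ratio of the form $\sin^2\!\tfrac{\theta_j+\theta_k}{2}\big/\sin^2\!\tfrac{\theta_j-\theta_k}{2}$, i.e. exactly $\frac{2-c_jc_k+\sqrt{2-c_j^2}\sqrt{2-c_k^2}}{2-c_jc_k-\sqrt{2-c_j^2}\sqrt{2-c_k^2}}$.

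Then comes the asymptotic analysis, which is the heart of the argument. Fix $k$ and set $x\mapsto x+c_kt$. As $t\to+\infty$ the phases $\Phi_j$ with $c_j>c_k$ (indices $j>k$) run off to one end and those with $c_j<c_k$ to the other; in the determinant ratio I would factor out the dominant exponentials row by row and column by column, after which all modes but one freeze and the ratio converges — uniformly for $x$ in compact sets — to the single-soliton $\tau$-function associated with $\lambda_k$, but with its norming constant multiplied by the product of the ``crossing'' factors coming from the solitons $j>k$. That correction splits into a real part, which after passing through the $\log$ reproduces $\eta_k^+=-\tfrac{1}{\sqrt{2-c_k^2}}\sum_{j=k+1}^{N}\log\frac{2-c_jc_k+\sqrt{2-c_j^2}\sqrt{2-c_k^2}}{2-c_jc_k-\sqrt{2-c_j^2}\sqrt{2-c_k^2}}$, and a unimodular part, which produces the phase $A_k^+=e^{2i\sum_{j=k+1}^N\theta_j}$; combined with the $N=1$ normalisation this gives the claimed limit $e^{i(\theta_0+\theta_k)}A_k^+U_{c_k}(x-x_k-\eta_k^+)$. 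The case $t\to-\infty$ is identical with the roles of $\{j<k\}$ and $\{j>k\}$ interchanged, which is precisely why $\eta_k^-$ and $A_k^-$ carry sums over $j<k$.

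The main obstacle I anticipate is twofold, and both parts are specific to \eqref{GP_intro} rather than to the focusing case. First, pinning down the scattering-to-physical dictionary: the nonzero background $\rho_0=1$ forces the spectral problem onto a two-sheeted surface, and the Jost solutions are normalised differently at $x\to+\infty$ and $x\to-\infty$ by a phase; this mismatch is exactly the mechanism that generates the multiplicative factors $A_k^\pm$ and the global phase $\theta_0$, so the bookkeeping must be done with care. Second, upgrading the formal ``factor out the dominant exponential'' step to a genuine limit: one must show the subdominant terms vanish and that the determinant ratio converges to \eqref{soliton} up to the stated modulation, with explicit control of the error as $t\to\pm\infty$ and uniformity in $x$ on compact sets. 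Everything else — the reduction of Marchenko to finite linear algebra, Cramer's rule, and the evaluation of the interaction integral — is routine once the set-up and the dictionary are in place.
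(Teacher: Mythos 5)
Your proposal follows essentially the same route as the paper: with reflection coefficient zero the Marchenko system collapses to a finite linear algebraic system, one passes to the frame $\eta=x-c_kt$, tracks which exponential factors blow up or vanish as $t\to\pm\infty$, and evaluates the resulting reduced system by Cramer's rule, where Cauchy-type determinant identities in the variables $\theta_j=\arccos(c_j/\sqrt2)$ yield exactly the ratio $\sin^2\tfrac{\theta_j+\theta_k}{2}/\sin^2\tfrac{\theta_j-\theta_k}{2}$ for the shifts $\eta_k^\pm$ and the unimodular factors $A_k^\pm$. The only cosmetic differences (phrasing the solution as a $\tau$-function ratio and factoring exponentials in the determinants, versus the paper's passage to the limit directly in the linear system before applying Cramer's rule) do not change the substance of the argument.
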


In other words, in each coordinate system in translation with speed $c_k$, the solution $u$ behaves asymptotically when
$t \to \pm \infty$ like the soliton $U_{c_k}$ translated appropriately, and the translation factors when  
$t = \pm \infty$, given by $\eta_k^+$ and $\eta_k^-,$ can be interpreted as a shift of binary collisions between the solitons.    
A similar result for the focusing equation was described in \cite{Zakharof_Shabat_foc},  
but the equivalence of Theorem \ref{GP_copm} has not been proved by \cite{Zakharof_Shabat}. 

The solutions of Theorem \ref{GP_copm} are obtained via an integrable system 
\footnote{More exactly, through a family of integrable system depending on two real variables.} called Marchenko system. 

Let $\beta :\mathbb{R}\rightarrow \mathbb{C}$ and $N\in \mathbb{N}^*.$
For $\xi \in \mathbb{R}$, we set
$$\lambda =\lambda(\xi)=\sqrt{\xi^2+\frac{1}{2}},$$
$$c_1(\xi)=\beta(\lambda)+\beta(-\lambda),$$
$$c_2(\xi)=\frac{\beta(\lambda)-\beta(-\lambda)}{\lambda}.$$
Assume that the function $\xi \mapsto \beta(\lambda(\xi))$ belongs to the space $L^1(\mathbb{R})\cap L^2(\mathbb{R}).$ 
Consider: 
\begin{enumerate}
 \item $N$ real numbers $-\frac{1}{\sqrt{2}}< \lambda_1<...<\lambda_N < \frac{1}{\sqrt{2}}.$
 \item $N$ real strictly negative numbers  $\mu_1,..., \mu_N$.
\end{enumerate}
Finally,  we define the functions of real variable
$$F_{11}(z) = \sum_{k=1}^{N}\mu_k\lambda_ke^{-\nu_kz},\quad F_{12}(z) = \frac{1}{2\pi}\int_{-\infty}^{+\infty}c_1(\xi)e^{i\xi z}d\xi,$$
$$F_{21}(z) = \sum_{k=1}^{N}\mu_ke^{-\nu_kz},\quad F_{22}(z) = \frac{1}{2\pi}\int_{-\infty}^{+\infty}c_2(\xi)e^{i\xi z}d\xi,$$
$$F_1 = F_{12}-F_{11},\quad F_2 = F_{22}-F_{21},$$
with $\nu_k =\sqrt{\frac{1}{2}-\lambda_k^2}, k\in {1,...,N}.$

\begin{proposition}\label{wp_Marchenko_1} 
Let $n\geq 0$ be such that 
\begin{equation}\label{hyp_beta_intro}
 \lambda\mapsto \lambda^{n+2}\beta(\lambda) \in L^\infty([\frac{1}{\sqrt{2}},+\infty[).
\end{equation}
There exists a constant $\epsilon_0>0$, depending on $N$, $n$, $\lambda_i$ and $\mu_i$, such that if
$$\|\lambda^n\beta(.)\|_{L^\infty([\frac{1}{\sqrt{2}},+\infty[)} \leq \epsilon_0$$ 
then for every $x\in \mathbb{R}$ the Marchenko system (with parameter $x$)
\begin{equation}\label{marchenko1}
\left\{\begin{array}{lr}
2\sqrt{2}\Psi_1(x,y)=F_2(x+y)\\
\quad \quad \quad\quad\quad\quad 
-\int_{x}^{+\infty}[\sqrt{2}\Psi_2(x,s)(F_1(s+y)-iF'_2(s+y))+\Psi_1(x,s)F_2(s+y)]ds,\\
2\sqrt{2}\Psi_2(x,y)=\sqrt{2}(F_1(x+y)+iF'_2(x+y))\\
\quad \quad \quad\quad\quad \quad
-\int_{x}^{+\infty}[\sqrt{2}\Psi_1(x,s)(F_1(s+y)+iF'_2(s+y))+\Psi_2(x,s)F_2(s+y)]ds,
\end{array}\right.
\end{equation}
has a unique solution 
$\Psi(x,.)\in (H^n_y(y\geq x))^2$. Moreover,  the function
$$\underline{\Psi}:\mathbb{R}\times \mathbb{R}^+\rightarrow \mathbb{C}^2: \underline{\Psi}(x,p)= \Psi(x,x+p),$$
belongs to the space $\mathcal{C}^k(\mathbb{R},(H^{n-k}(\mathbb{R}^+))^2),\quad k\in \{0,...,n-1\}.$
\end{proposition}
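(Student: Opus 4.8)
The plan is to set up the Marchenko system \eqref{marchenko1} as a fixed-point equation in a suitable Banach space and to verify that the integral operator is a contraction when the continuous scattering datum $\beta$ is small in the $L^\infty$-sense dictated by \eqref{hyp_beta_intro}. Concretely, for fixed $x$ I would write the system as $\Psi = \mathcal{F}_x + \mathcal{K}_x\Psi$, where $\mathcal{F}_x$ collects the inhomogeneous terms $F_2(x+y)$ and $\sqrt 2(F_1(x+y)+iF_2'(x+y))$, and $\mathcal{K}_x$ is the Hilbert--Schmidt-type operator on $(L^2(y\ge x))^2$ with kernel built from $F_1, F_2, F_2'$ on the half-line $s\ge x$. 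The first task is to estimate the norms of these objects. The discrete parts $F_{11}, F_{21}$ are finite sums of decaying exponentials $\mu_k\lambda_k e^{-\nu_k z}$, $\mu_k e^{-\nu_k z}$ with $\nu_k>0$, so their contribution to both $\mathcal{F}_x$ and $\mathcal{K}_x$ is smooth, exponentially decaying in $x+y$, and its operator norm on $y\ge x$ is uniformly bounded (indeed decaying) in $x$; crucially the $\epsilon_0$ is allowed to depend on $N,n,\lambda_i,\mu_i$, so these terms need only be controlled, not made small. The continuous parts $F_{12}, F_{22}$ are Fourier transforms of $c_1,c_2$, which are expressed through $\beta(\pm\lambda(\xi))$; the hypothesis $\beta\in L^1\cap L^2$ (in the $\xi$ variable) together with the smallness $\|\lambda^n\beta\|_{L^\infty([1/\sqrt2,\infty))}\le\epsilon_0$ and the pointwise bound $|\lambda^{n+2}\beta|\le C$ should give, by Plancherel and elementary interpolation, that $\|F_{12}\|_{L^2}, \|F_{22}\|_{L^2}$ and the relevant weighted/derivative norms up to order $n$ are all $\lesssim\epsilon_0$ (plus whatever is forced by the $L^1\cap L^2$ assumption, which I would also keep track of). Then $\|\mathcal{K}_x\|_{\mathcal L((L^2)^2)}\le C(N,n,\lambda_i,\mu_i) + C\epsilon_0$, and — here is the point — one must arrange that the \emph{whole} operator norm, including the discrete contribution, is $<1$.

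This forces a refinement: the discrete part of $\mathcal{K}_x$ is not small, so a naive contraction argument fails. The standard remedy for Marchenko-type systems with a finite-rank discrete component is to solve the discrete part exactly by linear algebra and treat only the continuous part perturbatively. I would split $\mathcal{K}_x = \mathcal{K}_x^{\mathrm{d}} + \mathcal{K}_x^{\mathrm{c}}$, observe that $\mathcal{K}_x^{\mathrm{d}}$ has finite rank (range spanned by $y\mapsto e^{-\nu_k y}$, $k=1,\dots,N$), so $I-\mathcal{K}_x^{\mathrm{d}}$ is invertible on $(L^2(y\ge x))^2$ precisely when a certain $2N\times 2N$ matrix $M(x)$ — whose entries are the $e^{-(\nu_j+\nu_k)x}$-weighted inner products — is invertible, and a determinant computation (a Cauchy-like determinant, strictly positive because the $\mu_k<0$ and $\nu_k>0$ give a sign-definite Gram structure) shows $\det M(x)\neq 0$ for all $x\in\mathbb R$ with $\|(I-\mathcal{K}_x^{\mathrm{d}})^{-1}\|$ bounded by a constant $C(N,\lambda_i,\mu_i)$ uniformly in $x$. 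Then \eqref{marchenko1} becomes $\Psi = (I-\mathcal{K}_x^{\mathrm{d}})^{-1}\mathcal{F}_x + (I-\mathcal{K}_x^{\mathrm{d}})^{-1}\mathcal{K}_x^{\mathrm{c}}\Psi$, and choosing $\epsilon_0$ so that $C(N,\lambda_i,\mu_i)\cdot C\epsilon_0 < 1/2$ makes this a genuine contraction on $(L^2(y\ge x))^2$, yielding the unique $L^2$ solution.

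Next I would bootstrap regularity in $y$. Differentiating the system $k$ times in $y$ ($k\le n$), the derivatives $\partial_y^k\Psi$ satisfy a system of the same structural form with the same operator $\mathcal{K}_x$ (after integration by parts in $s$, which produces only boundary terms at $s=x$ involving lower-order derivatives of $\Psi$ already controlled, plus terms with $F_j^{(k)}$, $F_j^{(k+1)}$); since $F_{12}^{(k)}, F_{22}^{(k)}$ are in $L^2$ for $k\le n$ by the weighted bound on $\beta$, and $F_2'$ appears, so one effectively needs $F_{22}$ up to order $n+1$, which is exactly why hypothesis \eqref{hyp_beta_intro} is stated with $n+2$ rather than $n$ (the extra two derivatives cover $F_2'$ and the one-derivative loss in the kernel). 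Iterating gives $\Psi(x,\cdot)\in (H^n_y(y\ge x))^2$ with norm bounded uniformly on compact $x$-intervals. Finally, for the joint regularity statement about $\underline\Psi(x,p)=\Psi(x,x+p)$: after the change of variables $y=x+p$, $s=x+q$, the system has kernel and data depending on $x$ only through translations $F_j(2x+p+q)$ and through the exponentials $e^{-\nu_k(2x+\cdots)}$, so $x$-differentiation trades for $p$-differentiation (and $q$-integration-by-parts), and one shows by the same fixed-point/differentiation scheme, now in the space $\mathcal C^k(\mathbb R, (H^{n-k}(\mathbb R^+))^2)$, that $\underline\Psi$ is $\mathcal C^k$ in $x$ with values in $H^{n-k}$ for $k\le n-1$ — each $x$-derivative costs one $p$-derivative, hence the $n-k$. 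The main obstacle is the first one: the discrete part of the Marchenko kernel is genuinely not small, so the contraction cannot be run on the full operator, and one must do the finite-rank inversion and prove the uniform-in-$x$ invertibility of $M(x)$ via the sign-definiteness coming from $\mu_k<0$; everything after that is a (careful but routine) bootstrap.
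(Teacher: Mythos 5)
Your proposal follows essentially the same route as the paper: the paper also refuses to run the contraction on the full operator, instead subtracting the explicit $N$-soliton solution $\Upsilon$ (equivalently, inverting the discrete, finite-rank part exactly) and showing that $2\sqrt{2}\,I_d+\Omega_x$ is coercive uniformly in $x$ thanks to the sign condition $\mu_k<0$ — your ``sign-definite Gram structure'' — so that the continuous part, whose operator norm is controlled by $\|\lambda^n\beta\|_{L^\infty}$ via exactly the Plancherel/Fourier-multiplier bound you invoke, yields a contraction in $(H^n_y(y\geq x))^2$. The regularity in $x$ is likewise obtained in the paper by passing to $\underline{\Psi}(x,p)=\Psi(x,x+p)$ and trading $x$-derivatives of the translated kernel for derivatives of $\widehat{T}$ (formalized there through the implicit function theorem), matching your $\mathcal{C}^k(\mathbb{R},(H^{n-k}(\mathbb{R}^+))^2)$ bookkeeping.
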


The well-posedness in the previous proposition are obtained by perturbation of the case $\beta \equiv 0$, which
corresponds to the exact multi-solitons solutions, and for which the result is completely of algebraic nature.

We refer the reader to \cite{Ger-Zng} and \cite{th_scat} for more details about the derivation of 
Marchenko system via the scattering theory associated to (GP). 
We focus in this paper on the other direction where 
the link with the Gross-Pitaevskii equation is obtained by the following proposition which introduces an  additional time parameter
$t$ in Marchenko system:

\begin{proposition}\label{wp_Marchenko_2}
Assume that    
$$\beta(t,\lambda)=c(\lambda)\exp(-4i\lambda\xi t),\quad \mu_k(t)=\mu^0_k\exp(4\lambda_k\nu_kt),\quad 1\leq k\leq N,$$
where the $\mu^0_k$ are assumed to be strictly negative and the function $c$ satisfies assumptions  \eqref{hyp_beta_intro} above.  
Then if the quantity  $\|\lambda^nc(.)\|_{L^\infty([\frac{1}{\sqrt{2}},+\infty[)}$ is small enough, 
for every $(t,x)\in \mathbb{R}^2$ the Marchenko system (with fixed parameters $x$ and $t$) has a unique solution 
$\Psi(t,x,.)\in (H^n_y(y\geq x))^2$ such that
$\underline{\Psi} \in \mathcal{C}^k(\mathbb{R}^2,(H^{n-2k}(\mathbb{R}^+))^2)$ for all $k\in \mathbb{N}$ with $ k<\frac{n}{2}.$ 
Moreover, if $n\geq 3$, the function 
$$u(t,x)=1+2\sqrt{2}i\underline{\overline{\Psi}}_{12}(t,x,0)$$
is solution of the Gross-Pitaevskii equation in  $\mathcal{C}^1(\mathbb{R}^2)$ such that $u(t,.) \in \mathcal{C}^2(\mathbb{R})$ for all $t\in \mathbb{R}.$ 
Finally, if the $\nu_k$ are pairwise distinct and $c$ is a real-valued function,  for $t\in \mathbb{R}$ fixed, we have
$$\lim_{x\rightarrow \pm\infty} (u(t,x)-u_N(t,x)) = 0,$$
where $u_N$ is the solution corresponding to $\beta \equiv 0$ (the other coefficients $\lambda_k$ and $\mu_k^0$ are left equal).  
\end{proposition}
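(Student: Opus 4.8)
The plan is to treat the time parameter $t$ exactly as an extra frozen parameter and reduce everything to Proposition \ref{wp_Marchenko_1}. First I would check that, with $\beta(t,\lambda)=c(\lambda)\exp(-4i\lambda\xi t)$ and $\mu_k(t)=\mu_k^0\exp(4\lambda_k\nu_k t)$, the hypotheses of Proposition \ref{wp_Marchenko_1} hold uniformly in $t$ on compact sets: since $|\exp(-4i\lambda\xi t)|=1$, one has $\|\lambda^n\beta(t,\cdot)\|_{L^\infty}=\|\lambda^n c(\cdot)\|_{L^\infty}$ and likewise for the $\lambda^{n+2}$-weighted norm, so the smallness condition is \emph{independent of $t$}; the $\mu_k(t)$ stay strictly negative for every $t$. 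Hence for each $(t,x)$ the system has a unique solution $\Psi(t,x,\cdot)\in(H^n_y(y\ge x))^2$. For the regularity in $(t,x)$ I would note that the exponential factors $e^{-4i\lambda\xi t}$ inside $F_{12},F_{22}$ and the $e^{4\lambda_k\nu_k t}$ in $F_{11},F_{21}$ are smooth in $t$, and that each $t$-derivative brings down a factor $\lambda\xi\sim\lambda^2$ (resp.\ a bounded constant); since the dispersion phase is quadratic in $\lambda$, one $\partial_t$ costs \emph{two} derivatives in $y$, which is exactly why the statement trades $n$ for $n-2k$. Concretely I would differentiate the fixed-point identity $\Psi=\mathcal{K}(t,x)\Psi+\text{(source)}$ with respect to $t$ and $x$, observe that $\partial_t\mathcal{K}$ and $\partial_x\mathcal{K}$ are bounded operators on the relevant spaces with the indicated loss, and invert $I-\mathcal{K}$ (whose inverse is uniformly bounded by the contraction estimate from Proposition \ref{wp_Marchenko_1}) to solve for the derivatives; iterating gives $\underline\Psi\in\mathcal C^k(\mathbb R^2,(H^{n-2k}(\mathbb R^+))^2)$ for $k<n/2$.

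Next, for $n\ge 3$, I would verify that $u(t,x)=1+2\sqrt 2\,i\,\underline{\overline\Psi}_{12}(t,x,0)$ solves \eqref{GP_intro}. The evaluation at $p=0$ makes sense because $\underline\Psi(t,x,\cdot)\in(H^{n-2}(\mathbb R^+))^2\hookrightarrow\mathcal C^0$ once $n-2\ge 1$, and the $\mathcal C^1_{t}$, $\mathcal C^2_x$ regularity follows from the previous paragraph with $n\ge3$. The substance here is the standard inverse-scattering computation: one plugs the Marchenko kernel into (GP) and uses the Gelfand–Levitan–Marchenko identities together with the \emph{specific} time dependence $\beta(t,\lambda)=c(\lambda)e^{-4i\lambda\xi t}$, $\mu_k(t)=\mu_k^0 e^{4\lambda_k\nu_k t}$ — these are precisely the linear evolutions of the scattering data that linearize (GP) — to show that the nonlinear PDE collapses to a differentiated form of the Marchenko system, which holds by construction. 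I would carry this out by differentiating the system \eqref{marchenko1} in $t$ and twice in $y$ (legitimate in the stated spaces), combining the resulting relations, and reading off that $u$ satisfies $i\partial_t u+\partial_x^2u+(1-|u|^2)u=0$ pointwise. Reference to \cite{Ger-Zng,th_scat} handles the algebraic bookkeeping.

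Finally, for the asymptotics as $x\to\pm\infty$ with $t$ fixed, $c$ real-valued and the $\nu_k$ pairwise distinct, the idea is that the continuous-spectrum contributions $F_{12},F_{22}$ are Fourier transforms of $L^1\cap L^2$ functions, hence tend to $0$ as their argument $\to\pm\infty$ (Riemann–Lebesgue), together with their derivatives up to the order controlled by \eqref{hyp_beta_intro}; meanwhile $F_{11},F_{21}$ decay exponentially as $z\to+\infty$. Thus as $x\to+\infty$ the full kernel of \eqref{marchenko1} converges (in the appropriate operator norm on $(H^n_y(y\ge x))^2$, after the shift $y\mapsto x+p$) to that of the reflectionless system $\beta\equiv0$, and by the uniform invertibility of $I-\mathcal K$ the solutions converge, whence $u(t,x)-u_N(t,x)\to0$; the case $x\to-\infty$ is symmetric. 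The main obstacle I anticipate is the second paragraph — making the formal inverse-scattering verification that $u$ solves (GP) fully rigorous in the low-regularity setting $n=3$, i.e.\ justifying all the $t$- and $y$-differentiations of the integral equations and the passage to the pointwise PDE; the first and third paragraphs are, by contrast, soft perturbation/continuity arguments built directly on Proposition \ref{wp_Marchenko_1}.
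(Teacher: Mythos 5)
Your first paragraph is in line with the paper: the smallness condition is indeed $t$-independent since $|e^{-4i\lambda\xi t}|=1$, the fixed point of $(2\sqrt{2}I_d+\Omega_x)^{-1}\mathcal{F}$ gives the solution for each $(t,x)$, and the $\mathcal{C}^k(\mathbb{R}^2,(H^{n-2k}(\mathbb{R}^+))^2)$ regularity is obtained exactly by the two-derivatives-per-$\partial_t$ count you describe (the paper runs this through Lemma \ref{op_cont} and the implicit function theorem applied to $\mathcal{K}(t,x,\Phi)=\Phi-\mathcal{M}_x(t)\Phi-R_x$, which is equivalent to differentiating the fixed-point identity). The gap is in the two remaining parts. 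For the verification that $u$ solves (GP), you defer the core of the argument: the paper does not "plug $u$ into (GP)"; it first proves (Lemma \ref{wp_lemme}) that the matrix kernel $\Psi$ satisfies the linear system $iM\partial_x\Psi+i\partial_y\Psi M-\Psi Q_0+Q\Psi=0$ by showing the defect $C$ solves the homogeneous Marchenko system and vanishes by uniqueness, then proves the time law $\partial_t\chi-B\chi=i\sqrt{3}(\frac{E}{2}-\xi)^2\chi$ by showing $\partial_t\Psi$ and $-G$ satisfy the \emph{same} inhomogeneous Marchenko equations (via $\partial_tF_1=-2\partial_zF_2+2\partial_z^3F_2$, $\partial_tF_2=-4\partial_zF_1$) and invoking uniqueness again, and finally uses the Lax compatibility to get $(i\partial_tu+\partial_x^2u+(1-|u|^2)u)\bigl(1-\int_0^{+\infty}(\underline{\Psi}_1+\sqrt{2}(\lambda-\xi)\underline{\Psi}_2)e^{-i\xi p}dp\bigr)=0$, after which one must still show the second factor is nonzero for a suitable spectral parameter (the paper sends a real parameter $s\to+\infty$ so that the integral term vanishes). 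None of these steps, in particular the uniqueness-based identifications and the non-vanishing of the prefactor, are supplied by your sketch or by an appeal to the references.

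The more serious flaw is the claim that the limit $x\to-\infty$ is "symmetric" to $x\to+\infty$. It is not. As $x\to+\infty$ all kernels vanish ($e^{-\nu_k(2x+s+p)}\to 0$, $\|\widehat{T}(2x+p+\cdot)\|_{L^2(\mathbb{R}^+)}\to 0$, $F_x\to 0$), so $\underline{\Psi}^r(t,x,\cdot)\to 0$ by a soft argument, as you say. But as $x\to-\infty$ the soliton kernels grow like $e^{-2\nu_kx}$ and $\|\widehat{T}(2x+p+\cdot)\|_{L^2(\mathbb{R}^+)}$ increases to $\|\widehat{T}\|_{L^2(\mathbb{R})}$, so the kernel of the perturbed system does \emph{not} converge to the reflectionless kernel in any relevant operator norm, and "uniform invertibility therefore the solutions converge" proves nothing. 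This is where the paper needs its longest argument: a uniform $H^n$ bound on $\underline{\Psi}^r$ (resting on Lemma \ref{Upsilon_born}, which is where pairwise distinctness of the $\nu_k$ is used), extraction of weak and $L^2_{loc}$ limits $\Psi_\ast$, $\Psi_\ast^1$ along sequences $x_k\to-\infty$, identification of a coupled limit system, the orthogonality of $\Psi_\ast$ to $\mathrm{vect}(e^{-\nu_1p},\dots,e^{-\nu_Np})$, and an $L^2$-norm balance that uses the Hermitian symmetry of $\widehat{T}$ — which is precisely where the hypothesis that $c$ is real-valued enters. The fact that your argument never uses the reality of $c$ nor the distinctness of the $\nu_k$ is a clear sign that, as written, it does not meet the actual difficulty of this part of the statement.
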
 

The solutions mentioned in Theorem \ref{GP_copm} are exactly the above solutions $u_N$. The link between 
the speeds $c_k$ of Theorem \ref{GP_copm} and the coefficients $\lambda_k$ of Marchenko system is given by the relationships $c_k = 2\lambda_k.$

\section{$N$-soliton solutions}\label{sec_N_solit}
The purpose of this section is the resolution of Marchenko system when $\beta(\lambda)=0.$  We set 
\begin{enumerate}
 \item $N$ real numbers $-\frac{1}{\sqrt{2}} <\lambda_1<\lambda_2 <...<\lambda_N <\frac{1}{\sqrt{2}}$.
 \item $N$ negative real numbers $\mu_1,\mu_2,...,\mu_N$.
\end{enumerate}
In this case, the Marchenko system is rewritten as follows
\begin{eqnarray}
2\sqrt{2}\Upsilon_1(x,y)&=& -\sum_{k=1}^{N}\mu_ke^{-\nu_kx}e^{-\nu_ky}\notag\\
&& +\sum_{k=1}^{N}e^{-\nu_ky}\int_{x}^{+\infty}[\sqrt{2}\Upsilon_2(x,s)\mu_k(\lambda_k+i\nu_k)e^{-\nu_ks}+\Upsilon_1(x,s)\mu_ke^{-\nu_ks}]ds,\notag\\
\label{marchenko22}
\end{eqnarray}
\begin{eqnarray}
 2\sqrt{2}\Upsilon_2(x,y)&=& \sqrt{2}\sum_{k=1}^{N}\mu_k(-\lambda_k+i\nu_k)e^{-\nu_kx}e^{-\nu_ky}\notag\\
&& +\sum_{k=1}^{N}e^{-\nu_ky}\int_{x}^{+\infty}[\sqrt{2}\Upsilon_1(x,s)\mu_k(\lambda_k-i\nu_k)e^{-\nu_ks}+\Upsilon_2(x,s)\mu_ke^{-\nu_ks}]ds.\notag\\
\label{marchenko33}
\end{eqnarray}
This means that  $\Upsilon_1$ and $ \Upsilon_2$ take the forms
$$\Upsilon_1(x,y)=\sum_{j=1}^{N}f_j(x)e^{-\nu_jy},\quad\Upsilon_2(x,y)=\sum_{j=1}^{N}g_j(x)e^{-\nu_jy}.$$
Denote $\alpha_k=\mu_k(\lambda_k-i\nu_k)$, hence 
\begin{eqnarray*}
\int_{x}^{+\infty}\Upsilon_2(x,s)e^{-\nu_ks}ds &=& \int_{x}^{+\infty}\sum_{j=1}^{N}g_j(x)e^{-(\nu_j+\nu_k)s}ds  \\
&=& \sum_{j=1}^{N}g_j(x)\frac{e^{-(\nu_j+\nu_k)x}}{\nu_j+\nu_k}.
\end{eqnarray*}
Similarly, we have $$\int_{x}^{+\infty}\Upsilon_1(x,s)e^{-\nu_ks}ds=\sum_{j=1}^{N}f_j(x)\frac{e^{-(\nu_j+\nu_k)x}}{\nu_j+\nu_k},$$ and the two
equations (\ref{marchenko22}) and (\ref{marchenko33}) become
\begin{eqnarray*}
2\sqrt{2}\Upsilon_1(x,y)&=& -\sum_{k=1}^{N}\mu_ke^{-\nu_kx}e^{-\nu_ky} \\
&& +\sum_{k=1}^{N}e^{-\nu_ky}\sum_{j=1}^{N}[\sqrt{2}\bar{\alpha}_kg_j(x)+\mu_kf_j(x)]\frac{e^{-(\nu_j+\nu_k)x}}{\nu_j+\nu_k},
\end{eqnarray*}
\begin{eqnarray*}
2\sqrt{2}\Upsilon_2(x,y)&=& -\sqrt{2}\sum_{k=1}^{N}\alpha_ke^{-\nu_kx}e^{-\nu_ky} \\
&& +\sum_{k=1}^{N}e^{-\nu_ky}\sum_{j=1}^{N}[\sqrt{2}\alpha_kf_j(x)+\mu_kg_j(x)]\frac{e^{-(\nu_j+\nu_k)x}}{\nu_j+\nu_k}.
\end{eqnarray*}
Identifying the terms with factors $\exp(-\nu_ky)$, we get 
\begin{equation}
\label{marchenko222}
2\sqrt{2}f_k(x)-\sum_{j=1}^{N}[\mu_kf_j(x)+\sqrt{2}\bar{\alpha}_kg_j(x)]\frac{e^{-(\nu_j+\nu_k)x}}{\nu_j+\nu_k}=-\mu_ke^{-\nu_kx},
\quad k\in \{1,2,...,N\}.
\end{equation}

\begin{equation}
\label{marchenko333}
2\sqrt{2}g_k(x)-\sum_{j=1}^{N}[\sqrt{2}\alpha_kf_j(x)+\mu_kg_j(x)]\frac{e^{-(\nu_j+\nu_k)x}}{\nu_j+\nu_k}=-\sqrt{2}\alpha_ke^{-\nu_kx},
\quad k\in \{1,2,...,N\}
\end{equation}
Assuming $x$ as a parameter, the $2N$  equations (\ref{marchenko222}) and (\ref{marchenko333}) correspond to linear system with 
$2N$ equations and $2N$ unknowns. To analyze its resolution, we set 

$$  T=\frac{1}{2\sqrt{2}}
\left( \begin{matrix}
A&\overline{B}\\
B&A\\
\end{matrix} \right ),
$$
$$C=\frac{1}{2\sqrt{2}} \left( \begin{matrix}
C^1\\
C^2\\
 \end{matrix} \right )
,\quad F=
\left( \begin{matrix}
F^1\\
F^2\\
\end{matrix} \right ),
$$
with $$\forall i,j \in \{1,2,..,N\}\quad (A)_{ij}=-\mu_i\frac{e^{-(\mu_i+\mu_j)x}}{\mu_i+\mu_j}\quad \forall i,j \in \{1,2,..,N\},$$
$$ (B)_{ij}=-\sqrt{2}\alpha_i\frac{e^{-(\mu_i+\mu_j)x}}{\mu_i+\mu_j}\triangleq-\beta_i\frac{e^{-(\mu_i+\mu_j)x}}{\mu_i+\mu_j}
\quad \forall i,j \in \{1,2,..,N\},$$
$$(C^1)_k=-\mu_ke^{-\nu_k x},\quad (C^2)_k=-\sqrt{2}\alpha_ke^{-\nu_k x}\quad \forall k\in \{1,2,..,N\},$$
$$(F^1)_k=f_k,\quad (F^2)_k=g_k\quad \forall k\in \{1,2,..,N\},$$

so that (\ref{marchenko222}) and (\ref{marchenko333}) can be rewritten in the matrix form: 
\begin{equation}\label{sys1}
(I+T)F=C.
\end{equation}

\begin{proposition}
The matrix $(I+T)(x)$ is invertible for every $x\in\mathbb{R}$.
\end{proposition}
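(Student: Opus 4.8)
The plan is to display $(I+T)(x)$ as $I$ minus a matrix whose spectrum is contained in $(-\infty,0]$, by factoring out a positive semidefinite Gram part. First I would record the integral representation
$$\frac{e^{-(\nu_i+\nu_j)x}}{\nu_i+\nu_j}=\int_x^{+\infty}e^{-\nu_i s}e^{-\nu_j s}\,ds=\langle\phi_i,\phi_j\rangle_{L^2(x,+\infty)},\qquad \phi_k(s):=e^{-\nu_k s},$$
which makes sense since each $\nu_k=\sqrt{1/2-\lambda_k^2}>0$ puts $\phi_k\in L^2(x,+\infty)$. Hence the matrix $G$ with entries $G_{ij}=e^{-(\nu_i+\nu_j)x}/(\nu_i+\nu_j)$ is the Gram matrix of $\phi_1,\dots,\phi_N$, so it is real symmetric and positive semidefinite. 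Writing $D_\mu=\mathrm{diag}(\mu_1,\dots,\mu_N)$ and $D_\beta=\mathrm{diag}(\beta_1,\dots,\beta_N)$ (with $\beta_k=\sqrt2\,\alpha_k$ as above), the blocks become $A=-D_\mu G$, $B=-D_\beta G$, $\overline B=-\overline{D_\beta}\,G$, so that
$$T=-\frac{1}{2\sqrt2}\,\mathcal D\,\mathcal G,\qquad \mathcal D:=\begin{pmatrix}D_\mu & \overline{D_\beta}\\[1mm] D_\beta & D_\mu\end{pmatrix},\quad \mathcal G:=\begin{pmatrix}G&0\\[1mm]0&G\end{pmatrix}.$$

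Second, I would check that $\mathcal D$ is Hermitian and negative semidefinite. Hermiticity is immediate since $D_\mu$ is real diagonal and the two off-diagonal blocks are conjugate transposes of one another. After the permutation pairing the $k$-th and $(N+k)$-th coordinates, $\mathcal D$ becomes block diagonal with the $2\times2$ blocks $\mathcal D_k=\left(\begin{smallmatrix}\mu_k&\overline{\beta_k}\\\beta_k&\mu_k\end{smallmatrix}\right)$, whose eigenvalues are $\mu_k\pm|\beta_k|$. Now $\beta_k=\sqrt2\,\mu_k(\lambda_k-i\nu_k)$ and, by the very definition of $\nu_k$, $|\lambda_k-i\nu_k|^2=\lambda_k^2+\nu_k^2=\tfrac12$; therefore $|\beta_k|=\sqrt2\,|\mu_k|\cdot\tfrac1{\sqrt2}=|\mu_k|=-\mu_k$ because $\mu_k<0$. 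Hence the eigenvalues of $\mathcal D_k$ are $2\mu_k<0$ and $0$, so $\mathcal D\preceq0$.

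Finally, since $\mathcal G\succeq0$ I would pass to its square root $\mathcal G^{1/2}$ and use that $\mathcal D\mathcal G=(\mathcal D\mathcal G^{1/2})\,\mathcal G^{1/2}$ and $\mathcal G^{1/2}\mathcal D\mathcal G^{1/2}=\mathcal G^{1/2}\,(\mathcal D\mathcal G^{1/2})$ share the same nonzero spectrum; the latter matrix is a congruence transform of $\mathcal D$, hence Hermitian and $\preceq0$, so every eigenvalue of $\mathcal D\mathcal G$ is real and $\le0$. Consequently $2\sqrt2$ is not an eigenvalue of $\mathcal D\mathcal G$, i.e. $I+T=I-\tfrac1{2\sqrt2}\mathcal D\mathcal G$ has trivial kernel and is invertible (in fact its spectrum lies in $[1,+\infty)$). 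The only point requiring a little care is that $G$ is merely positive \emph{semi}definite — the $\nu_k$ need not be distinct even though the $\lambda_k$ are — which is precisely why I route the positivity through $\mathcal G^{1/2}\mathcal D\mathcal G^{1/2}$ together with the $PQ\leftrightarrow QP$ spectral identity rather than conjugating $I+T$ directly by $\mathcal G^{1/2}$; everything else is the bookkeeping of the block decomposition and the identity $\lambda_k^2+\nu_k^2=\tfrac12$.
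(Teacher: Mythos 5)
Your proof is correct, and it takes a genuinely different route from the paper's. The factorization $T=-\tfrac1{2\sqrt2}\,\mathcal D\,\mathcal G$, the computation $|\beta_k|=\sqrt2\,|\mu_k|\,|\lambda_k-i\nu_k|=-\mu_k$ coming from $\lambda_k^2+\nu_k^2=\tfrac12$, and the passage through $\mathcal G^{1/2}\mathcal D\,\mathcal G^{1/2}$ together with the identity $\mathrm{Sp}(PQ)\setminus\{0\}=\mathrm{Sp}(QP)\setminus\{0\}$ to conclude $\mathrm{Sp}(\mathcal D\mathcal G)\subset(-\infty,0]$ are all sound, and your caution about $G$ being only positive \emph{semi}definite (the $\nu_k$ may coincide even though the $\lambda_k$ are distinct) is warranted. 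The paper instead conjugates $T$ by the unipotent block matrix $Q=\left(\begin{smallmatrix}I_N&0\\ D&I_N\end{smallmatrix}\right)$ with $D=\mathrm{diag}(-\beta_i/\mu_i)$, which, using $|\beta_i|^2=\mu_i^2$, annihilates the second block row and reduces the question to $\det(A-D\overline B+I_N)\neq0$; it then right-multiplies by $\hat D=-\mathrm{diag}(\mu_1,\dots,\mu_N)$ and checks, via the same integral (Gram-type) representation of the kernel $e^{-(\nu_i+\nu_j)x}/(\nu_i+\nu_j)$, that $(A-D\overline B+I_N)\hat D$ is Hermitian positive definite, the strict positivity being supplied by the extra term $-\sum_i\mu_i|z_i|^2$. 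Both arguments ultimately rest on the same two ingredients (positivity of the Gram kernel and $|\beta_k|=|\mu_k|$), but yours stays at the $2N\times2N$ level and yields the sharper statement $\mathrm{Sp}(I+T)\subset[1,+\infty)$, a finite-dimensional analogue of the coercivity \eqref{Op_Gamma_sp} used later for $2\sqrt2 I_d+\Omega_x$, whereas the paper's triangularization produces the reduced matrix $E=A-D\overline B$ explicitly, which is then reused in Section \ref{sect:scatasympt} for the long-time asymptotics, so its extra bookkeeping is not wasted.
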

\begin{proof}
The idea of this proof is to demonstrate that $-1$ is not an eigenvalue for $T$. To this end, define 
$$  Q=
\left( \begin{matrix}
I_N&0\\
D&I_N\\
\end{matrix} \right ),
$$
with $D \in \mathcal{M}_N(\mathbb{C})$  is a diagonal matrix defined as follows $(D)_{ii}=-\frac{\beta_i}{\mu_i}\quad \forall i\in \{1,...,N\}.$
Using the fact that $|\beta_i|^2=\mu_i^2$, we get 
$$  QTQ^{-1}=\frac{1}{2\sqrt{2}}
\left( \begin{matrix}
A-D\overline{B}&\overline{B}\\
0&0\\
\end{matrix} \right ),
$$
hence we have the following equivalence 
 $$\neg(-1\in Sp(T))\Leftrightarrow \neg(-1\in Sp(A-D\overline{B}))\Leftrightarrow\det(A-D\overline{B}+I_N)(x)\neq 0,\quad \forall x\in \mathbb{R},$$
where 
$$  A-D\overline{B} =
 -\begin{pmatrix}
(\mu_1+\bar{\beta}_1\frac{\beta_1}{\mu_1})\frac{e^{-2\nu_1x}}{2\nu_1}&(\mu_1+\bar{\beta}_1\frac{\beta_2}{\mu_2})\frac{e^{-(\nu_1+\nu_2)x}}{\nu_1+\nu_2}&...&(\mu_1+\bar{\beta}_1\frac{\beta_N}{\mu_N})\frac{e^{-(\nu_1+\nu_N)x}}{\nu_1+\nu_N}\\
(\mu_2+\bar{\beta}_2\frac{\beta_1}{\mu_1})\frac{e^{-(\nu_1+\nu_2)x}}{\nu_1+\nu_2}&(\mu_2+\bar{\beta}_2\frac{\beta_2}{\mu_2})\frac{e^{-2\nu_2x}}{2\nu_2}&...&(\mu_2+\bar{\beta}_2\frac{\beta_N}{\mu_N})\frac{e^{-(\nu_2+\nu_N)x}}{\nu_2+\nu_N}\\
...&...& &...\\
(\mu_N+\bar{\beta}_N\frac{\beta_1}{\mu_1})\frac{e^{-(\nu_1+\nu_N)x}}{\nu_1+\nu_N}&(\mu_N+\bar{\beta}_N\frac{\beta_2}{\mu_2})\frac{e^{-(\nu_2+\nu_N)x}}{\nu_1+\nu_2}&...&(\mu_N+\bar{\beta}_N\frac{\beta_N}{\mu_N})\frac{e^{-2\nu_Nx}}{\nu_N}
\end{pmatrix}.
$$
Denote now $\hat{D}=-diag(\mu_1,\mu_2,...,\mu_N)$. Then  we have $$\det(A-D\overline{B}+I_N)\neq 0\Leftrightarrow\det((A-D\overline{B}+I_N)\hat{D})\neq 0,$$
since $\det(\hat{D})=\prod_{k=1}^{N}(-\mu_k) >0$, and 

$$  (A-D\overline{B})\hat{D} =
 \begin{pmatrix}
(\mu_1^2+\bar{\beta}_1\beta_1)\frac{e^{-2\nu_1x}}{2\nu_1}&(\mu_1\mu_2+\bar{\beta}_1\beta_2)\frac{e^{-(\nu_1+\nu_2)x}}{\nu_1+\nu_2}&...&(\mu_1\mu_N+\bar{\beta}_1\beta_N)\frac{e^{-(\nu_1+\nu_N)x}}{\nu_1+\nu_N}\\
(\mu_2\mu_1+\bar{\beta}_2\beta_1)\frac{e^{-(\nu_1+\nu_2)x}}{\nu_1+\nu_2}&(\mu_2^2+\bar{\beta}_2\beta_2)\frac{e^{-2\nu_2x}}{2\nu_2}&...&(\mu_2\mu_N+\bar{\beta}_2\beta_N)\frac{e^{-(\nu_2+\nu_N)x}}{\nu_2+\nu_N}\\
...&...& &...\\
(\mu_N\mu_1+\bar{\beta}_N\beta_1)\frac{e^{-(\nu_1+\nu_N)x}}{\nu_1+\nu_N}&(\mu_N\mu_2+\bar{\beta}_N\beta_2)\frac{e^{-(\nu_2+\nu_N)x}}{\nu_1+\nu_2}&...&(\mu_N^2+\bar{\beta}_N\beta_N)\frac{e^{-2\nu_Nx}}{\nu_N}
\end{pmatrix}.
$$
It is clear that the matrix $(A-D\overline{B})\hat{D}$ is Hermitian and that 
$$ \left((A-D\overline{B})\hat{D}\right)_{ij}=(\mu_i\mu_j+\bar{\beta}_i\beta_j)\int_{-\infty}^{-x}e^{(\nu_i+\nu_j)t}dt\quad \forall i, j \in \{1,2,..,N\}.$$
Let now $Z\in \mathbb{C}^N\backslash\{0\}$. Then 
\begin{eqnarray*}
<(A-D\overline{B}+I_N)\hat{D}Z,Z>&=&<(A-D\overline{B})\hat{D}Z,Z>+<\hat{D}Z,Z>\\
&& \int_{-\infty}^{-x}|\sum_{i=1}^{N}\mu_iz_ie^{\nu_it}|^2dt+\int_{-\infty}^{-x}|\sum_{i=1}^{N}\beta_iz_ie^{\nu_it}|^2dt-\sum_{i=1}^{N}\mu_i|z_i|^2 \\
&& > 0, \\
\end{eqnarray*}
which means that the matrix $(A-D\overline{B}+I_N)\hat{D}$ is positive definite, hence $\det(A-D\overline{B}+I_N)\hat{D}\neq 0$.  Thus, the proof is completed.
\end{proof}

\section{Resolution of Marchenko system: proofs of Propositions \ref{wp_Marchenko_1} and \ref{wp_Marchenko_2}}
 
\subsection{The case with no time dependence}
 
The solution corresponding to $\beta \neq 0$ will be obtained by perturbation of the $N$-soliton solution constructed 
in the previous section.
Denote $\Upsilon = \left(\begin{matrix}\Upsilon_1\\ \Upsilon_2\end{matrix}\right)$ the solution of Marchenko system
\eqref{marchenko1} corresponding to $\beta \equiv 0$ and the same values of the other scattering coefficients, 
i.e. $\lambda_i$ and $\mu_i.$ Define 
$$\Psi^r_1 = \Psi_1-\Upsilon_1, \quad \Psi^r_2 = \Psi_2-\Upsilon_2.$$
The Marchenko system \eqref{marchenko1} is thus transformed into a system for 
$\Psi^r= \left(\begin{array}{lr} \Psi^r_1\\ \Psi^r_2\end{array}\right):$ 
\begin{equation}\label{wp}
 (2\sqrt{2}I_d+\Omega_x)\Psi^r(x,y) =  \mathcal{F}\Psi^r(x,y),
\end{equation}
where the two components $(\Omega_x\Psi^r)_1(x,.)$ and $(\Omega_x\Psi^r)_2(x,.)$ of $\Omega_x\Psi^r(x,y)$ are given by
$$ (\Omega_x\Psi^r)_1(x,y) =-\int_x^{+\infty}\sqrt{2}\Psi^r_2(x,s)(F_{11}-iF'_{21})(s+y)ds -\int_x^{+\infty}\Psi^r_1(x,s)F_{21}(s+y)ds,$$
$$(\Omega_x\Psi^r)_2(x,y)= -\int_x^{+\infty}\sqrt{2}\Psi^r_1(x,s)(F_1+iF'_{21})(s+y)ds -\int_x^{+\infty}\Psi^r_2(x,s)F_{21}(s+y)ds,$$
and the two components $(\mathcal{F}\Psi^r)_1(x,.)$ and $(\mathcal{F}\Psi^r)_2(x,.)$ of $\mathcal{F}\Psi^r(x,.)$ are given by
\begin{eqnarray}\label{wp3}
 (\mathcal{F}\Psi^r)_1(x,y)&=& -\int_x^{+\infty}\sqrt{2}(\Upsilon_2+\Psi^r_2)(x,s)(F_{12}-iF'_{22})(s+y)ds\notag\\
 && + \int_x^{+\infty}(\Upsilon_1+\Psi^r_1)(x,s)F_{22}(s+y)ds + F_{22}(x+y),
\end{eqnarray}
\begin{eqnarray}\label{wp4}
(\mathcal{F}\Psi^r)_2(x,y)&=& -\int_x^{+\infty}\sqrt{2}(\Upsilon_1+\Psi^r_1)(x,s)(F_{12}+iF'_{22})(s+y)ds\notag\\
 && + \int_x^{+\infty}(\Upsilon_2+\Psi^r_2)(x,s)F_{22}(s+y)ds + \sqrt{2}(F_{12}+iF'_{22})(x+y).
\end{eqnarray}
The operator $\Omega_x\in \mathcal{L}(L^2_y(y\geq x))$ satisfies the following property: 
\begin{lemma}\label{op_Omega}
 The operator $\Omega_x$ is positive independently of $x$. 
 Namely, 
 $$\int_x^{+\infty}\Omega_x\Phi\cdot\bar{\Phi} \geq 0, \qquad \forall \Phi \in (L^2_y(y\geq x))^2.$$ 
\end{lemma}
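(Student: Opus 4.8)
The plan is to evaluate the Hermitian form $\Phi\mapsto\int_x^{+\infty}\Omega_x\Phi\cdot\bar\Phi$ in closed form and exhibit it as a sum of manifestly nonnegative terms. The point is that, when $\beta\equiv 0$, the kernel of $\Omega_x$ is built only from the finite exponential sums $F_{11},F_{21}$, so the integral operator has rank at most $2N$ and its quadratic form is a genuinely finite‑dimensional object.

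First I would make the kernel explicit. Since $F_{21}(z)=\sum_k\mu_ke^{-\nu_kz}$ one has $F_{21}'(z)=-\sum_k\mu_k\nu_ke^{-\nu_kz}$, hence $(F_{11}\mp iF_{21}')(z)=\sum_k\mu_k(\lambda_k\pm i\nu_k)e^{-\nu_kz}$, i.e. $\sum_k\bar\alpha_ke^{-\nu_kz}$ and $\sum_k\alpha_ke^{-\nu_kz}$ respectively, with $\alpha_k=\mu_k(\lambda_k-i\nu_k)$ as in Section~\ref{sec_N_solit}. (I read the ``$F_1$'' in the second component of the displayed definition of $\Omega_x$ as $F_{11}$; with that reading the kernel depends only on $s+y$ and is Hermitian, which is what makes ``positive'' meaningful, and the computation below goes through verbatim.) Consequently each component of $\Omega_x\Phi$ is a finite combination $\sum_k e^{-\nu_ky}(\dots)$ whose coefficients are the linear functionals $p_k:=\int_x^{+\infty}\Phi_1(s)e^{-\nu_ks}\,ds$ and $q_k:=\int_x^{+\infty}\Phi_2(s)e^{-\nu_ks}\,ds$ of $\Phi$; these are finite since $\nu_k>0$ and, by Cauchy--Schwarz, $|p_k|,|q_k|\le\|\Phi_i\|_{L^2(x,+\infty)}\,\|e^{-\nu_k\cdot}\|_{L^2(x,+\infty)}$.

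Next, substituting these expressions into $\int_x^{+\infty}\Omega_x\Phi\cdot\bar\Phi\,dy$ and interchanging the two integrations (legitimate by the same estimate, together with the finiteness of the sum over $k$), every double integral collapses onto the $p_k,q_k$, and since the two cross contributions turn out to be complex conjugates of one another one gets
$$\int_x^{+\infty}\Omega_x\Phi\cdot\bar\Phi\,dy=-\sum_{k=1}^{N}\Big[\mu_k\big(|p_k|^2+|q_k|^2\big)+2\,\mathrm{Re}\big(\sqrt2\,\alpha_k\,p_k\bar q_k\big)\Big].$$
Then I would invoke the arithmetic identity already used in Section~\ref{sec_N_solit}: since $\lambda_k^2+\nu_k^2=\tfrac12$, one has $|\sqrt2\,\alpha_k|=\sqrt2\,|\mu_k|\sqrt{\lambda_k^2+\nu_k^2}=|\mu_k|=-\mu_k$ (equivalently $|\beta_k|^2=\mu_k^2$). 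Writing $\sqrt2\,\alpha_k=-\mu_k\,\omega_k$ with $|\omega_k|=1$, the $k$-th bracket becomes $\mu_k\big(|p_k|^2+|q_k|^2-2\,\mathrm{Re}(\omega_kp_k\bar q_k)\big)=\mu_k\,|p_k-\bar\omega_kq_k|^2\le 0$ because $\mu_k<0$; hence $\int_x^{+\infty}\Omega_x\Phi\cdot\bar\Phi\,dy=\sum_{k=1}^N(-\mu_k)\,|p_k-\bar\omega_kq_k|^2\ge 0$. Since the constant $|\sqrt2\,\alpha_k|=-\mu_k$ carries no $x$‑dependence (only $p_k,q_k$ do), this bound is uniform in $x$.

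I do not expect a genuine obstacle: the mechanism is simply the reduction of the integral kernel to a rank‑$\le 2N$ quadratic form plus the elementary identity $|\alpha_k|^2=\tfrac12\mu_k^2$. The only points requiring care are the Fubini interchange — handled by the Cauchy--Schwarz bound on $p_k,q_k$ and the finiteness of the sum — and the bookkeeping of complex conjugates in the cross terms so that they really assemble into $2\,\mathrm{Re}(\cdot)$ (one could equivalently avoid completing the square and just use $2|p_k||q_k|\le|p_k|^2+|q_k|^2$ together with $|\sqrt2\alpha_k|=-\mu_k$).
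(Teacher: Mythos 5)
Your proof is correct and follows essentially the same route as the paper: reduce the quadratic form to the finite-rank expression in the functionals $p_k=\int_x^{+\infty}e^{-\nu_k s}\Phi_1(s)\,ds$, $q_k=\int_x^{+\infty}e^{-\nu_k s}\Phi_2(s)\,ds$ and exploit $\lambda_k^2+\nu_k^2=\tfrac12$ together with $\mu_k<0$. The only cosmetic difference is that you complete the square to obtain an exact sum of nonnegative terms, whereas the paper bounds the cross terms by $2|p_k||q_k|\le|p_k|^2+|q_k|^2$ --- the alternative you yourself note as equivalent.
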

\begin{proof}
 In view of the definition of $F_{11}$ and $F_{21},$ we have  
 \begin{eqnarray}\label{op_Omega1}
 \int_x^{+\infty}(\Omega_x\Phi)_1\bar{\Phi}_1 &=& -\sum_{k=1}^{N}\mu_k|\int_x^{+\infty}e^{-\nu_ky}\Phi_1(y)dy|^2 \notag\\ 
 &&- \sqrt{2}\sum_{k=1}^{N}\mu_k(\lambda_k+i\nu_k) \int_x^{+\infty}e^{-\nu_ks}\Phi_2(s)ds \int_x^{+\infty}e^{-\nu_ky}\bar{\Phi}_1(y)ds,\notag\\
 \end{eqnarray}
and
 \begin{eqnarray}\label{op_Omega2}
 \int_x^{+\infty}(\Omega_x\Phi)_2\bar{\Phi}_2 &=& -\sum_{k=1}^{N}\mu_k|\int_x^{+\infty}e^{-\nu_ky}\Phi_2(y)dy|^2 \notag\\ 
 &&-\sqrt{2}\sum_{k=1}^{N}\mu_k(\lambda_k-i\nu_k) \int_x^{+\infty}e^{-\nu_ks}\Phi_1(s)ds \int_x^{+\infty}e^{-\nu_ky}\bar{\Phi}_2(y)ds.\notag\\
 \end{eqnarray}
 Combining (\ref{op_Omega1}) and (\ref{op_Omega2}), we obtain 
 \begin{eqnarray}\label{op_Omega3}
 -\int_x^{+\infty}\Omega_x\Phi\cdot\bar{\Phi} &=& \sum_{k=1}^{N}\mu_k|\int_x^{+\infty}e^{-\nu_ky}\Phi_1(y)dy|^2
 + \sum_{k=1}^{N}\mu_k|\int_x^{+\infty}e^{-\nu_ky}\Phi_2(y)dy|^2 \notag\\
 && +2\sqrt{2}\sum_{k=1}^{N}\mu_k\lambda_k Re\left( \int_x^{+\infty}e^{-\nu_ks}\Phi_2(s)ds \int_x^{+\infty}e^{-\nu_ky}\bar{\Phi}_1(y)ds\right)\notag\\
 && -  2\sqrt{2}\sum_{k=1}^{N}\mu_k\nu_k Im\left( \int_x^{+\infty}e^{-\nu_ks}\Phi_2(s)ds \int_x^{+\infty}e^{-\nu_ky}\bar{\Phi}_1(y)ds\right).
 \end{eqnarray}
Since $\mu_k$ are assumed negative,  the sum of the last two terms of the right-hand side of (\ref{op_Omega3}) is upper-bounded by 
 \begin{eqnarray*}
-2\sqrt{2}\sum_{k=1}^{N}\mu_k\sqrt{\lambda^2_k+\nu^2_k}\left|\int_x^{+\infty}e^{-\nu_ks}\Phi_1(s)ds 
\int_x^{+\infty}e^{-\nu_ky}\bar{\Phi}_2(y)dy\right|\\
\leq-\sum_{k=1}^{N}\mu_k\left(|\int_x^{+\infty}e^{-\nu_ky}\Phi_1(y)dy|^2 +
|\int_x^{+\infty}e^{-\nu_ky}\Phi_2(y)dy|^2\right). 
 \end{eqnarray*}
Then 

$$\int_x^{+\infty}\Omega_x\Phi\cdot\bar{\Phi} \geq 0.$$
\end{proof}

The operator $2\sqrt{2}I_d+\Omega_x\in \mathcal{L}(L^2_y(y\geq x))$ is coercive independently of $x.$ Namely, we have 
\begin{equation}\label{Op_Gamma_sp}
\int_x^{+\infty}(2\sqrt{2}I_d +\Omega_x)\Phi\cdot\bar{\Phi} \geq 2\sqrt{2}\|\Phi\|_{L^2}, \quad \forall \Phi \in (L^2_y(y\geq x))^2,\quad \forall x\in \mathbb{R},
\end{equation}
which implies that it is an isomorphism on $(L^2_y(y\geq x))^2$ and that $|\!|\!|(2\sqrt{2}I_d+\Omega_x)^{-1}|\!|\!|_{\mathcal{L}(L^2)} \leq \frac{1}{2\sqrt{2}}.$ 
This allows us to establish the following corollary:
\begin{corollary}\label{Omega_iso}
For every $k\in \mathbb{N},$ the operator $2\sqrt{2}I_d+\Omega_x$ is an isomorphism of\\ $\mathcal{L}(H^k_y(y\geq x)).$ Moreover, we have
$$|\!|\!|(2\sqrt{2}I_d+\Omega_x)^{-1}|\!|\!|_{\mathcal{L}(H^k)} \leq \frac{1}{2\sqrt{2}}.$$
\end{corollary}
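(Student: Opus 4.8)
The plan is to rerun the two ingredients behind the $L^2$ statement — that $\Omega_x$ is smoothing and that $2\sqrt2 I_d+\Omega_x$ is coercive — in the Sobolev scale. First I would observe that $\Omega_x$ maps $(L^2_y(y\geq x))^2$ into $(H^m_y(y\geq x))^2$ for every $m$: inspecting \eqref{op_Omega1}--\eqref{op_Omega2}, each component of $\Omega_x\Phi$ is a fixed finite sum $\sum_{l=1}^{N}\gamma_l(\Phi)\,e^{-\nu_l y}$ whose coefficients $\gamma_l$ are bounded linear functionals of $\Phi$ on $(L^2_y(y\geq x))^2$ (with norms depending only on $N$, the $\lambda_i$, the $\mu_i$ and $x$), while the functions $y\mapsto e^{-\nu_l y}$ lie in every $H^m_y(y\geq x)$ because $\nu_l>0$. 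In particular $\Omega_x\in\mathcal L(H^k_y(y\geq x))$, hence $2\sqrt2 I_d+\Omega_x\in\mathcal L(H^k_y(y\geq x))$, and $\Omega_x$ is a finite–rank positive operator whose range is spanned by these exponentials.

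Next I would establish bijectivity on $H^k_y(y\geq x)$. Injectivity is inherited from the injectivity on $(L^2_y(y\geq x))^2$ given by \eqref{Op_Gamma_sp}, since $H^k\subset L^2$. For surjectivity, given $G\in H^k_y(y\geq x)$ take the $L^2$-solution $\Psi$ of $(2\sqrt2 I_d+\Omega_x)\Psi=G$; rewriting $\Psi=\tfrac1{2\sqrt2}(G-\Omega_x\Psi)$ and using that $\Omega_x\Psi$ is, by the first step, a finite combination of the $e^{-\nu_l y}$ and therefore lies in $H^k_y(y\geq x)$, a one-line bootstrap gives $\Psi\in H^k_y(y\geq x)$. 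The bounded inverse theorem then upgrades this bijection to an isomorphism. To pin down the constant $\tfrac1{2\sqrt2}$ I would prove the coercivity inequality $\|(2\sqrt2 I_d+\Omega_x)\Phi\|_{H^k}\ge 2\sqrt2\,\|\Phi\|_{H^k}$: expanding the square and writing $\|\cdot\|_{H^k}^2=\sum_{j=0}^{k}\|\partial_y^j\cdot\|_{L^2}^2$, this comes down to showing $\sum_{j=0}^{k}\mathrm{Re}\int_x^{+\infty}\partial_y^j(\Omega_x\Phi)\cdot\overline{\partial_y^j\Phi}\,dy\ge 0$, the $H^k$-analogue of Lemma \ref{op_Omega}, presumably handled by induction on $k$.

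The main obstacle is exactly this positivity. Since $\Omega_x\Phi=\sum_l\gamma_l(\Phi)e^{-\nu_l y}$ with $\gamma_l(\Phi)$ independent of $y$, one has $\partial_y^j(\Omega_x\Phi)=\sum_l(-\nu_l)^j\gamma_l(\Phi)\,e^{-\nu_l y}$, so $\partial_y^j(\Omega_x\Phi)$ keeps the rank-$N$ structure used in Lemma \ref{op_Omega}; but $\partial_y^j(\Omega_x\Phi)$ is \emph{not} $\Omega_x(\partial_y^j\Phi)$, and moving the $y$-derivatives onto $\Phi$ inside the integrals that define $\Omega_x$ produces boundary contributions at $y=x$ involving $\partial_y^m\Phi(x)$ for $m<j$. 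The delicate point of the proof is to show that, once summed over $j$, these boundary terms can be absorbed into the bulk terms — which, exactly as in the computation proving Lemma \ref{op_Omega}, are non-negative as soon as the negativity of the $\mu_k$ is invoked — so that $2\sqrt2 I_d+\Omega_x$ inherits on $H^k_y(y\geq x)$ the same coercivity, with the same constant $2\sqrt2$, that it has on $(L^2_y(y\geq x))^2$. Granting this, the bound $|\!|\!|(2\sqrt2 I_d+\Omega_x)^{-1}|\!|\!|_{\mathcal L(H^k)}\le\tfrac1{2\sqrt2}$ follows at once, and the corollary is proved.
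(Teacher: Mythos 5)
Your first step and your bijectivity argument are correct and are essentially the paper's own proof of the isomorphism part: the paper also solves $(2\sqrt2 I_d+\Omega_x)\Theta=\Phi$ in $(L^2_y(y\geq x))^2$ and bootstraps, using that $\Omega_x$ has finite rank spanned by the exponentials $e^{-\nu_l y}$, then invokes compactness/Fredholm.

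The route you propose for the constant $\tfrac1{2\sqrt2}$, however, collapses at exactly the point you flag as the ``main obstacle'': the $H^k$ analogue of Lemma \ref{op_Omega} is \emph{false}, and the boundary terms cannot be absorbed. Take $N=1$ (write $\mu<0$, $\nu$) and $\Phi=(\Phi_1,0)$ with $\Phi_1$ real, and set $a=\int_x^{+\infty}e^{-\nu s}\Phi_1(s)\,ds$. Then $(\Omega_x\Phi)_1(y)=-\mu a\,e^{-\nu y}$, and an integration by parts in the derivative pairing gives
\begin{equation*}
\mathrm{Re}\int_x^{+\infty}\Bigl[(\Omega_x\Phi)_1\overline{\Phi_1}+\partial_y(\Omega_x\Phi)_1\,\overline{\partial_y\Phi_1}\Bigr]dy
=|\mu|\Bigl[(1-\nu^2)a^2+\nu e^{-\nu x}a\,\Phi_1(x)\Bigr],
\end{equation*}
which is negative as soon as $\Phi_1(x)<-\nu^{-1}(1-\nu^2)a\,e^{\nu x}$, and tends to $-\infty$ if you keep $a$ fixed and send $\Phi_1(x)\to-\infty$ (a thin spike of height $-M$ and width $M^{-2}$ at $y=x$ added to a fixed bump farther right). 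Worse, along this family $\Omega_x\Phi$ depends on $\Phi$ only through $a$, so $\|\Omega_x\Phi\|_{H^1}$ stays bounded, and therefore $\|(2\sqrt2 I_d+\Omega_x)\Phi\|_{H^1}^2-8\|\Phi\|_{H^1}^2=4\sqrt2\,\mathrm{Re}\langle\Omega_x\Phi,\Phi\rangle_{H^1}+\|\Omega_x\Phi\|_{H^1}^2\to-\infty$: even the weaker coercivity inequality $\|(2\sqrt2 I_d+\Omega_x)\Phi\|_{H^k}\geq2\sqrt2\|\Phi\|_{H^k}$ that you aim for fails for the standard $H^1$ norm. Positivity of $\Omega_x$ is an $L^2$ phenomenon ($\Omega_x$ is self-adjoint for the $L^2$ pairing, not for the $H^k$ one), and no induction on $k$ or absorption of boundary contributions can restore it.

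For comparison, the paper never attempts an $H^k$ quadratic-form argument: after the same bootstrap it uses that $\Omega_x$ is compact (finite rank) on $(H^k_y(y\geq x))^2$, applies the Fredholm alternative, and transfers the bound from $L^2$ by comparing the point spectra of $2\sqrt2 I_d+\Omega_x$ and of its inverse in $H^k$ and in $L^2$ (eigenfunctions lie in the span of the $e^{-\nu_l y}$, hence in every $H^k$). Be aware that this spectral comparison controls eigenvalues, i.e.\ the spectral radius, not directly the $\mathcal{L}(H^k)$ operator norm of a non-$H^k$-self-adjoint operator — the computation above shows this distinction is genuinely at stake — so if you rework this step, the isomorphism part of your plan stands, but the norm estimate must be obtained by a different mechanism than coercivity in $H^k$.
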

\begin{proof}
Let $\Phi \in (H^k_y(y\geq x))^2.$ We know that there exists a unique $\Theta \in (L^2_y(y\geq x))^2$ such that 
$$(2\sqrt{2}I_d+\Omega_x)\Theta = \Phi.$$
In view of the definition of $\Omega_x,$  we have $\Omega_x\Theta \in (H^k_y(y\geq x))^2,$ hence 
$$\Theta = \frac{1}{2\sqrt{2}}(\Omega_x\Theta-\Phi) \in (H^k_y(y\geq x))^2.$$ 
This proves that $2\sqrt{2}I_d+\Omega_x: (H^k_y(y\geq x))^2\rightarrow (H^k_y(y\geq x))^2$ is invertible, hence $2\sqrt{2}$ is not an eigenvalue for 
 $-\Omega_x \in \mathcal{L}((H^k_y(y\geq x))^2).$ This operator is compact, since it is a finite-rank one.  
Then in view of Fredholm alternative, $2\sqrt{2}$ belongs to the resolvent of $-\Omega_x$. This means that $2\sqrt{2}I_d+\Omega_x $ is 
an isomorphism on $(H^k_y(y\geq x))^2.$
On the other hand, the point spectrum of $2\sqrt{2}I_d+\Omega_x$ relatively to the space $(H^k_y(y\geq x))^2$ is 
included in the point spectrum relatively to $(L^2_y(y\geq x))^2$, and similarly for $(2\sqrt{2}I_d+\Omega_x)^{-1}.$ 
Since $\Omega_x \in \mathcal{L}((H^k_y(y\geq x))^2)$
is compact, the spectrum of $2\sqrt{2}I_d+\Omega_x$ contains only eigenvalues. We deduce therefore that 
$$|\!|\!|(2\sqrt{2}I_d+\Omega_x)^{-1}|\!|\!|_{\mathcal{L}(H^k)} \leq |\!|\!|(2\sqrt{2}I_d+\Omega_x)^{-1}|\!|\!|_{\mathcal{L}(L^2)}\leq\frac{1}{2\sqrt{2}}.$$
\end{proof}

Equation (\ref{wp}) is equivalent to  
\begin{equation}\label{wpequiv}
 \Psi^r(x,y) = (2\sqrt{2}I_d+\Omega_x)^{-1} \mathcal{F}\Psi^r(x,y).
\end{equation}
For  $x\in \R$, we use the fixed point argument to prove that
(\ref{wpequiv}) has a unique solution  $\Psi^r(x,.) \in (H^n_y(y\geq x))^2$. To this end, the following lemma will be needed:
 \begin{lemma}\label{Op_f_lip}
  Let $x\in \mathbb{R}.$ The function $\mathcal{F}$ defined above is Lipschitz continuous on $(H^n_{y}(y\geq x))^2$ and there exists 
  a constant $C>0$ depending on $\beta$ such that for all $\Psi^r(x,.),\Psi^l(x,.) \in (H^n_{y}(y\geq x))^2,$ we have 
  $$\|(\mathcal{F}\Psi^r-\mathcal{F}\Psi^l)(x,.)\|_{(H^n_{y}(y\geq x))^2}\leq C\|\Psi^r(x,.)-\Psi^l(x,.)\|_{(H^n_{y}(y\geq x))^2}.$$
 \end{lemma}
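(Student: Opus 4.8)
The plan is to exploit that $\mathcal F$ is affine in $\Psi^r$. Comparing (\ref{wp3})--(\ref{wp4}) evaluated at two arguments $\Psi^r$ and $\Psi^l$, all the terms involving $\Upsilon$ and all the $\Psi$-independent inhomogeneities cancel, so that
$$(\mathcal F\Psi^r-\mathcal F\Psi^l)(x,\cdot)=\mathcal L_x\big(\Psi^r-\Psi^l\big)(x,\cdot),$$
where $\mathcal L_x$ is the linear operator whose components are $(\mathcal L_x\Phi)_1(x,y)=-\sqrt2\int_x^{+\infty}\Phi_2(x,s)(F_{12}-iF'_{22})(s+y)\,ds+\int_x^{+\infty}\Phi_1(x,s)F_{22}(s+y)\,ds$ and, symmetrically, $(\mathcal L_x\Phi)_2(x,y)=-\sqrt2\int_x^{+\infty}\Phi_1(x,s)(F_{12}+iF'_{22})(s+y)\,ds+\int_x^{+\infty}\Phi_2(x,s)F_{22}(s+y)\,ds$. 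It therefore suffices to prove that for each kernel $K\in\{F_{12},F_{22},F'_{22}\}$ the Hankel-type operator
$$(T_K\Phi)(y):=\int_x^{+\infty}\Phi(s)\,K(s+y)\,ds$$
is bounded from $L^2_y(y\ge x)$ into $H^n_y(y\ge x)$ by a constant depending only on $\beta$ and $n$ (in particular not on $x$); the Lipschitz constant of $\mathcal F$ is then a fixed multiple of the sum of these three operator norms. The same estimates show in passing that $\mathcal F$ maps $(H^n_y(y\ge x))^2$ into itself: the functions $\Upsilon_1,\Upsilon_2$ are finite combinations of $e^{-\nu_k y}$ with $\nu_k>0$ (Section \ref{sec_N_solit}), hence lie in every $H^n_y(y\ge x)$, while $F_{12},F_{22}\in H^{n+1}(\mathbb{R})$ by the bounds of the last step, so the inhomogeneities $F_{22}(x+\cdot)$ and $\sqrt2(F_{12}+iF'_{22})(x+\cdot)$ also belong to $H^n_y(y\ge x)$.

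For the key estimate I would pass to the Fourier side. Extending $\Phi$ by zero to all of $\mathbb{R}$ and writing $\check\Phi(u):=\Phi(-u)$, one has on $\mathbb{R}$ the convolution identity $T_K\Phi=\check\Phi* K$; hence, with the convention $\widehat f(\xi)=\int f(z)e^{-i\xi z}\,dz$ (so that $\widehat{F_{12}}=c_1$, $\widehat{F_{22}}=c_2$, $\widehat{F'_{22}}(\xi)=i\xi\,c_2(\xi)$), one gets $\widehat{T_K\Phi}(\xi)=\widehat\Phi(-\xi)\,\widehat K(\xi)$. Plancherel's identity then yields
$$\|T_K\Phi\|_{H^n(\mathbb{R})}\le\big\|(1+\xi^2)^{n/2}\,\widehat K\big\|_{L^\infty(\mathbb{R})}\;\|\Phi\|_{L^2(\mathbb{R})},$$
and since restriction to $\{y\ge x\}$ can only decrease Sobolev norms, the same bound holds with $H^n(\mathbb{R})$, $L^2(\mathbb{R})$ replaced by $H^n_y(y\ge x)$, $L^2_y(y\ge x)$, uniformly in $x$. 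This argument uses no smoothness of the kernels beyond the finiteness of these weighted multiplier norms, which is appropriate since $\beta$ (hence $c_1,c_2$) is only assumed bounded after weighting.

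It then remains to estimate $\|(1+\xi^2)^{n/2}c_1\|_{L^\infty}$, $\|(1+\xi^2)^{n/2}c_2\|_{L^\infty}$ and $\|(1+\xi^2)^{n/2}\xi\,c_2\|_{L^\infty}$ in terms of $\beta$. With $\lambda=\lambda(\xi)=\sqrt{\xi^2+\tfrac12}$ one has $\lambda\ge|\xi|$ and $\lambda\ge\tfrac1{\sqrt2}$, hence $1+\xi^2\le2\lambda^2$, so $(1+\xi^2)^{n/2}\le2^{n/2}\lambda^{n}$ and $(1+\xi^2)^{n/2}|\xi|\le2^{n/2}\lambda^{n+1}$. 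Using $|c_1(\xi)|\le|\beta(\lambda)|+|\beta(-\lambda)|$, $|c_2(\xi)|=\lambda^{-1}|\beta(\lambda)-\beta(-\lambda)|$, $\lambda^{-1}\le\sqrt2$, and setting $M_n:=\sup_{|\lambda|\ge1/\sqrt2}|\lambda|^{n}|\beta(\lambda)|$ (finite by hypothesis (\ref{hyp_beta_intro}), since $\lambda^{-2}\le2$), each of the three quantities is bounded by a dimensional constant times $M_n$. Combined with the previous step this gives the lemma with a constant $C=C(n)\,M_n$; in particular $C$ depends on $\beta$ only through $M_n$, equivalently through $\|\lambda^n\beta\|_{L^\infty([\frac1{\sqrt2},+\infty[)}$, and tends to $0$ together with it — precisely what makes $(2\sqrt2 I_d+\Omega_x)^{-1}\mathcal F$ a contraction, uniformly in $x$, in the proof of Proposition \ref{wp_Marchenko_1}.

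The only genuine difficulty is obtaining the bound on $T_K$ uniformly in $x$, which matters because (\ref{wpequiv}) must be solved for every $x$ with a common contraction constant. A direct Hilbert--Schmidt estimate would give only $\|T_K\|_{L^2\to L^2}^2\le\int_0^{+\infty}z\,|K(z+2x)|^2\,dz$, which blows up as $x\to-\infty$; going through the convolution/Plancherel representation, which is translation-covariant, removes this difficulty, and that is why I would organise the proof around it rather than around a pointwise kernel bound.
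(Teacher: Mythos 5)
Your proof is correct and takes essentially the same route as the paper's: both rewrite the (affine) difference of $\mathcal{F}$ as a Hankel-type convolution operator, pass to the Fourier side via Plancherel, and bound the $L^2\to H^n$ operator norm by a weighted $L^\infty$ bound on $c_1,c_2$, hence by $\|\lambda^n\beta\|_{L^\infty([\frac{1}{\sqrt{2}},+\infty[)}$, uniformly in $x$ thanks to translation invariance of the convolution. The only cosmetic difference is that the paper distributes the $(i\xi)^k$ factors over the symbols $d_k,e_k$ attached to each $y$-derivative, whereas you keep the kernels $F_{12},F_{22},F'_{22}$ fixed and place the full $(1+\xi^2)^{n/2}$ weight on the multiplier.
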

\begin{proof}
Let $\Psi^r(x,.),\Psi^l(x,.) \in (H^n_{y}(y\geq x))^2.$ For every $k\leq n,$ we have  
\begin{eqnarray}\label{wp6}
 \partial_y^k(\mathcal{F}\Psi^r-\mathcal{F}\Psi^l)_1(x,y)&=& -\int_x^{+\infty}\sqrt{2}(\Psi^r_2-\Psi^l_2)(x,s)(F_{12}-iF'_{22})^{(k)}(s+y)ds\notag\\
 && + \int_x^{+\infty}(\Psi^r_1-\Psi^l_1)(x,s)F^{(k)}_{22}(s+y)ds,
\end{eqnarray}
\begin{eqnarray}\label{wp7}
 \partial_y^k(\mathcal{F}\Psi^r-\mathcal{F}\Psi^l)_2(x,y)&=& -\int_x^{+\infty}\sqrt{2}(\Psi^r_1-\Psi^l_1)(x,s)(F_{12}+iF'_{22})^{(k)}(s+y)ds\notag\\
 && + \int_x^{+\infty}(\Psi^r_2-\Psi^l_2)(x,s)F^{(k)}_{22}(s+y)ds,
\end{eqnarray}
where 
\begin{equation*}
 (F_{12}-iF'_{22})^{(k)}(s+y) =\frac{1}{2\pi}\int_{\mathbb{R}}(i\xi)^k\left((1+\frac{\xi}{\lambda})\beta(\lambda)
 +(1-\frac{\xi}{\lambda})\beta(-\lambda)\right)e^{i\xi(s+y)}d\xi,
\end{equation*}
$$F^{(k)}_{22}(s+y) =\frac{1}{2\pi}\int_{\mathbb{R}}\frac{(i\xi)^k}{\lambda}(\beta(\lambda)-\beta(-\lambda))e^{i\xi(s+y)}d\xi.$$
Denote
$$d_k(-\xi)= (i\xi)^k\left((1+\frac{\xi}{\lambda})\beta(\lambda)
 +(1-\frac{\xi}{\lambda})\beta(-\lambda)\right),$$
$$e_k(-\xi)= \frac{(i\xi)^k}{\lambda}(\beta(\lambda)-\beta(-\lambda)).$$
Since  $\lambda\mapsto \lambda^{n+2}\beta(\lambda)$ is assumed to be in $L^\infty([\frac{1}{\sqrt{2}},+\infty[),$ there exists $C>0$ such that  
$$|\xi^k\lambda^2\beta(\lambda(\xi))| \leq \lambda^{k+2}|\beta(\lambda)|\leq C , k\in \{0,...,n\}.$$
Then $d_k, e_k \in L^1(\mathbb{R})\cap L^\infty(\mathbb{R})$  and  (\ref{wp6}) can be rewritten as follows 
\begin{eqnarray}\label{wp66}   
\partial_y^k (\mathcal{F}\Psi^r-\mathcal{F}\Psi^l)_1(x,y)&=& 
 \int_{\mathbb{R}}\omega^1_x(s)\widehat{e_k}(s+y)ds-\int_{\mathbb{R}}\omega^2_x(s)\widehat{d_k}(s+y)ds\notag\\
 &=& (\omega^1_x\star\widehat{e_k}(-\cdot)) (-y) -(\omega^2_x\star\widehat{d_k}(-\cdot)) (-y),\notag\\
\end{eqnarray}
with $\omega^1_x(s) =1_{]x,+\infty[}(\Psi^r_1-\Psi^l_1)(x,s),$  $\omega^2_x(s) =\sqrt{2}1_{]x,+\infty[}(\Psi^r_2-\Psi^l_2)(x,s),$ where 
 $\widehat{d_k}, \widehat{e_k}$ are the Fourier transforms of $d_k,c_k$ respectively. Since the functions 
$\widehat{d_k}, \widehat{e_k},$ $\omega^1_x$ and $\omega^2_x$ belongs to $L^2(\mathbb{R}),$ we have
$$\omega^1_x\star\widehat{e_k}(-\cdot) = \mathbf{F}^{-1}[\widehat{\omega^1_x}d_k],\quad \omega^2_x\star\widehat{e_k}(-\cdot) = \mathbf{F}^{-1}[\widehat{\omega^2_x}e_k],$$
where $\mathbf{F}^{-1}$ is the inverse Fourier transform. This implies that the left hand side members of previous equations belong to $L^2(\mathbb{R})$
and that $\partial_y^k(\mathcal{F}\Psi^r-\mathcal{F}\Psi^l)(x,.) \in (L^2_{y}(y\geq x))^2.$  
Applying the Fourier transform on  (\ref{wp66}) and taking the $L^2(\mathbb{R})$ norm, we obtain 
\begin{eqnarray*}
\|\partial_y^k(\mathcal{F}\Psi^r-\mathcal{F}\Psi^l)_1(x,.)\|_{L^2_{y}(y\geq x)}&\leq& \sqrt{2}\|d_k\|_{L^\infty(\mathbb{R})}\|(\Psi^r_2-\Psi^l_2)(x,.)\|_{L^2_{y}(y\geq x)}\\
&& + \|e_k\|_{L^\infty(\mathbb{R})}\|(\Psi^r_1-\Psi^l_1)(x,.)\|_{L^2_{y}(y\geq x)}\\
&\leq& 2\sqrt{2}\|\lambda^k\beta(.)\|_{L^\infty([\frac{1}{\sqrt{2}},+\infty[)}\\
&& \times (\|(\Psi^r_1-\Psi^l_1)(x,.)\|_{L^2_{y}(y\geq x)}\\
&& \quad +\|(\Psi^r_2-\Psi^l_2)(x,.)\|_{L^2_{y}(y\geq x)}).
\end{eqnarray*}
A similar argument yields the same inequality from (\ref{wp7}). This completes the proof.  
\end{proof}

Consequently, if $\|\lambda^n\beta(.)\|_{L^\infty[\frac{1}{\sqrt{2}},+\infty[}$ is small enough, the operator $ (2\sqrt{2}I_d+\Omega_x)^{-1}\mathcal{F}$ 
has a unique fixed point $\Psi^r(x,.)\in (H^n_y(y\geq x))^2$ which is the solution of (\ref{wp}).
\subsection{Variation for Proposition \ref{wp_Marchenko_2} : addition of time dependence}
Assume that 
$$\beta(t,\lambda)= c(\lambda)\exp(-4i\lambda\xi t),\quad \mu_k(t)= \mu^0_k\exp(4\lambda_k\nu_k t),\quad 1\leq k\leq N.$$
In the case $c\equiv  0,$ we deduce, following the same steps followed in Section \ref{sec_N_solit}, that there exist two families 
$\{f_k\}, \{g_k\}$ of functions in the space $\mathcal{C}^\infty(\mathbb{R}^2)$ such that for all $(t,x)\in \mathbb{R}^2,$ the function
$$y\mapsto \Upsilon(t,x,y) = \left(\sum_{k=1}^Nf_k(t,x)e^{-\nu_k y},\sum_{k=1}^Ng_k(t,x)e^{-\nu_k y}\right)^t$$
is the unique solution of the corresponding Marchenko system. The dependence on $t$ of the two operators $\Omega_x$ and $\mathcal{F}$
does not change the previous results. More specifically, the operator $\Omega_x$ stays non-negative independently of $(t,x)\in \mathbb{R}^2$.
This allows us to establish a similar corollary to Corollary \ref{Omega_iso}. On the other hand, we have 
 $$\|\xi^k\beta(t,\lambda(.))\|_{L^\infty(\mathbb{R})}= \|\xi^kc(\lambda(.))\|_{L^\infty(\mathbb{R})}, k\leq n.$$
Hence, if $\|\lambda^nc(.)\|_{L^\infty([\frac{1}{\sqrt{2}},+\infty[)}$ is small enough, for all $(t,x)\in \mathbb{R}^2,$ 
the operator $ (2\sqrt{2}I_d+\Omega_x)^{-1}\mathcal{F}$ has a unique fixed point 
$\Psi^r(t,x,.)\in (H^n_y(y\geq x))^2$ which is the solution of (\ref{wp}).

\subsection{Regularity with respect to $x$ and $t$ of Marchenko system solutions}\label{sec_reg}
To give sense to such regularity, the following reformulation 
$$\underline{\Phi}(x,p)= \Phi(x,x+p),\quad (x,p)\in \mathbb{R}\times\mathbb{R}^+,$$ 
which makes the domain of variables fixed, will be useful.  Equation (\ref{wp}) can be rewritten as follows
\begin{equation}\label{00wp}
(2\sqrt{2}I_d+\underline{\Omega}_x) \underline{\Psi}^r =  \mathcal{T}_x\underline{\Psi}^r + \mathcal{T}_x\underline{\Upsilon} + F_x,
\end{equation} 
where $\mathcal{T}_x$ is the linear part of the operator  $\mathcal{F}$.  Namely, 
$$\mathcal{T}_x(t)\Phi(p)  = \int_0^{+\infty}\widehat{T}(t,s+2x+p)\Phi(s)ds,\quad
T= \left(\begin{matrix}e_0& -\sqrt{2}d_0\\ -\sqrt{2}\bar{d_0}& e_0\end{matrix}\right),$$
$$\underline{\Omega}_x(t)\Phi(p)  = \int_0^{+\infty}\Omega(t,s+2x+p)\Phi(s)ds,\quad
\Omega= -\left(\begin{matrix} F_{21}& \sqrt{2}(F_{11}-iF'_{21})\\ \sqrt{2}(F_{11}+iF'_{21})& F_{21}\end{matrix}\right),$$
and $$F_x = \left(\begin{matrix} F_{22}(2x+\cdot)\\ \sqrt{2}(F_{12}+iF'_{22})(2x+\cdot) \end{matrix}\right).$$
The operator $(2\sqrt{2}I_d+\underline{\Omega}_x(t))$ has the same properties as   $(2\sqrt{2}I_d+\Omega_x(t)),$ 
i. e. it is coercive independently of $t,x$ on the space $(L^2(\mathbb{R}^+))^2.$ We set 
\begin{eqnarray*}
 \underline{\Psi}^r &=& (2\sqrt{2}I_d+\underline{\Omega}_x)^{-1}\mathcal{T}_x\underline{\Psi}^r + (2\sqrt{2}I_d+\underline{\Omega}_x)^{-1}(\mathcal{T}_x\underline{\Upsilon} + F_x)\\
&:=& \mathcal{M}_x\underline{\Psi}^r + R_x. 
\end{eqnarray*}
 
\begin{lemma}\label{op_cont}
Assume that $n\geq 3.$  Let $k$ be an integer such that $ k< \frac{n}{2}.$ 
The operator  $\mathcal{M}_x(t) \in \mathcal{L}((H^{n-2k}({\mathbb{R}^+}))^2)$ is contraction independently of 
$t,x.$ Moreover, The application
$$\mathbb{R}^2\rightarrow  \mathcal{L}((H^{n-2k}({\mathbb{R}^+}))^2): (t,x)\mapsto \mathcal{M}_x(t)$$ 
is of class $\mathcal{C}^k(\mathbb{R}^2).$ 
\end{lemma}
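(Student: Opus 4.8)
The plan is to decompose the claim into three quantitative statements about $\mathcal{M}_x(t) = (2\sqrt{2}I_d+\underline{\Omega}_x(t))^{-1}\mathcal{T}_x(t)$: first that $\mathcal{T}_x(t)$ maps $(H^{n-2k}(\mathbb{R}^+))^2$ into itself with operator norm controlled by $\|\lambda^n c\|_{L^\infty([1/\sqrt2,+\infty[)}$ uniformly in $(t,x)$; second that $(2\sqrt{2}I_d+\underline{\Omega}_x(t))^{-1}$ is bounded by $\tfrac{1}{2\sqrt2}$ on the same space uniformly in $(t,x)$ (this is exactly the analogue of Corollary \ref{Omega_iso}, established in the preceding subsection); and third that the composition is therefore a contraction once the smallness hypothesis on $c$ is in force. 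The first point is essentially a re-reading of Lemma \ref{Op_f_lip}: with the time dependence, $\mathcal{T}_x(t)\Phi(p) = \int_0^{+\infty}\widehat{T}(t,s+2x+p)\Phi(s)\,ds$ where the entries of $T$ are $e_0$ and $-\sqrt2 d_0$ built from $\beta(t,\lambda)=c(\lambda)e^{-4i\lambda\xi t}$, and since $|\xi^j\lambda^2 c(\lambda)e^{-4i\lambda\xi t}| = |\xi^j\lambda^2 c(\lambda)| \leq \lambda^{j+2}|c(\lambda)| \leq C$ for $j\leq n$, the Fourier-multiplier bound of Lemma \ref{Op_f_lip} applies verbatim, differentiated up to order $n-2k \leq n$ in $p$, giving $\||\mathcal{T}_x(t)\||_{\mathcal{L}(H^{n-2k})} \leq C\,\|\lambda^n c\|_{L^\infty}$. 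For the second point I would simply invoke the coercivity of $2\sqrt{2}I_d + \underline{\Omega}_x(t)$ on $(L^2(\mathbb{R}^+))^2$ stated just above the lemma, together with the finite-rank (hence compact) nature of $\underline{\Omega}_x(t)$ and the Fredholm/point-spectrum argument of Corollary \ref{Omega_iso}, which transfers the $L^2$ bound to every $H^{n-2k}$.

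For the contraction claim this then gives $\||\mathcal{M}_x(t)\||_{\mathcal{L}(H^{n-2k})} \leq \tfrac{1}{2\sqrt2}\cdot C\|\lambda^n c\|_{L^\infty} < 1$ as soon as the smallness threshold (which we may shrink further if needed, depending only on $N$, $n$, $\lambda_i$, $\mu_i^0$) is met; crucially the bound is independent of $(t,x)$ because every estimate above was. This handles the first sentence of the lemma.

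The remaining and genuinely delicate part is the $\mathcal{C}^k$ regularity of $(t,x)\mapsto \mathcal{M}_x(t)$. The strategy is to differentiate the integral kernels. The dependence on $x$ enters only through the translation $p\mapsto p+2x$ in the arguments of $\widehat{T}$ and of the kernel $\Omega$ defining $\underline{\Omega}_x$; differentiating in $x$ thus costs derivatives of these kernels, i.e. moves us from $H^{m}$ to $H^{m-1}$ per derivative, while differentiating in $t$ brings down a factor of $-4i\lambda\xi$ inside the Fourier integral, i.e. raises the effective order of the symbol by $1$ and correspondingly costs one Sobolev derivative; this is the bookkeeping reason for the ``$2k$'' loss (each $t$- or mixed derivative is as expensive as two $x$-derivatives in the worst combination, but more precisely one $t$-derivative costs two orders because $\lambda\xi \sim \lambda^2$ near infinity). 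Concretely I would show by induction on $|\alpha|\leq k$ that $\partial^\alpha_{(t,x)}\mathcal{T}_x(t)$ exists as a bounded operator $(H^{n-2k}(\mathbb{R}^+))^2\to (H^{n-2k}(\mathbb{R}^+))^2$ with uniformly bounded norm — using that the symbols $\xi^j(-4i\lambda\xi)^{a}\lambda^2 c(\lambda)$ are still in $L^\infty$ for $j + 2a \leq n$ — and separately that $(t,x)\mapsto(2\sqrt2 I_d+\underline{\Omega}_x(t))^{-1}$ is $\mathcal{C}^k$ with values in $\mathcal{L}(H^{n-2k})$, via the identity $\partial(A^{-1}) = -A^{-1}(\partial A)A^{-1}$ applied iteratively (legitimate because $\underline{\Omega}_x(t)$ depends smoothly on $(t,x)$ through the finitely many exponentials $e^{-\nu_k(2x+\,\cdot\,)}$ and $\mu_k^0 e^{4\lambda_k\nu_k t}$, with all $x$-derivatives staying in $H^{n-2k}$ because $\nu_k>0$ makes $e^{-\nu_k p}$ and all its derivatives $L^2(\mathbb{R}^+)$). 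The product rule then assembles $\mathcal{C}^k$ regularity of $\mathcal{M}_x$. The main obstacle is precisely keeping the Sobolev-exponent accounting consistent: one must verify that every term produced by $|\alpha|$ differentiations lands in $\mathcal{L}(H^{n-2k})$ and not in a rougher space, which is why the hypothesis is $k<\tfrac n2$ and $n\geq 3$, and this is where the proof requires care rather than ingenuity.
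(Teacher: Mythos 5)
Your proposal is correct and follows essentially the same route as the paper: contraction from the uniform coercivity bound $|\!|\!|(2\sqrt{2}I_d+\underline{\Omega}_x(t))^{-1}|\!|\!|\leq \tfrac{1}{2\sqrt{2}}$ combined with the smallness of $\mathcal{T}_x(t)$ via the Fourier-multiplier estimates of Lemma \ref{Op_f_lip}, and $\mathcal{C}^k$ regularity by differentiating the kernel $\widehat{T}$ (each $x$-derivative costing one Sobolev order, each $t$-derivative two, since $\partial_t\beta=-4i\lambda\xi\,\beta$ and $|\lambda\xi|\lesssim\langle\xi\rangle^{2}$), together with the smooth explicit dependence of the finite-rank $\underline{\Omega}_x(t)$ and the smoothness of the inversion map. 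Your symbol bookkeeping ($\xi^{j}(\lambda\xi)^{a}\lambda^{2}c(\lambda)$ bounded for $j+2a\leq n$) matches the paper's accounting giving $\partial_t^j\partial_x^l\mathcal{T}_x(t)\in\mathcal{L}(H^{n-2j-l})$.
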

\begin{proof}
The contraction property of  $\mathcal{M}_x(t)$ is a result of the fact that  $(2\sqrt{2}I_d+\Omega_x(t))^{-1}\mathcal{F}$ is contraction 
 independently of $(t,x).$ The two applications $(t,x)\mapsto \mathcal{T}_x(t)$ and $(t,x)\mapsto 2\sqrt{2}I_d+\underline{\Omega}_x(t)$ 
 are of classes $\mathcal{C}^k(\mathbb{R})$ and $\mathcal{C}^\infty(\mathbb{R})$ respectively.
In fact, we have $\widehat{T}\in H^n(\mathbb{R})$. Then there exists $C>0$ such that for all $t\in \mathbb{R},$ $\Phi \in (H^1({\mathbb{R}^+}))^2$ 
and $j\in \{0,...,n-1\}$, the $L^2(\mathbb{R}^+)$ norm of the term
$$\left[\left(\frac{1}{h}\left(\tau^x_{2h}\widehat{T}(-\cdot)-\widehat{T}(-\cdot)\right)-\widehat{T}'(-\cdot)\right)^{(j)}(t)\star1_{\mathbb{R}^+}\Phi\right](-2x-\cdot)$$  
is upper-bounded by  $$C\left\|\left(\frac{1}{h}\left(\tau^x_{2h}\widehat{T}-\widehat{T}\right)-\widehat{T}'\right)^{(j)}(t)\right\|_{L^2(\mathbb{R})}\|\Phi\|_{H^1(\mathbb{R}^+)},$$
where $\tau^x_{2h}$ is the  translation with respect to $x.$  Hence, the fact that
$$\lim_{h\rightarrow 0}\left\|\left(\frac{1}{h}\left(\tau^x_{2h}\widehat{T}-\widehat{T}\right)-\widehat{T}'\right)^{(j)}(t)\right\|_{L^2(\mathbb{R})} =0,$$
proves that the application $\mathbb{R}\rightarrow \mathcal{L}(H^{n-1}): x\mapsto \mathcal{T}_x(t)$ is differentiable. 
On the other hand, by dominated convergence theorem and the continuity of Fourier transform on  $L^2(\mathbb{R})$, 
for every $j\in \{0,...,n-2\}$ we have  
$$\lim_{h\rightarrow 0}\left\|\left(\frac{1}{h}\left(\tau^t_{h}\widehat{T}-\widehat{T}\right)-\partial_t\widehat{T}\right)^{(j)}(t)\right\|_{L^2(\mathbb{R})} =0.$$
This proves that $\mathbb{R}\rightarrow \mathcal{L}(H^{n-2}): t\mapsto \mathcal{T}_x(t)$ is differentiable. 
Similar arguments prove that the partial derivatives 
 $\partial_t^j\partial_x^l\mathcal{T}_x(t)$ exist in $\mathcal{L}((H^{n-2j-l}(\mathbb{R}))^2).$ On the one hand, we have
$\partial^j_t \beta(t,\lambda)= (-4i\lambda\xi)^j\beta(t,\lambda)$ et  $|\lambda\xi|^j \leq \langle\xi\rangle^{2j}.$ 
and on the other hand, the partial derivative $\partial_x^l$ of the kernel $\widehat{T}$ in $\mathcal{T}_x$ involves the power $|\xi|^j.$
Hence, since $\widehat{T} \in H^n(\mathbb{R}),$
the application $(t,x)\mapsto \mathcal{T}_x(t)$ is of class $\mathcal{C}^k(\mathbb{R}^2,(H^{n-2k}({\mathbb{R}^+}))^2).$ Moreover,  
the application $ U\mapsto U^{-1} \in\mathcal{GL}((H^{n-2k}({\mathbb{R}^+}))^2)$ is of class $\mathcal{C}^\infty$. Consequently,    
$$\mathbb{R}^2\rightarrow  \mathcal{L}((H^{n-2k}({\mathbb{R}^+}))^2): (t,x)\mapsto \mathcal{M}_x= (2\sqrt{2}I_d+\underline{\Omega}_x)^{-1}\mathcal{T}_x$$ 
is of class $\mathcal{C}^k(\mathbb{R}^2).$ 
\end{proof}

For $(n,k)\in \mathbb{N}^*\times\mathbb{N}$ such that $n\geq 3$ and $k<\frac{n}{2},$ we define the function \\
$\mathcal{K}:\mathbb{R}^2\times(H^{n-2k}(\mathbb{R}^+))^2 \rightarrow (H^{n-2k}(\mathbb{R}^+))^2 $  as follows: 
$$\mathcal{K}(t,x,\Phi) = \Phi -\mathcal{M}_x(t)\Phi - R_x,\quad (x,\Phi) \in \mathbb{R}\times(H^{n-2k}(\mathbb{R}^+))^2.$$ 
In view of Lemma \ref{op_cont}, $\mathcal{K}$ is of class $\mathcal{C}^k.$ Finally, for all $(t,x)\in \mathbb{R}^2,$ we have  
\begin{enumerate}
 \item $\mathcal{K}(t,x,\underline{\Psi}^r(t,x,.)) = 0.$
 \item $\partial_\Phi\mathcal{K}(t,x,\underline{\Psi}^r(t,x,.)) = I_d - \mathcal{M}_x(t).$
\end{enumerate}
In view of Lemma \ref{op_cont}, $\mathcal{M}_x(t)$ is a contraction, $|\!|\!|\mathcal{M}_x(t)|\!|\!|_{\mathcal{L}(H^{n-2k})}<1.$ This implies that  
 $\partial_\Phi\mathcal{K}(t,x,\underline{\Psi}^r(t,x,.))$ is an automorphism. Consequently, 
both the implicit function theorem and the uniqueness of $\underline{\Psi}^r(t,x,.)$ imply that
$ \underline{\Psi}^r  \in \mathcal{C}^k(\mathbb{R}^2,(H^{n-2k}(\mathbb{R}^+))^2).$
For fixed $t\in \R$, a similar argument proves that if $n \in \mathbb{N}^*,$ then for every integer $k$ such that $k<n$,  
 $\underline{\Psi}^r(t,.,.) \in \mathcal{C}^k(\mathbb{R}, (H^{n-k}(\mathbb{R}^+))^2)$.


\subsection{Construction of the Gross-Pitaevskii solution}
In this section it will be proved that the function
$$u(t,x)= 1+2\sqrt{2}i\overline{\Psi}_2(t,x,x)= 1+2\sqrt{2}i\underline{\overline{\Psi}}_2(t,x,0)$$
 is a  solution of Gross-Pitaevskii equation. To this end, the following lemma will be useful
 \begin{lemma}\label{wp_lemme}
 Let $(\Psi_1,\Psi_2)^t$ the solution of Marchenko system under the assumptions of Proposition \ref{wp_Marchenko_2} with $n\geq 3.$ Denote 
  $$\Psi = \left(\begin{array}{lr}
                  \Psi_1& \Psi_2\\
                  \overline{\Psi}_2& \overline{\Psi}_1
                 \end{array}
\right).$$
Let  $(\lambda,\xi) \in \mathbb{C}^2,$ such that $\lambda^2-\xi^2 = \frac{1}{2}$ and $Im(\xi) <0.$
Let $X(x,\lambda)=e^{-i\xi x}\left(\begin{matrix}1\\ \sqrt{2}(\lambda - \xi)\\ \end{matrix}\right)$. We set 
\begin{eqnarray*}
\psi(t,x,\lambda) &=& X(x,\lambda) -\int_x^{+\infty}\Psi(t,x,s)X(s,\lambda)ds\\
&=& \left(Id -\int_0^{+\infty}\underline{\Psi}(t,x,p)e^{-i\xi p}dp\right)X(x,\lambda)ds,
\end{eqnarray*}
$$\chi_1 = (\sqrt{3}-1)^{\frac{1}{2}}e^{i\frac{E}{2}x}\psi_1,\quad \chi_2 = (\sqrt{3}+1)^{\frac{1}{2}}e^{i\frac{E}{2}x}\psi_2,\quad 
E = \frac{2}{\sqrt{3}}\lambda.$$
Then $\psi(t,.,\lambda)\in \mathcal{C}^2(\mathbb{R})$ is a solution of 
\begin{equation}
\label{Zakharov02}
iM\partial_xV+QV-\lambda V=0,
\end{equation}
with $$M=
\left(\begin{matrix}
1 & 0\\
0   &-1\\
\end{matrix}\right) ,\quad
Q(x)=
\left(\begin{matrix}
0 & \bar{q}(x)\\
q(x)   &0\\
\end{matrix}\right),\quad \text{and}\quad q= \frac{\sqrt{2}}{2}u.
$$
Moreover, we have  
\begin{equation}\label{wp8}
 \partial_t\chi -B\chi= i\sqrt{3}(\frac{E}{2}-\xi)^2\chi, 
\end{equation}
with $\chi = \left(\begin{array}{lr}\chi_1\\ \chi_2\end{array}\right),$  
$B=
-\sqrt{3}i\left(\begin{matrix}
1 & 0\\
0   &1\\
\end{matrix}\right)\partial^2_x
+
\left(\begin{matrix}
\frac{|u|^2-1}{\sqrt{3}+1}i & -\partial_x\bar{u}\\
\partial_xu   &\frac{|u|^2-1}{\sqrt{3}-1}i\\
\end{matrix}\right).
$ 
 \end{lemma}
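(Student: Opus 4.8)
The plan is to exploit the fact that the kernel $\Psi$ built from the Marchenko system is, by construction, the transformation kernel that intertwines the free problem with the potential problem. First I would verify that $\psi(t,\cdot,\lambda)\in\mathcal{C}^2(\mathbb{R})$: this follows from Proposition \ref{wp_Marchenko_2} (with $n\geq 3$) since $\underline{\Psi}(t,x,\cdot)\in (H^{n}(\mathbb{R}^+))^2$ and $\underline{\Psi}\in\mathcal{C}^k(\mathbb{R}^2,(H^{n-2k}(\mathbb{R}^+))^2)$, so the integral $\int_0^{+\infty}\underline{\Psi}(t,x,p)e^{-i\xi p}dp$ is $\mathcal{C}^2$ in $x$ (the condition $\mathrm{Im}(\xi)<0$ guarantees absolute convergence and that one may differentiate under the integral sign, picking up boundary terms at $p=0$). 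Next, to establish \eqref{Zakharov02} I would substitute the Ansatz $\psi = X-\int_x^{+\infty}\Psi(t,x,s)X(s,\lambda)ds$ into the operator $iM\partial_x+Q-\lambda$. Since $X(x,\lambda)$ solves the free equation $iM\partial_xX = \lambda X$ (one checks this directly from $\lambda^2-\xi^2=\tfrac12$ and the explicit form of $X$), the computation reduces, after differentiating the integral (boundary term at $s=x$ plus $\partial_x\Psi$ term), to an identity that must hold for all $X(s,\lambda)$; matching the boundary term at $s=x$ forces the relation $q=\tfrac{\sqrt2}{2}u$ with $u = 1+2\sqrt2\,i\,\overline{\underline{\Psi}}_2(t,x,0)$, and matching the interior term forces a first-order PDE on $\Psi(t,x,s)$ in the variables $x,s$ — this is precisely the differentiated form of the Marchenko system \eqref{marchenko1}, which I would verify by differentiating \eqref{wp} (equivalently \eqref{00wp}) in $x$ and $y$ and using the specific algebraic structure of the kernels $F_{ij}$. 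The regularity needed to justify these manipulations ($\Psi$ being $\mathcal{C}^1$ jointly, with the right decay in $s$) is supplied by Section \ref{sec_reg}.

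For the time evolution \eqref{wp8} I would proceed analogously but now differentiate in $t$. The key input is that the time dependence was inserted into the scattering data exactly so that $\beta(t,\lambda)=c(\lambda)e^{-4i\lambda\xi t}$ and $\mu_k(t)=\mu_k^0 e^{4\lambda_k\nu_k t}$, i.e. the data evolves by the linear flow $\partial_t = -4i\lambda\xi$ (resp. $4\lambda_k\nu_k$). Translating this through the Marchenko system yields a linear evolution equation for $\Psi(t,x,\cdot)$ whose spatial part is second order — this is where the jump from $\mathcal{C}^k$ in $H^{n-2k}$ (rather than $H^{n-k}$) comes from, matching the "loss of two derivatives per time derivative" already visible in Proposition \ref{wp_Marchenko_2}. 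Concretely, I would write down the second-order operator $B$ and check, by substituting $\psi$ into $\partial_t\psi - \widetilde{B}\psi$ for the appropriate $\widetilde{B}$ acting on $\psi$ (before the conjugation to $\chi$), that all terms involving the integral kernel cancel by the Marchenko relation, leaving only the free contribution $\partial_t X - \widetilde{B}_0 X = i\sqrt3(\tfrac{E}{2}-\xi)^2 X$-type identity, which is a direct computation using $E=\tfrac{2}{\sqrt3}\lambda$ and $\lambda^2-\xi^2=\tfrac12$. The passage from $\psi$ to $\chi$ via the gauge factors $(\sqrt3\mp1)^{1/2}e^{iEx/2}$ is designed precisely to symmetrize the off-diagonal terms and turn the $\lambda$-dependent eigenvalue into the clean right-hand side $i\sqrt3(\tfrac{E}{2}-\xi)^2\chi$; I would carry out this conjugation explicitly and read off the stated form of $B$.

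The main obstacle I anticipate is the bookkeeping in the two "differentiated Marchenko" identities — i.e., showing that the first- and second-order PDEs satisfied by the kernel $\Psi(t,x,s)$ are genuine consequences of the integral equation \eqref{wp}. This requires (i) enough joint regularity of $\Psi$ in $(t,x,s)$ to differentiate the integral equation legitimately, which one gets from Section \ref{sec_reg} but must be invoked carefully near the diagonal $s=x$ where the reformulation $\underline{\Psi}(x,p)=\Psi(x,x+p)$ is essential, and (ii) exploiting the precise algebraic form of the kernels — in particular the relations between $F_{11},F_{21}$ and the exponentials $e^{-\nu_k z}$, and between $F_{12},F_{22}$ and $\beta$ — together with the fact that $X(s,\lambda)$ ranges over a family rich enough (as $\lambda$, equivalently $\xi$, varies over the admissible branch) to force pointwise identities on the kernel. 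Once these two differentiated forms are in hand, both \eqref{Zakharov02} and \eqref{wp8} follow by the substitution-and-cancellation scheme sketched above, and the regularity claim $\psi(t,\cdot,\lambda)\in\mathcal{C}^2(\mathbb{R})$ is immediate from the smoothing provided by the exponentially decaying factor $e^{-i\xi p}$ with $\mathrm{Im}(\xi)<0$.
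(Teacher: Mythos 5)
Your overall skeleton coincides with the paper's: get the regularity from Proposition \ref{wp_Marchenko_2}, prove a pointwise system satisfied by the kernel $\Psi$ itself, substitute the Ansatz into $iM\partial_x+Q-\lambda$ so that the boundary term at $s=x$ fixes $q=\frac{\sqrt{2}}{2}u$ and the interior term cancels, and treat the time direction through the prescribed evolution of the scattering data before conjugating to $\chi$. The genuine gap sits exactly at the point you flag as the main obstacle: how to pass from the integral equation \eqref{wp}/\eqref{marchenko1} to the pointwise identities on the kernel. Differentiating the Marchenko equation in $x$ and $y$ does \emph{not} by itself yield the system \eqref{Zakharov05}; it yields integral equations for the derivatives of $\Psi$, with boundary contributions involving $\Psi(x,x)$, i.e. $u$. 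The paper's decisive step, used twice and missing from your plan, is a uniqueness argument for the Marchenko operator: one forms the explicit defect $C=(C_1,C_2)^t$ with $C_1=(\partial_x+\partial_y)\Psi_1-i\left(-\frac{i}{\sqrt{2}}\Psi_2+\bar q\overline{\Psi}_2\right)$, $C_2=(\partial_x-\partial_y)\Psi_2-i\left(-\frac{i}{\sqrt{2}}\Psi_1+\bar q\overline{\Psi}_1\right)$, checks that $C$ solves the homogeneous system $(2\sqrt{2}I_d+\underline{\Omega}_x)C=\mathcal{T}_xC$, and concludes $C\equiv 0$ from the coercivity of $2\sqrt{2}I_d+\Omega_x$ (Lemma \ref{op_Omega}, Corollary \ref{Omega_iso}) combined with the smallness (contraction) of $\mathcal{T}_x$. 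The same mechanism gives \eqref{wp8}: using the concrete identities $\partial_tF_1=-2\partial_zF_2+2\partial_z^3F_2$ and $\partial_tF_2=-4\partial_zF_1$ (consequences of $\beta(t,\lambda)=c(\lambda)e^{-4i\lambda\xi t}$, $\mu_k(t)=\mu_k^0e^{4\lambda_k\nu_kt}$), one shows that $\partial_t\Psi$ and the second-order quantity $G$ satisfy the \emph{same} inhomogeneous Marchenko equations and then invokes uniqueness to get $\partial_t\Psi=-G$; your phrase ``translating this through the Marchenko system yields a linear evolution equation for $\Psi$'' names the goal but not the argument that achieves it.

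Your proposed fallback — that $X(s,\lambda)$ ``ranges over a family rich enough \ldots to force pointwise identities on the kernel'' — does not fill this hole. The lemma is stated for a single fixed admissible pair $(\lambda,\xi)$, so the derivation of the kernel identities cannot presuppose the conclusion for a continuum of $\lambda$'s; and even if one did vary $\lambda$, turning ``the integral against $X(\cdot,\lambda)$ vanishes for all admissible $\lambda$'' into a pointwise statement would require a completeness/injectivity property of the family $e^{-i\xi s}\left(1,\sqrt{2}(\lambda-\xi)\right)^t$ that is neither stated nor proved, and which the paper never needs. Once that remark is replaced by the defect-plus-uniqueness argument (which your own coercivity/fixed-point setup already provides), the rest of your outline — the boundary matching giving $Q-Q_0+iM\Psi(x,x)-i\Psi(x,x)M=0$, the algebraic cancellation $\left(\xi\Psi M+\Psi Q_0-\lambda\Psi\right)X=0$ in the interior, and the gauge conjugation producing the right-hand side $i\sqrt{3}\left(\frac{E}{2}-\xi\right)^2\chi$ — proceeds as in the paper.
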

\begin{proof}
The proof depends essentially on the well-posedness of Marchenko system and the choice of the function 
$$\beta(t,\lambda)= c(\lambda)\exp(-4i\lambda\xi t).$$ 
Since $n\geq 3,$ we have the following regularity    
$\underline{\Psi}= \left(\begin{matrix}\underline{\Psi}_1\\ \underline{\Psi}_2\end{matrix}\right) \in \mathcal{C}^1(\mathbb{R}^2,H^1(\mathbb{R}^+)^2)$
and $\underline{\Psi}(t,.,.)\in \mathcal{C}^2(\mathbb{R},H^1(\mathbb{R}^+)^2 )$ for all $t\in \mathbb{R}$. 
To prove that  $\psi$ is a  solution of (\ref{Zakharov02}),  it suffices to prove that $\Psi$ satisfies the linear system 
\begin{equation}\label{Zakharov05}
 iM\partial_x\Psi+i\partial_y\Psi M - \Psi Q_0+ Q(x)\Psi = 0,\quad Q_0 =\left(\begin{matrix} 0 & \frac{1}{\sqrt{2}}\\ \frac{1}{\sqrt{2}}   &0\\ \end{matrix}\right).
\end{equation}
In fact, replacing $Q$ by $Q_0,$ $X$ satisfies the system (\ref{Zakharov02}). Hence, we have 
\begin{eqnarray*}
 iM\partial_x\psi+Q\psi-\lambda \psi &=&  (Q-Q_0)X -(iM\partial_x+Q-\lambda Id)\int_x^{+\infty}\Psi(x,s)X(s,\lambda)ds \\
&=& (Q-Q_0)X +iM\Psi(x,x)X \\
&& -\int_x^{+\infty}(-i\partial_s\Psi(x,s)M+\Psi(x,s)Q_0-\lambda \Psi(x,s))X(s,\lambda)ds \\ 
&=& (Q-Q_0 +iM\Psi(x,x) -i\Psi(x,x)M)X\\
&& -\int_x^{+\infty}\left(\xi \Psi(x,s)M+\Psi(x,s)Q_0-\lambda \Psi(x,s)\right)X(s,\lambda)ds. \\ 
\end{eqnarray*}
A simple calculation proves that  $\left(\xi \Psi(x,s)M+\Psi(x,s)Q_0-\lambda \Psi(x,s)\right)X(s,\lambda) = 0,$ 
and since  $u(x)= 1+2\sqrt{2}i\overline{\Psi}(x,x),$ we have  
$$Q-Q_0 +iM\Psi(x,x) -i\Psi(x,x)M=  0.$$ 
To verify that $\Psi$ satisfies (\ref{Zakharov05}), we set  
$$C_1= (\partial_x+\partial_y)\Psi_1-i\left(-\frac{i}{\sqrt{2}}\Psi_2+\bar{q}\overline{\Psi}_2\right),$$
$$C_2= (\partial_x-\partial_y)\Psi_2-i\left(-\frac{i}{\sqrt{2}}\Psi_1+\bar{q}\overline{\Psi}_1\right).$$
Then by direct calculation we find that $p\mapsto C(x,p) =(C_1(x,x+p), C_2(x,x+p))^t \in (L^2(\mathbb{R}^+))^2$ satisfies the homogeneous
Marchenko system 
$$(2\sqrt{2}I_d+ \underline{\Omega}_x)C = \mathcal{T}_x C ,\quad x\in \mathbb{R},$$
which has a unique trivial solution $C=0.$ 
This proves that $\Psi$ satisfies the linear system (\ref{Zakharov05}). To prove (\ref{wp8}), denote  $h= e^{i\xi x}\psi.$ Then (\ref{wp8}) is equivalent to
\begin{equation}\label{wp9}
 \partial_th = g,
\end{equation}
with  
$$
 g= \left(\begin{array}{lr} -i\frac{|u|^2-1}{\sqrt{3}+1}& \frac{\sqrt{3}+1}{\sqrt{2}}\partial_x\bar{u}\\ -\frac{\sqrt{3}-1}{\sqrt{2}}\partial_xu &-i\frac{|u|^2-1}{\sqrt{3}-1} \end{array}\right)h 
+(2\sqrt{3}\xi-2\lambda)\partial_xh+i\sqrt{3}\partial^2_xh.
$$
Since $\psi$ is a solution of (\ref{Zakharov02}), we have  
$$\lambda \partial_xh = \left(\begin{array}{lr} 0&\partial_x\bar{q}\\ \partial_xq & 0  \end{array}\right)h +\left(\begin{array}{lr} \xi&\bar{q}\\ q& -\xi \end{array}\right)\partial_xh 
+\left(\begin{array}{lr} i&0\\ 0& -1\end{array}\right)\partial^2_xh. 
 $$
Substituting into the expression of $g,$ we find that 
\begin{eqnarray}\label{wp10}
 g&= &\left(\begin{array}{lr} -i\frac{|u|^2-1}{\sqrt{3}+1}& \frac{\sqrt{3}-1}{\sqrt{2}}\partial_x\bar{u}\\ -\frac{\sqrt{3}+1}{\sqrt{2}}\partial_xu &-i\frac{|u|^2-1}{\sqrt{3}-1} \end{array}\right)h
+ 2\left(\begin{array}{lr} (\sqrt{3}-1)\xi& -\bar{q}\\ -q& (\sqrt{3}+1)\xi\end{array}\right)\partial_x h \notag\\
&&+i\left(\begin{array}{lr} \sqrt{3}-2 &0\\ 0& \sqrt{3}+2\end{array}\right)\partial^2_xh.
\end{eqnarray}
Next, note that the definition of $\psi$ implies that
$$h(t,x,\lambda) = \left(\begin{matrix} 1\\ \sqrt{2}(\lambda-\xi)\\ \end{matrix}\right)-\int_0^{+\infty}\Psi(t,x,x+s)X(s,\lambda)ds.$$ 
Hence, Substituting into (\ref{wp10}), integrating by parts and  using (\ref{Zakharov05}), we obtain 
$$g(t,x,\lambda) = \int_0^{+\infty}G(t,x,x+s)X^+_1(s,\lambda)ds,$$
with 
$$G=  \left(\begin{matrix} G_1&G_2\\  \overline{G}_2&\overline{G}_1\end{matrix} \right)=
                  \left(\begin{matrix}
                i(\partial_x+\partial_y)^2\Psi_1+2\bar{q}(\partial_x+\partial_y)\overline{\Psi}_2&   i(\partial_x+\partial_y)^2\Psi_2+2\bar{q}(\partial_x+\partial_y)\overline{\Psi}_1\\
                -i(\partial_x+\partial_y)^2\overline{\Psi}_2+2q(\partial_x+\partial_y)\Psi_1&   -i(\partial_x+\partial_y)^2\overline{\Psi}_1+2q(\partial_x+\partial_y)\Psi_2\\
                \end{matrix} \right).
$$
On the other hand, we have 
$$\partial_t h(t,x,\lambda) = -\int_0^{+\infty}\partial_t\Psi(t,x,x+s)X(s,\lambda)ds.$$
Then to prove (\ref{wp9}), it suffices to prove that for every $(t,x)\in \mathbb{R}^2$
\begin{equation}
 \partial_t\Psi(t,x,.) = -G(t,x,.).
\end{equation}
To this end, we verify that for every $(t,x)\in\mathbb{R}^2,$ 
the two functions $\partial_t\left(\begin{matrix}\Psi_1\\ \Psi_2\end{matrix}\right)$ and  $\left(\begin{matrix}G_1\\G_2\end{matrix}\right)$ 
satisfy the same Marchenko equations in the space $(L_y(y\geq x))^2$ by using the relationships  
$$\partial_tF_1(t,z)= -2\partial_zF_2(t,z)+ 2\partial^3_zF_2(t,z),$$
$$\partial_tF_2(t,z)= -4\partial_zF_1(t,z),$$
which are provided by the definition of $F_1$ and $F_2.$ Differentiating (\ref{marchenko1}) (in its version depending on $t$) with respect to $t,$  
we have for every $y\geq x$
\begin{eqnarray*}
&&2\sqrt{2}\partial_t\Psi_1(t,x,y)=D_1(t,x,y)\\
 &&-\int_{x}^{+\infty}\sqrt{2}\Psi_2(t,x,s)(F_1(t,s+y)-iF'_2(t,s+y))+\Psi_1(t,x,s)F_2(t,s+y)ds,\\
&&2\sqrt{2}\partial_t\Psi_2(t,x,y)=D_2(t,x,y)\\
&&-\int_{x}^{+\infty}\sqrt{2}\Psi_1(t,x,s)(F_1(t,s+y)+iF'_2(t,s+y))+\Psi_2(t,x,s)F_2(t,s+y)ds,\\
\end{eqnarray*}
with
\begin{eqnarray*}
D_1(t,x,y)&=& -\int_{x}^{+\infty}\sqrt{2}\Psi_2(t,x,s)(4F'''_2(t,s+y)-2F'_2(t,s+y)+4iF'_1(t,s+y))ds\\
 &&+4\int_{x}^{+\infty}\Psi_1(t,x,s)F'_1(t,s+y)ds-4F'_1(t,x+y)),\\
D_2(t,x,y)&=& 4\int_{x}^{+\infty}\Psi_2(t,x,s)F'_1(t,s+y)ds\\
&&-\int_{x}^{+\infty}\sqrt{2}\Psi_1(t,x,s)(4F'''_2(t,s+y)-2F'_2(t,s+y)-4iF'_1(t,s+y))ds\\
&& +\sqrt{2}(4F'''_2(t,x+y)-2F'_2(t,x+y)-4iF'_1(t,x+y)).
\end{eqnarray*}
On the other hand, differentiating (\ref{marchenko1}) with respect to $x$ and with respect to $y,$ integrating by parts and using (\ref{Zakharov05}), we obtain
\begin{eqnarray*}
&&2\sqrt{2}\partial_tG_1(t,x,y)=D_1(t,x,y)\\
 &&-\int_{x}^{+\infty}\sqrt{2}G_2(t,x,s)(F_1(t,s+y)-iF'_2(t,s+y))+G_1(t,x,s)F_2(t,s+y)ds,\\
&&2\sqrt{2}\partial_tG_2(t,x,y)=D_2(t,x,y)\\
&&-\int_{x}^{+\infty}\sqrt{2}G_1(t,x,s)(F_1(t,s+y)+iF'_2(t,s+y))+G_2(t,x,s)F_2(t,s+y)ds.\\
\end{eqnarray*} 
Consequently, for every $(x,y)\in \mathbb{R}^2,$ the two functions $\partial_t\left(\begin{matrix}\Psi_1(t,x,.)\\ \Psi_2(t,x,.)\end{matrix}\right)$ and
$\left(\begin{matrix}G_1(t,x,.)\\G_2(t,x,.)\end{matrix}\right)$  satisfy the same Marchenko equations in the space $(L_y(y\geq x))^2$.
Thus, the conclusion follows from the uniqueness.
\end{proof}

Since $\psi$ is a solution of (\ref{Zakharov02}), $\chi$ is a solution of 
$$L\chi = E\chi,$$
with $L=
\left(\begin{matrix}
(1+\sqrt{3})i\partial_x & \bar{u} \\
u   &(1-\sqrt{3})i\partial_x
\end{matrix}\right),$ and in view of\eqref{wp8}, $\partial_t\chi -B\chi$ is also a solution for the same equation, i. e.  
$$L(\partial_t\chi -B\chi)= E(\partial_t\chi -B\chi).$$
On the other hand, we have  
$$\partial_tL\chi + L\partial_t\chi= E\partial_t\chi.$$
Combining the last two equations, we find that 
$$(\partial_tL - [L,B])\chi = 0.$$
Hence, we have $$(i\partial_tu+ \partial^2_xu + (1-|u|^2)u)\left(1-\int_0^{+\infty}\left(\underline{\Psi}_1+\sqrt{2}(\lambda-\xi)\underline{\Psi}_2\right) (x,p)e^{-i\xi p}dp\right) = 0.$$
Let now $\xi = \frac{1}{\sqrt{2}}(s-i)$ and $\lambda = \frac{1}{\sqrt{2}}(f(s)-i\frac{s}{f(s)})$ with $f(s) = \frac{1}{\sqrt{2}}\sqrt{s^2 + \sqrt{s^4 + 4s^2}}.$   
We have
\begin{equation}\label{GP_u}
\int_0^{+\infty}\left(\underline{\Psi}_1+\sqrt{2}(\lambda-\xi)\underline{\Psi}_2\right) (x,p)e^{-i\xi p}dp = 
\hat{f}_1\left(\frac{s}{\sqrt{2}}\right) +\sqrt{2}(\lambda-\xi) \hat{f}_2\left(\frac{s}{\sqrt{2}}\right),
\end{equation}
with $f_1(p) = \underline{\Psi}_1(p)e^{-\frac{p}{\sqrt{2}}}1_{\mathbb{R}^+},  f_2(p) = \underline{\Psi}_2(p)e^{-\frac{p}{\sqrt{2}}}1_{\mathbb{R}^+}.$
Thus, for $s\geq 1,$ the function $s\mapsto\lambda(s)-\xi(s)$ is bounded and the right hand side member of 
(\ref{GP_u}) converges to zero when $s$ tends to $+\infty.$ Then 
for every $x,t \in \mathbb{R},$ there exists $s_0\geq 1$ such that $1-\hat{f}_1\left(\frac{s_0}{\sqrt{2}}\right) -\sqrt{2}(\lambda-\xi) \hat{f}_2\left(\frac{s_0}{\sqrt{2}}\right) \neq 0.$ 
This proves that     
$$i\partial_tu+ \partial^2_xu + (1-|u|^2)u = 0 \quad \text{on} \quad \mathbb{R}^2,$$
and $u$ is therefor a solution of Gross-Pitaevskii equation. 

\subsection{Behavior of the constructed solution when $x\rightarrow \pm\infty$}
It was proved before that the solution $\Upsilon$ of Marchenko system corresponding to  $\beta \equiv 0$ is of the form
$$\Upsilon(t,x,y) = \sum_{k=1}^N\left(\begin{matrix}f_k(t,x)\\g_k(t,x)\end{matrix}\right)e^{-\nu_k y}, y\geq x,$$
or 
$$\underline{\Upsilon}(t,x,p) = \sum_{k=1}^N\left(\begin{matrix}f_k(t,x)e^{-\nu_k x}\\g_k(t,x)e^{-\nu_k x}\end{matrix}\right)e^{-\nu_k p}, p\geq 0.$$
We have the following lemma 

\begin{lemma} \label{Upsilon_born}
If  $\nu_k$ are pairwise distinct, the functions   
$$\tilde{g}_k(t,x) = g_k(t,x)e^{-\nu_k x},\quad \tilde{f}_k(t,x) = f_k(t,x)e^{-\nu_k x}$$
are bounded on $\mathbb{R}^2$.
\end{lemma}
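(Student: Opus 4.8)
The plan is to analyze the explicit linear system \eqref{sys1}, $(I+T)(x)F = C(x)$, that determines the coefficients $f_k(x), g_k(x)$ (now with the extra time dependence through $\mu_k(t) = \mu_k^0 e^{4\lambda_k\nu_k t}$), and to extract from Cramer's rule the precise exponential behavior of each $\tilde f_k = f_k e^{-\nu_k x}$ and $\tilde g_k = g_k e^{-\nu_k x}$ as $x \to \pm\infty$. First I would observe that, after conjugating by $Q$ as in the proof that $(I+T)$ is invertible, one reduces to the Hermitian positive–definite matrix $(A - D\overline{B} + I_N)\hat D$, whose $(i,j)$ entry is $(\mu_i\mu_j + \bar\beta_i\beta_j)\int_{-\infty}^{-x} e^{(\nu_i+\nu_j)t}\,dt$. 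The key structural fact is that every entry of this matrix, and of the right-hand side, is a linear combination of the exponentials $e^{-(\nu_i+\nu_j)x}$ (resp. $e^{-\nu_k x}$) with coefficients that are bounded in $t$ on all of $\mathbb{R}$ once the explicit form of $\mu_k(t)$ is inserted — because the factor $\mu_i\mu_j = \mu_i^0\mu_j^0 e^{4(\lambda_i\nu_i + \lambda_j\nu_j)t}$ recombines with the natural $t$-dependence of $F$ and is controlled. (This boundedness in $t$ is exactly the point where the hypothesis on the form of $\mu_k(t)$ and $\beta(t,\lambda)$ enters.)

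Next I would carry out the asymptotic analysis in $x$ separately in the two regimes. As $x \to +\infty$, each entry of $(A-D\overline B+I_N)\hat D$ tends to $\delta_{ij}$-part coming from $I_N\hat D = \hat D$ (the integral $\int_{-\infty}^{-x}$ collapses), so the matrix converges to the nonsingular diagonal $\hat D$, while each component of $C$ decays like $e^{-\nu_k x} \to 0$; hence $F \to 0$ and, more quantitatively, $f_k(x), g_k(x) = O(e^{-\nu_1 x})$ where $\nu_1 = \min_k \nu_k$, which forces $\tilde f_k, \tilde g_k \to 0$. As $x \to -\infty$ the analysis is the genuinely delicate one: all exponentials $e^{-(\nu_i+\nu_j)x}$ blow up, so one must factor out the dominant exponential from each row and column of $(I+T)$ and from $C$, and track cancellations. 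Writing $e^{-\nu_k x} = E_k$ and pulling $E_i$ out of row $i$, the reduced matrix has entries of the form $\frac{\mu_i\mu_j + \bar\beta_i\beta_j}{\nu_i+\nu_j} E_j$ plus the $I_N$ term $E_i^{-1}\delta_{ij}$; Cramer's rule then expresses $f_k$ as a ratio of two polynomials in the $E_j$'s, and the claim $\tilde f_k = E_k^{-1} f_k$ bounded is equivalent to showing that in this ratio the top-degree monomial in the numerator matches (up to a bounded-in-$t$ constant) $E_k$ times the top-degree monomial of $\det(I+T)$. This is where I expect the pairwise-distinctness of the $\nu_k$ to be essential: it guarantees that the relevant Cauchy-type / Vandermonde-type determinants built from $\frac{1}{\nu_i+\nu_j}$ are nonzero, so no spurious degeneracy inflates the growth.

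Concretely, the main computation is to show $\det\big((I+T)(x)\big) = \big(\prod_{k=1}^N E_k^2\big)\,P(t) + (\text{lower order in the } E_k)$ with $P(t)$ bounded away from $0$ uniformly in $t$ — this uses the positive-definiteness of $(A-D\overline B+I_N)\hat D$ to keep $\det(I+T)$ bounded below, combined with an explicit evaluation of the leading coefficient via the Cauchy determinant identity $\det\!\big(\tfrac{1}{\nu_i+\nu_j}\big) = \prod_{i<j}\tfrac{(\nu_i-\nu_j)^2}{\text{stuff}} \cdot \prod_i \tfrac{1}{2\nu_i} \ne 0$. Once the leading behavior of both numerator and denominator in Cramer's rule is pinned down and seen to have matching $E_k$-degree with coefficients bounded in $t$, the boundedness of $\tilde f_k$ and $\tilde g_k$ on $\mathbb{R}^2$ follows, and I would note that the intermediate range of $x$ is harmless by continuity since $(I+T)(x)$ is invertible for every $x$ and all data depend smoothly on $(t,x)$. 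The main obstacle is the bookkeeping in the $x \to -\infty$ regime: controlling the cancellation of the exponentially large terms in the Cramer ratio, which is a determinant-identity computation rather than a soft argument.
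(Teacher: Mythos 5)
There is a genuine gap, and it sits exactly where you yourself flag the ``main obstacle''. The lemma asserts a bound uniform on all of $\mathbb{R}^2$. After inserting $\mu_k(t)=\mu_k^0e^{4\lambda_k\nu_k t}$, the linear system for $\tilde f,\tilde g$ depends on $(t,x)$ only through the $N$ quantities $a_k(t,x)=-2\sqrt2\,\mu_k(t)^{-1}e^{2\nu_k x}$, and since the $\lambda_k$ are distinct these can be made large or small \emph{independently} of one another (e.g.\ along a line $x=2\lambda_{k_0}t+\mathrm{const}$ one $a_k$ is frozen while the others go to $0$ or $+\infty$). So your claim that the coefficients are ``bounded in $t$'' is false as stated ($\mu_i\mu_j$ is exponentially unbounded in $t$), and a two-regime asymptotic analysis in $x\to\pm\infty$ plus ``the intermediate range is harmless by continuity'' cannot produce a bound uniform over $(t,x)\in\mathbb{R}^2$: the intermediate region is not compact in $(t,x)$ jointly, and where your asymptotic regimes kick in moves with $t$. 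To make the Cramer-rule route work you would have to control the ratio of determinants uniformly over \emph{all} regimes of the vector $(a_1,\dots,a_N)\in(0,\infty)^N$ — essentially a multi-parameter version of the Section~\ref{sect:scatasympt} computation — and the crucial cancellation of the exponentially large terms (your ``$\det=(\prod E_k^2)P(t)+\text{lower order}$'' with matching leading monomials in numerator and denominator) is only announced, not proved; as written the proposal does not close.

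For comparison, the paper avoids all of this with a soft positivity argument: dividing the $k$-th equations of \eqref{marchenko222}--\eqref{marchenko333} by the appropriate exponential turns the system into $(D_a+\Lambda)\tilde f=K_1$ and $(D_a+\Lambda)\tilde g=K_2$, where $D_a=\mathrm{diag}(a_1,\dots,a_N)$ has nonnegative entries (here the sign $\mu_k<0$ and the explicit form of $\mu_k(t)$ are the only facts used about the $(t,x)$-dependence) and $\Lambda_{kj}=2/(\nu_k+\nu_j)$ is the Gram matrix of the exponentials $e^{-\nu_j s}$ on $L^2(\mathbb{R}^+)$, hence coercive, $\langle\Lambda Z,Z\rangle\ge C|Z|^2$, precisely because the $\nu_k$ are pairwise distinct (linear independence of the $e^{-\nu_j s}$; this is where that hypothesis enters, rather than through a Cauchy-determinant nonvanishing inside an asymptotic expansion). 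Pairing the system with $\tilde f$ (resp.\ $\tilde g$) then gives $C\sum_k|\tilde f_k|^2\le\bigl|\sum_k\tilde f_k\bigr|$ and the analogous bound for $\tilde g$, i.e.\ a bound independent of $(t,x)$, with no asymptotic bookkeeping at all. If you want to salvage your approach you would need to carry out the determinant estimates uniformly in the $a_k$'s; the energy/coercivity argument is both shorter and the only place the hypotheses are actually needed.
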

\begin{proof}
Denote $\quad a_k(t,x) = -2\sqrt{2}\mu_k^{-1}(t)e^{2\nu_k x}.$
Then developing (\ref{marchenko222}) and (\ref{marchenko333}), we obtain  
\begin{equation*}
 \left\{\begin{array}{lr}
        a_k\tilde{f}_k + 2\sum_{j=1}^N\frac{\tilde{f}_j}{\nu_j+\nu_k} = 1,\quad k\in\{1,...,N\}&\\
        a_k\tilde{g}_k + 2\sum_{j=1}^N\frac{\tilde{g}_j}{\nu_j+\nu_k} = \lambda_k-i\nu_k,\quad k\in\{1,...,N\}.&\\ 
       \end{array}
       \right.
\end{equation*}
We rewrite the previous system in the form of the following two linear systems  
\begin{equation}\label{f_g_born}
 (D_a(t,x)+\Lambda)\tilde{f}(t,x) = K_1,\quad  (D_a(t,x)+\Lambda)\tilde{g}(t,x) = K_2,
\end{equation}
with $\tilde{f}=(\tilde{f}_1,...,\tilde{f}_N)^t,$ $\tilde{g}=(\tilde{g}_1,...,\tilde{g}_N)^t,$  $D_a= diag(a_1,...,a_N),$ $K_1 =(1,...,1)^t$ and\\
$K_2 =(\lambda_1-i\nu_1,...,\lambda_N-i\nu_N)^t.$ The matrix $\Lambda$ is coercive. In fact, for $Z\in \mathbb{C}^N,$ we have 
$$\frac{1}{\nu_j+\nu_k} = \int_0^{+\infty}e^{-(\nu_k+\nu_j)s}ds ,\quad \langle \Lambda Z, Z\rangle = \int_0^{+\infty}|\sum_{j=1}^Nz_je^{-\nu_j s}|^2ds.$$
It follows from the equivalence of the norms on $vect(e^{-\nu_1s},...,e^{-\nu_Ns})$ that there exists $C >0$ such that  
$$\int_0^{+\infty}|\sum_{j=1}^Nz_je^{-\nu_j s}|^2ds \geq C|Z|^2.$$
Thus, since the functions $a_k$ are non-negative, we find, using the $\mathbb{C}^N$ usual scalar product in (\ref{f_g_born}) for 
every $(t,x)\in \mathbb{R}^2$ with $(\tilde{f}_1(t,x),...\tilde{f}_N(t,x))^t$ and $(\tilde{g}_1(t,x),...\tilde{g}_N(t,x))^t$, that 
$$\sum_{k=1}^N \tilde{f}_k(t,x), \sum_{k=1}^N (\lambda_k-i\nu_k)\bar{\tilde{g}}_k(t,x) \in \mathbb{R}^+,$$
and for every $(t,x)\in \mathbb{R}^2$,  
$$C\sum_{k=1}^N |\tilde{f}_k(t,x)|^2 \leq \sum_{k=1}^N \tilde{f}_k(t,x),\quad C\sum_{k=1}^N |\tilde{g}_k(t,x)|^2 \leq \sum_{k=1}^N (\lambda_k-i\nu_k)\bar{\tilde{g}}_k(t,x).$$
This completes the proof.  
\end{proof}

Denote now $u_N(t,x)$ the solution of (GP) corresponding to $\beta \equiv 0.$ We know that  
$$u_N(t,x)= 1+2\sqrt{2}i\underline{\overline{\Upsilon}}_2(t,x,0).$$
Then 
\begin{equation}\label{u_u_N}
 u(t,x) = u_N(t,x) + 2\sqrt{2}i\underline{\overline{\Psi}}^r_2(t,x,0).
\end{equation}

\begin{lemma}
Under the assumptions of Proposition \ref{wp_Marchenko_2}, for fixed $t\in \mathbb{R}$, we have  
 $$\lim_{x\rightarrow \pm\infty}\underline{\Psi}^r(t,x,0)= 0.$$
\end{lemma}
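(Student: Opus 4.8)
The goal is to show that, for fixed $t$, the remainder $\underline{\Psi}^r(t,x,\cdot)$ tends to $0$ in $(H^n(\mathbb{R}^+))^2$ (hence in particular its value at $p=0$ tends to $0$) as $x\to\pm\infty$. The plan is to exploit the fixed-point characterization $\underline{\Psi}^r = \mathcal{M}_x\underline{\Psi}^r + R_x$ already established, together with the uniform contraction estimate $\vert\!\vert\!\vert\mathcal{M}_x(t)\vert\!\vert\!\vert \le \kappa < 1$ from Lemma \ref{op_cont}. From the fixed-point identity one gets the a priori bound
$$\|\underline{\Psi}^r(t,x,\cdot)\| \le \frac{1}{1-\kappa}\,\|R_x\|,$$
so everything reduces to showing $\|R_x\| = \|(2\sqrt{2}I_d+\underline{\Omega}_x)^{-1}(\mathcal{T}_x\underline{\Upsilon} + F_x)\| \to 0$ as $x\to\pm\infty$. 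Since $(2\sqrt{2}I_d+\underline{\Omega}_x)^{-1}$ is uniformly bounded (by $\tfrac{1}{2\sqrt{2}}$), it suffices to control $\mathcal{T}_x\underline{\Upsilon}(t,x,\cdot)$ and $F_x$ separately.

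First I would treat $F_x$. Its two components are $F_{22}(2x+\cdot)$ and $\sqrt{2}(F_{12}+iF'_{22})(2x+\cdot)$, i.e. translates by $2x$ of the fixed $H^n$-functions built from $c(\lambda)$; since these functions lie in $H^n(\mathbb{R})$ (by the hypothesis $\lambda^{n+2}\beta(\lambda)\in L^\infty$, exactly as in Lemma \ref{Op_f_lip}), their restrictions to $\mathbb{R}^+$ shifted by $2x$ have $H^n(\mathbb{R}^+)$-norm going to $0$ as $x\to+\infty$ by dominated convergence (the tail of an $L^2$ function), and likewise — reading the $2x$ shift in the other direction — as $x\to-\infty$. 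Wait: as $x\to-\infty$ the argument $2x+p$ ranges over $(2x,+\infty)$, which sweeps out all of $\mathbb{R}$, so $F_x$ does \emph{not} go to $0$ at $-\infty$; this is the asymmetry I must confront. The resolution is to combine $F_x$ with the contribution of $\mathcal{T}_x\underline{\Upsilon}$: the soliton profile $\underline{\Upsilon}(t,x,p) = \sum_k \tilde f_k(t,x)e^{-\nu_k x}e^{-\nu_k p}$ (using the notation $\tilde f_k,\tilde g_k$) has, by Lemma \ref{Upsilon_born}, coefficients $\tilde f_k,\tilde g_k$ that are \emph{bounded} on $\mathbb{R}^2$, but the prefactor in the convolution kernel $\widehat T(t, s+2x+p)$ again only decays on one side.

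So the cleaner route, which I would actually follow, is: for $x\to+\infty$ both $F_x\to0$ and $\mathcal{T}_x\underline{\Upsilon}\to0$ directly (the kernel $\widehat T(t,\cdot)\in H^n(\mathbb{R})$ evaluated at $s+2x+p$ with $s,p\ge0$ lives on $[2x,\infty)$, whose $L^2$ tail vanishes; and $\underline{\Upsilon}$ is uniformly bounded in $L^2(\mathbb{R}^+)$ since $\nu_k>0$), giving $\|R_x\|\to0$ hence $\underline{\Psi}^r\to0$. For $x\to-\infty$, I would instead use that as $x\to-\infty$ the \emph{soliton} $u_N$ itself tends to the constant background: the linear system \eqref{f_g_born} shows $a_k(t,x)=-2\sqrt2\,\mu_k^{-1}(t)e^{2\nu_k x}\to0$, so $(D_a+\Lambda)^{-1}\to\Lambda^{-1}$ and the $\tilde f_k,\tilde g_k$ have explicit limits; in fact $\underline{\overline{\Upsilon}}_2(t,x,0)\to0$, so $u_N(t,x)\to1$. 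One then observes that on the half-line $y\ge x$ with $x\to-\infty$, both $\Psi$ and $\Upsilon$ solve Marchenko systems whose data $F_{ij}(s+y)$ are evaluated at arguments $\ge 2x\to-\infty$; the full (perturbed) and unperturbed operators $\Omega_x,\mathcal{T}_x$ then differ only through the $c(\lambda)$-dependent pieces $d_0,e_0$, whose kernels $\widehat{d_0},\widehat{e_0}\in L^2(\mathbb{R})$. Subtracting the two fixed-point equations for $\underline{\Psi}^r$ and testing against itself, and using that the only "source" terms are the $c$-dependent convolutions applied to the \emph{bounded} functions $\underline{\Upsilon}+\underline{\Psi}^r$, one gets $\|\underline{\Psi}^r(t,x,\cdot)\|\le C\|\widehat{d_0},\widehat{e_0}\|_{L^2([2x,\infty))}\cdot(\|\underline{\Upsilon}\|+\|\underline{\Psi}^r\|)$; since the latter norms are uniformly bounded in $x$ (again because $\nu_k>0$ forces an $x$-uniform bound for $\underline{\Upsilon}$, contrary to worries) and the $L^2$-tail factor $\|\widehat{d_0}\|_{L^2([2x,\infty))}$ is not small at $-\infty$ — hmm.

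Let me state the honest plan rather than over-promise: the main obstacle is precisely this $x\to-\infty$ direction, where the shifts $2x+p$ do not push the fixed kernels off to their decaying tails. I expect the correct mechanism there to be the cancellation $u-u_N = 2\sqrt2\,i\,\underline{\overline\Psi}{}^r_2(t,x,0)$ of \eqref{u_u_N} combined with the previously-proved statement in Proposition \ref{wp_Marchenko_2} that $u-u_N\to0$ as $x\to\pm\infty$ — but since that statement is itself what the surrounding argument is building toward, the self-contained proof must instead re-derive the decay from the structure of the Marchenko kernels: for $x\to-\infty$ one changes variables $y=x+p$ and uses that the relevant contractions act on $L^2(y\ge x)$ with operator norm bounded by a constant times $\|\,\widehat{d_0},\widehat{e_0}\,\|_{L^\infty}$, while the inhomogeneous term, after the same change of variables, is $(2\sqrt2 I_d+\Omega_x)^{-1}$ applied to $F$ restricted to $y\ge x$; one then notes that the support condition $y\ge x$ is vacuous in the limit and invokes that $\Omega_x$ (the soliton part) \emph{absorbs} the non-decaying bulk — i.e. rewrite $\underline{\Psi}^r$ as $(2\sqrt2 I_d+\underline\Omega_x)^{-1}\mathcal{T}_x(\underline\Psi^r+\underline\Upsilon) + (2\sqrt2 I_d+\underline\Omega_x)^{-1}F_x$ and use that $(2\sqrt2 I_d+\underline\Omega_x)^{-1}F_x$ equals $\underline{\Psi}^{r,\mathrm{lin}}$, the solution of the \emph{linear} Marchenko system, which by an explicit Fourier computation (the $\Phi\mapsto \mathbf F^{-1}[\widehat\Phi\, e_k]$ representation from the proof of Lemma \ref{Op_f_lip}) decays because its Fourier symbol is $c(\lambda(\xi))$, which is $L^2$ in $\xi$, and the $x$-shift $e^{2i\xi x}$ makes the inverse Fourier transform at $p=0$ tend to $0$ by Riemann–Lebesgue. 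That Riemann–Lebesgue argument is symmetric in $x\to\pm\infty$ and is, I believe, the clean unifying step; the iteration then propagates it to the full nonlinear $\underline\Psi^r$ via the uniform contraction. The routine parts are the norm bookkeeping; the one genuine point of care is checking that $\underline\Upsilon$ (equivalently the $\tilde f_k,\tilde g_k$) is bounded uniformly in $x\in\mathbb{R}$, which is exactly Lemma \ref{Upsilon_born}.
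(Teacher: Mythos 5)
Your treatment of the limit $x\to+\infty$ is correct and is essentially the paper's argument: all kernels in $\underline{\Omega}_x$, $\mathcal{T}_x$ and $F_x$ are evaluated at arguments $\geq 2x$, so their tails (exponential for the soliton part, $L^2$/pointwise for the continuous part) vanish, the uniform bound on $\underline{\Psi}^r$ coming from the uniform contraction and Lemma \ref{Upsilon_born} closes the estimate, and $\underline{\Psi}^r(t,x,\cdot)\to 0$. Your identification of $x\to-\infty$ as the real difficulty is also accurate.

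However, at $x\to-\infty$ your proposal has a genuine gap. The reduction ``it suffices that $\|R_x\|\to0$'' fails there ($\|F_x\|_{L^2(\mathbb{R}^+)}$ tends to $\|\widehat{d_0}\|_{L^2(\mathbb{R})}\neq 0$), and the substitute you offer --- Riemann--Lebesgue decay of $(2\sqrt{2}I_d+\underline{\Omega}_x)^{-1}F_x$ at $p=0$, propagated ``via the uniform contraction'' --- does not work as stated: the contraction controls $H^n$ norms, and those norms do \emph{not} decay as $x\to-\infty$, so pointwise smallness of a source at $p=0$ cannot be transferred to the fixed point this way; moreover the soliton operator $\underline{\Omega}_x$, whose kernel carries the factor $e^{-2\nu_j x}\to+\infty$, reinjects the non-decaying bulk of $\underline{\Psi}^r$ (through the weighted integrals $\int_0^{+\infty}e^{-\nu_j s}\underline{\Psi}^r(x,s)\,ds$) into the value at $p=0$, and nothing in your argument controls these terms. (Your first instinct, to invoke $u-u_N\to 0$ from Proposition \ref{wp_Marchenko_2}, is circular, as you noted.) The paper's mechanism is different: for a sequence $x_k\to-\infty$ it extracts weak $H^n$ and local $L^2$ limits $\Psi_\ast$, $\Psi^1_\ast$ of $\underline{\Psi}^r(x_k,\cdot)$ and of its translate $\underline{\Psi}^r(x_k,\cdot-2x_k)$, observes that convergence of the $\underline{\Omega}_{x_k}$-term together with the blow-up factor $e^{-2\nu_j x_k}$ \emph{forces} $\int_0^{+\infty}e^{-\nu_j s}\Psi_\ast(s)\,ds=0$, i.e. $\Psi_\ast\perp \mathrm{vect}(e^{-\nu_1 p},\dots,e^{-\nu_N p})$, passes to the limit in the two coupled equations, and then uses the Hermitian symmetry of $\widehat{T}$ (this is exactly where the hypothesis that $c$ is real-valued enters --- absent from your plan) to get $\|\Psi_\ast\|_{L^2(\mathbb{R}^+)}=\|\Psi^1_\ast\|_{L^2(\mathbb{R})}$, which combined with the smallness estimate yields $\Psi_\ast\equiv 0$. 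This compactness/orthogonality/symmetry argument is the missing idea; without it your $x\to-\infty$ case does not go through.
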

\begin{proof}
For fixed $t\in \mathbb{R}$, $\underline{\Psi}^r \in \mathcal{C}^b(\mathbb{R}, (H^n(\mathbb{R}^+))^2).$ In fact, we have  
 $$\underline{\Psi}^r = \mathcal{M}_x\underline{\Psi}^r+ R_x,$$
 with $R_x= \mathcal{M}_x\underline{\Upsilon} + (2\sqrt{2}+\underline{\Omega}_x)^{-1}F_x.$
 In view of Lemma \ref{op_cont},  the operator $\mathcal{M}_x\in \mathcal{L}((H^n(\mathbb{R}^+))^2)$ is $\theta-$contraction.  
 Then for $x\in \mathbb{R},$ we have 
 \begin{eqnarray*}
  \|\underline{\Psi}^r(t,x,.)\|_{H^n} &\leq& \|\underline{\Psi}^r(t,0,.)\|_{H^n} + 
  \|\underline{\Psi}^r(t,x,.)-\underline{\Psi}^r(t,0,.) \|_{H^n}\\
  &\leq& (1+\theta)\|\underline{\Psi}^r(t,0,.)\|_{H^n} + \theta\|\underline{\Psi}^r(t,x,.)\|_{H^n}+  \|R_x(t,.)\|_{H^n}+\|R_0(t,.) \|_{H^n} ,\\  
  &\leq& (1+\theta)\|\underline{\Psi}^r(t,0,.)\|_{H^n} + \theta\|\underline{\Psi}^r(t,x,.)\|_{H^n}+  \theta \|\underline{\Upsilon}(t,x,.)\|_{H^n}+\|R_0(t,.) \|_{H^n}  \\
 && +\frac{1}{2\sqrt{2}}\|F_x(t,.)\|_{H^n} .\\ 
 \end{eqnarray*}
 The fact that $\underline{\Upsilon} \in \mathcal{C}^b(\mathbb{R}^2, (H^n(\mathbb{R}^+))^2)$  (Lemma \ref{Upsilon_born}) 
 and that $\|F_x(t,.)\|_{H^n}$ does not depend on $x$
 imply that $\underline{\Psi}^r \in \mathcal{C}^b(\mathbb{R}, (H^n(\mathbb{R}^+))^2).$ To prove the limit, we use  
(\ref{00wp}) so that 
\begin{equation}\label{lim_psi}
2\sqrt{2}\underline{\Psi}^r =-\underline{\Omega}_x \underline{\Psi}^r  +\mathcal{T}_x\underline{\Psi}^r + \mathcal{T}_x\underline{\Upsilon} + F_x.
\end{equation}
We need now to calculate the limit, for fixed $(t,x)\in \R^2$, of right hand side member of  de (\ref{lim_psi}) when 
$x$ tends to $+\infty.$ We start by the term 
$$\underline{\Omega}_x \underline{\Psi}^r(x,p) = \int_0^{+\infty}\Omega(2x+s+p)\underline{\Psi}^r(x,s)ds. $$
Since $\underline{\Psi}^r \in \mathcal{C}^b(\mathbb{R}^2, (L^2(\mathbb{R}^+))^2),$ we have 
$$|\underline{\Omega}_x \underline{\Psi}^r(x,p)|^2 \leq C \int_0^{+\infty}|\Omega(2x+s+p)|^2ds\leq C(t)\sum_{k=1}^{N} \int_0^{+\infty}e^{-2\nu_k(2x+s+p)}ds,$$
and $\lim_{x\rightarrow +\infty}\bar{\Omega}_x \underline{\Psi}^r(x,p) = 0.$ Next, we have  
$$\mathcal{T}_x \underline{\Psi}^r(x,p) = \int_0^{+\infty}\hat{T}(2x+s+p)\underline{\Psi}^r(x,s)ds,$$
and 
$$|\mathcal{T}_x \underline{\Psi}^r(x,p)| \leq \|\hat{T}(2x+\cdot+p)\|_{L^2(\mathbb{R}^+)}\|\underline{\Psi}^r(x,.)\|_{L^2(\mathbb{R^+})}\leq C(t)\|\hat{T}(2x+p+\cdot)\|_{L^2(\mathbb{R}^+)}.$$
The dominated convergence theorem implies that $\lim_{x\rightarrow +\infty}\|\hat{T}(2x+p+\cdot)\|_{L^2} = 0.$ 
Similarly, we find that  
$\lim_{x\rightarrow +\infty}\mathcal{T}_x \underline{\Upsilon}(x,p) = 0.$ Finally, we have  
$$\lim_{x\rightarrow +\infty} F_x(x,p)= \lim_{x\rightarrow +\infty} \left(\begin{matrix}\widehat{d_0}(2x+p)\\ \widehat{e_0}(2x+p)\end{matrix}\right) = 
\left(\begin{matrix}0\\ 0\end{matrix}\right).$$ 
This proves that $\lim_{x\rightarrow +\infty}\underline{\Psi}^r(t,x,p)= 0.$ The remaining limit is less obvious.  
To prove it, we will prove that for every sequence $(x_k)_{k\in \mathbb{N}}$ of elements in  
$\mathbb{R}^-$ which tends to $-\infty$ when $k\rightarrow -\infty,$ there exists a subsequence 
$(x_{\gamma(k)})_{k\in \mathbb{N}}$ such that the sequence $(\underline{\Psi}(t,x_{\gamma(k)},.))_{k\in \mathbb{N}}$ converges to zero in 
$L^2_{loc}(\mathbb{R}^+).$ \\
Let $(x_k)_{k\in \mathbb{N}}$ be a sequence of negative numbers which tends to  $-\infty$ when $k\rightarrow -\infty.$ Let 
$(\Psi_k^1)_{k\in \mathbb{N}}$ be the function sequence defined by 
$$\Psi_k^1:  [2x_k, +\infty[\rightarrow \mathbb{C}^2, \Psi_k^1(p)= \underline{\Psi}^r(x_k,p-2x_k).$$
Since, for fixed $t$,  $\underline{\Psi}^r(t,.,.) \in \mathcal{C}^b(\mathbb{R}, (H^n(\mathbb{R}^+))^2)$, 
we find that there exist two functions $\Psi_\ast^1 \in (L^2(\mathbb{R}))^2$,
 $\Psi_\ast \in (H^n(\mathbb{R}^+))^2$ both depending on $t$ and a strictly increasing function   
$\gamma: \mathbb{N}\rightarrow \mathbb{N}$ such that 
$$\underline{\Psi}^r(x_{\gamma(k)},.) \rightarrow \Psi_\ast \quad \text{in}\quad (L^2_{loc}(\mathbb{R}^+))^2\quad \text{and}\quad 
\underline{\Psi}^r(x_{\gamma(k)},.) \rightharpoonup \Psi_\ast \quad \text{in}\quad (H^n(\mathbb{R}^+))^2,$$
$$\Psi^1_{\gamma(k)} \rightarrow \Psi^1_\ast \quad \text{in}\quad (L^2_{loc}(\mathbb{R}))^2\quad \text{and}\quad 
\Psi^1_{\gamma(k)} \rightharpoonup \Psi^1_\ast \quad \text{in}\quad (L^2(\mathbb{R}))^2.$$
The two functions $\underline{\Psi}(x_{\gamma(k)},.), \Psi^1_{\gamma(k)}$ satisfy
\begin{eqnarray}\label{0000wp}
 2\sqrt{2}\underline{\Psi}^r(x_{\gamma(k)},p) &=& \int_{2x_{\gamma(k)}}^{+\infty}\hat{T}(p+s)\Psi^1_{\gamma(k)}(s)ds\notag\\ 
&& +(\mathcal{T}_{x_{\gamma(k)}}\underline{\Upsilon} + F_{x_{\gamma(k)}}-\underline{\Omega}_{x_{\gamma(k)}} \underline{\Psi}^r )(x_{\gamma(k)},p),
\end{eqnarray}
\begin{eqnarray}\label{0000wp1}
 2\sqrt{2}\Psi^1_{\gamma(k)}(p) &=& \int_0^{+\infty}\hat{T}(p+s)\underline{\Psi}(x_{\gamma(k)},s)ds\notag\\ 
&& +(\mathcal{T}_{x_{\gamma(k)}}\underline{\Upsilon} + F_{x_{\gamma(k)}}-\underline{\Omega}_{x_{\gamma(k)}} \underline{\Psi}^r )(x_{\gamma(k)},p-2jx_{\gamma(k)}),
\end{eqnarray}
where
$$\underline{\Omega}_{x_{\gamma(k)}}\underline{\Psi}^r(x_{\gamma(k)},p) = 
\sum_{j=1}^Ne^{-2\nu_jx_{\gamma(k)}}A_j\left(\int_0^{+\infty}e^{-\nu_js}\underline{\Psi}^r(x_{\gamma(k)},s)ds\right)e^{-\nu_jp},$$
with $A_j = -\mu_j\left(\begin{matrix}1 & \sqrt{2}(\lambda_j+i\nu_j)\\\sqrt{2}(\lambda_j-i\nu_j) & 1 \end{matrix}\right).$
The term $\underline{\Omega}_{x_{\gamma(k)}}\underline{\Psi}^r$ converges in $(L^2_{loc}(\mathbb{R}^+))^2$
and each of its components belongs to the closed vectorial space  $ vect(e^{-\nu_1p},...,e^{-\nu_Np}).$ Then 
then there exists a family $\{c_i\}_{i=1,..,N}\in (\mathbb{C}^2)^N$ such that  
$$\bar{\Omega}_{x_{\gamma(k)}}\underline{\Psi}^r \rightarrow \sum_{j=1}^Nc_je^{-\nu_jp}\quad \text{in} \quad (L^2_{loc}(\mathbb{R}^+))^2.$$
In view of equivalence of norms 
\footnote{It suffices to take into account the two norms  $\|.\|_{L^2(I)}$ and $N_c$ defined as follows 
$$ N_c(\Phi) = \max_j(|c_j|),\quad \Phi(p) = \sum_{j=1}^Nc_je^{-\nu_jp} $$ }
on $vect(e^{-\nu_1p},...,e^{-\nu_Np}),$  we have  
$$ \lim_{k\rightarrow +\infty}\left(e^{-2\nu_jx_{\gamma(k)}}A_j\left(\int_0^{+\infty}e^{-\nu_js}\underline{\Psi}^r(x_{\gamma(k)},s)ds\right)\right) = c_j,\quad j=1,...,N,$$  
where we obtain 
$$\int_0^{+\infty}e^{-\nu_js}\Psi_\ast(s)ds= \lim_{k\rightarrow +\infty}\left(\int_0^{+\infty}e^{-\nu_js}\underline{\Psi}^r(x_{\gamma(k)},s)ds\right) = 0,\quad j=1,...,N,$$ 
Taking the limit in (\ref{0000wp}) and (\ref{0000wp1}) prove that  
\begin{equation}\label{00001wp}
2\sqrt{2}\Psi_\ast(p) = \int_{-\infty}^{+\infty}\hat{T}(p+s)\Psi_\ast^1(s)ds + \sum_{j=1}^Nc_je^{-\nu_jq}.
\end{equation}
\begin{equation}\label{00001wp1}
2\sqrt{2}\Psi_\ast^1(p) = \int_0^{+\infty}\hat{T}(p+s)\Psi_\ast(s)ds.
\end{equation}
Since $\Psi_\ast \in (vect(e^{-\nu_1p},...,e^{-\nu_Np}))^\bot$ and $\hat{T}$ is Hermitian,
\footnote{Since $c$ is assumed to be real function, we have $\bar{\beta}(t,\lambda(\xi)) =\beta(t,\lambda(-\xi)),$ 
then  $F_{22}$ $F_{12}$ are real-valued functions and  $\widehat{T}$ is  Hermitian.}  
we obtain, taking  the $L^2(\mathbb{R}^+)-$scalar product 
with $\Psi_\ast$ in (\ref{00001wp}) 
\begin{eqnarray*}
2\sqrt{2}\|\Psi_\ast\|^2_{L^2(\mathbb{R}^+)}&=& \int_0^{+\infty}\int_{-\infty}^{+\infty}\hat{T}(p+s)\Psi_\ast^1(s)\cdot\overline{\Psi_\ast}(p)dsdp\\
&=& \int_{-\infty}^{+\infty}\int_0^{+\infty}\overline{\hat{T}}(p+s)\overline{\Psi_\ast}(p)\cdot\Psi_\ast^1(s)dsdp\\
&=& 2\sqrt{2}\|\Psi_\ast^1\|^2_{L^2(\mathbb{R})}.
\end{eqnarray*}
On the other hand, we have  
$$\|\Psi_\ast\|_{L^2(\mathbb{R}^+)}\leq \frac{1}{2\sqrt{2}}\|\Psi_\ast^1\|_{L^2(\mathbb{R})}.$$ Then we finally get 
$$\Psi_\ast \equiv 0.$$
This completes the proof. 
\end{proof}


\section{Asymptotic of $N$-soliton solutions in long time: proof of Theorem \ref{GP_copm}} \label{sect:scatasympt}

We present in this section the proof of Theorem \ref{GP_copm}, which, in view of Proposition \ref{wp_Marchenko_2}, 
focuses on the analysis of solutions constructed in  Section \ref{sec_N_solit}.
Keeping the notations introduced in Section \ref{sec_N_solit},  we already proved that $(I+T)F=C$ and that 
$$QTQ^{-1}=
\frac{1}{2\sqrt{2}}\left( \begin{matrix}
E& B\\
0    & 0\\
\end{matrix} \right ),
$$
with $E=A-DB^\ast$, so that the two matrices $I+T$ and
$M=
\frac{1}{2\sqrt{2}}\left( \begin{matrix}
E+2\sqrt{2}I_N& B\\
0    & 2\sqrt{2}I_N \\
\end{matrix} \right )
$
are similar and we have $(I+T)^{-1}=Q^{-1}M^{-1}Q$.  It is easily verified that
$$
M^{-1}=2\sqrt{2}\left( \begin{matrix}
(E+2\sqrt{2}I_N)^{-1}&-\frac{1}{2\sqrt{2}}(E+2\sqrt{2}I_N)^{-1}B\\
0                    & \frac{1}{2\sqrt{2}}I_N  \\
\end{matrix} \right ).
$$
Hence, we have 
$$
(I+T)^{-1}=2\sqrt{2}\left( \begin{matrix}
(E+2\sqrt{2}I_N)^{-1}(I_N-\frac{1}{2\sqrt{2}}BD)                           &-\frac{1}{2\sqrt{2}}(E+2\sqrt{2}I_N)^{-1}B\\
D[-(E+2\sqrt{2}I_N)^{-1}(I_N-\frac{1}{2\sqrt{2}}BD)+\frac{1}{2\sqrt{2}}I_N] & \frac{1}{2\sqrt{2}}D(E+2\sqrt{2}I_E)^{-1}B+\frac{1}{2\sqrt{2}}I_N  \\
\end{matrix} \right ).
$$
Consequently, we have 
$$
F_2=\left(\begin{matrix}g_1\\\vdots\\ g_N\end{matrix}\right)=D(E+2\sqrt{2}I_N)^{-1}(-C_1+\frac{1}{2\sqrt{2}}B(DC_1+C_2))+\frac{1}{2\sqrt{2}}(DC_1+C_2),
$$
with $C_1=-\left(\begin{matrix}\mu_1e^{-\nu_1x}\\\vdots\\\mu_Ne^{-\nu_Nx}\end{matrix}\right)$ and
$C_2=-\left(\begin{matrix}\beta_1e^{-\nu_1x}\\\vdots\\ \beta_Ne^{-\nu_Nx}\end{matrix}\right).$
Thus, $DC_1+C_2=0$ and we finally get 
\begin{equation}\label{sysN}
F_2=-D(E+2\sqrt{2}I_N)^{-1}C_1.
\end{equation}

Recall that   $$\mu_k(t)=\mu_k(0)\exp(4\lambda_k\nu_kt),\quad \forall k \in \{1,2,...,N\}.$$
Before discussing the general case of $N$-solitons, we discuss the two simple cases where $N=1$ and $N=2.$
For  $N=1$, a simple calculation proves that 
$$\Psi_{12}(t,x,y)=\frac{\sqrt{2}\nu(\lambda-i\nu)e^{\nu(x-y)}}{1-2\sqrt{2}\nu\mu^{-1}(t)e^{2\nu x}}.$$
Then the $1-$soliton solution corresponding to data $\{\lambda,\mu(0)\}$ is   
\begin{equation}\label{sol1stn}
u_1(t,x)=1+2\sqrt{2}i\overline{\Psi}(t,x,x)=1+\frac{4\nu(i\lambda-\nu)}{1-2\sqrt{2}\nu\mu^{-1}(0)e^{2\nu(x-2\lambda t)}}\triangleq U(x-2\lambda t),
\end{equation}
where  $U(s)=1+\frac{4\nu(i\lambda-\nu)}{1-2\sqrt{2}\nu\mu^{-1}(0)e^{2\nu s}}$ and $\nu=\sqrt{\frac{1}{2}-\lambda^2}$.
The choice $\mu(0)=-1$ and $\lambda=\frac{1}{2}$ produces the solution
\begin{eqnarray}
u_1(t,x)=U(x-t)=\frac{i+\sqrt{2}e^{(x-t)}}{1+\sqrt{2}e^{(x-t)}}.
\end{eqnarray}
The $2-$soliton solution, $u_2$, corresponding to choices $\lambda_1=-\frac{1}{2},\lambda_2=\frac{1}{2}$ and $\mu_1(0)=\mu_2(0)=-1$ is 
$$u_2(t,x)=\frac{5\cosh(x)+3\sinh(x)+4\sqrt{2}i\sinh(t)}{5\cosh(x)+3\sinh(x)+4\sqrt{2}\cosh(t)}.$$
Taking into account the frame (in translation) with speed $2\lambda_2$ defined by the variable $\eta=x-2\lambda_2t=x-t$, we have 
$$\lim_{t\rightarrow +\infty}u_2(t,\eta+t)=\frac{i+\sqrt{2}e^\eta}{1+\sqrt{2}e^\eta}=U(\eta),$$
$$\lim_{t\rightarrow -\infty}u_2(t,\eta+t)=-i\frac{i+2\sqrt{2}e^\eta}{1+2\sqrt{2}e^\eta}=-iU(\eta+\log(2)),$$
i.e. the solution $u_2$ behaves (asymptotically when $t \rightarrow \pm \infty$) 
as a progressive wave of speed  $2\lambda_2$.

Now let us come back to the general case ($N\in \N$), for which we can not establish an explicit formula for the solution $u(t,x)$. 
To this end, we rewrite
\begin{eqnarray*}
u(t,x)&=&1+2\sqrt{2}i\overline{\Psi}_{12}(t,x,x)\\
&=&1+2\sqrt{2}i\sum_{j=1}^{N}\bar{g}_j(t,x)\exp(-\nu_jx)\\
&=&1+2\sqrt{2}i\sum_{j=1}^{N}\overline{G}_j(t,x).
\end{eqnarray*}
Hence, we need to study the behavior of the sum $\sum_{j=1}^{N}G_j$. To this end rewrite the system (\ref{sysN}) as follows
\begin{equation}\label{sysNbis}
(E+2\sqrt{2}I_N)D^{-1}F_2=-C_1,
\end{equation}
with
$$(E)_{kj}=-(\mu_k+\bar{\beta}_k\frac{\beta_j}{\mu_j})\frac{\exp(-(\nu_k+\nu_j)x)}{\nu_k+\nu_j},$$
$$(D)_{kj}=-\delta_{kj}\frac{\beta_j}{\mu_j}\quad 1\leq k,j\leq N,$$
where  $$\delta_{kj}=\left\{\begin{array}{lr}
              1,&\text{if}\quad k=j\\
              0,& \text{if} \quad k\neq j.
             \end{array}
\right.$$
The system (\ref{sysNbis}) can be rewritten as follows
$$\sum_{j=1}^{N}\left(-2\sqrt{2}\delta_{kj}\frac{\mu_j}{\beta_j}g_j+(\bar{\beta}_k+\mu_k\frac{\mu_j}{\beta_j})\frac{e^{-(\nu_k+\nu_j)x}}{\nu_k+\nu_j}g_j\right)
=\mu_ke^{-\nu_kx},\quad k=1,2,...,N.$$
which implies 
$$-\frac{2\sqrt{2}}{\beta_k}\exp(2\nu_kx)G_k(t,x)+\sum_{j=1}^{N}\left(\frac{\bar{\beta}_k}{\mu_k}+\frac{\mu_j}{\beta_j}\right)\frac{G_j(t,x)}{\nu_k+\nu_j}=1,$$
with $\beta_k=\sqrt{2}\mu_k(\lambda_k-i\nu_k)$, $\mu_k(t)=\mu_k(0)\exp(4\lambda_k\mu_kt)$
and $\frac{\bar{\beta}_k}{\mu_k}+\frac{\mu_j}{\beta_j}=\sqrt{2}(\lambda_k+\lambda_j+i(\nu_k+\nu_j)).$
Finally, we obtain
\begin{equation}\label{sysNter}
-\frac{2\mu_k(0)^{-1}}{\lambda_k-i\nu_k}\exp(2\nu_k(x-2\lambda_kt))G_k(t,x)+\sqrt{2}\sum_{j=1}^{N}\left(\frac{\lambda_k+\lambda_j}{\nu_k+\nu_j}+i\right)G_j(t,x)=1\quad k=1,2,...,N.
\end{equation}
Let $k_0\in \{1,...,N\}$. We define the frame (in translation)  with speed $2\lambda_{k_0}$ by the variable
$$\eta=x-2\lambda_{k_0}t.$$
We will analyze the asymptotic in long time. Note that 
\begin{eqnarray*}
-\frac{2\mu_k(0)^{-1}}{\lambda_k-i\nu_k}\exp(2\nu_k(x-2\lambda_kt))&=&-\frac{2\mu_k(0)^{-1}}{\lambda_k-i\nu_k}\exp(2\nu_k(x-2\lambda_{k_0}t+2\lambda_{k_0}t-2\lambda_kt))\\
&=&-\frac{2\mu_k(0)^{-1}}{\lambda_k-i\nu_k}\exp(2\nu_k\eta)\exp(4\nu_k(\lambda_{k_0}-\lambda_k)t)\\
&=&H_k(\eta)\exp(4\nu_k(\lambda_{k_0}-\lambda_k)t).
\end{eqnarray*}
The system (\ref{sysNter})  can be  transformed into 
$$H_k(\eta)\exp(4\nu_k(\lambda_{k_0}-\lambda_k)t)G_k(t,x)+\sqrt{2}\sum_{j=1}^{N}(\frac{\lambda_k+\lambda_j}{\nu_k+\nu_j}+i)G_j(t,x)=1,\quad k=1,2,...,N.$$
We take the limit when $t\rightarrow -\infty$ for fixed $\eta$. This leads, denoting 
$G_k=\lim_{t\rightarrow -\infty}G_k(t,\eta+2\lambda_{k_0}t)$ to
\begin{equation}\label{sysk0}
  \left\{
  \begin{array}{lr}
  \sum_{j=1}^{k_0}(\frac{\lambda_k+\lambda_j}{\nu_k+\nu_j}+i)G_j=1-\delta_{kk_0}H_{k_0}(\eta)G_{k_0},& k=1,...,k_0 \\
 G_k=0,& k=k_0+1,...,N.
  \end{array}
  \right.
\end{equation}
We introduce the matrices 
$$K=\sqrt{2}\left(\frac{\lambda_k+\lambda_j}{\nu_k+\nu_j}+i\right)_{1\leq k,j\leq k_0},\quad K^-=\sqrt{2}\left(\frac{\lambda_k+\lambda_j}{\nu_k+\nu_j}+i\right)_{1\leq k,j\leq k_0-1},$$
and $\widetilde{K}$ obtained from $K$ by replacing the last column by a column of ones.
Using the Cramer formula  for (\ref{sysk0}),  we have 
\begin{eqnarray*}
G_k&=&\frac{\sum_{l=1}^{k_0}(1-\delta_{lk_0}H_{k_0}G_{k_0})K_{lk}}{\det(K)}\\
&=&\frac{\sum_{l=1}^{k_0}K_{lk}-H_{k_0}G_{k_0}K_{k_0k}}{\det(K)},\quad k=1,...,k_0,
\end{eqnarray*}
where the  $K_{lk}$ refer to the cofactors of $K$. For $k=k_0$, we also have 
$$G_{k_0}=\frac{\det(\widetilde{K})}{\det(K)+\det(K^-)H_{k_0}},$$
so that 
$$G_k=\frac{\sum_{l=1}^{k_0}K_{lk}}{\det(K)}-\frac{\det(\widetilde{K})H_{k_0}}{\det(K)^2+\det(K^-)\det(K)H_{k_0}}K_{k_0k}.$$
Summing the last  equation on $k$, we obtain 
$$\sum_{k=1}^{k_0}G_k=\frac{\sum_{k=1}^{k_0}\sum_{l=1}^{k_0}K_{lk}}{\det(K)}-\frac{\det(\widetilde{K})H_{k_0}}{\det(K)^2+\det(K^-)\det(K)H_{k_0}}\sum_{k=1}^{k_0}K_{k_0k}.$$
Thus, we finally get 
$$\sum_{k=1}^{k_0}G_k=\frac{\sum_{1\leq k,l\leq k_0}K_{lk}}{\det(K)}-\frac{\det(\widetilde{K})^2H_{k_0}}{\det(K)^2+\det(K^-)\det(K)H_{k_0}},$$
and
\begin{equation}
\lim_{t\rightarrow-\infty}u(t,\eta+2\lambda_{k_0}t)=
\frac{\beta(\beta+\alpha)-\frac{2\mu_{k_0}(0)^{-1}}{\lambda_{k_0}+i\nu_{k_0}}(\delta(\beta+\alpha)-\gamma)e^{2\nu_{k_0}\eta}}
{\beta^2-\frac{2\mu_{k_0}(0)^{-1}}{\lambda_{k_0}+i\nu_{k_0}}\delta\beta e^{2\nu_{k_0}\eta}},
\end{equation}
with
\begin{equation*}\left\{\begin{array}{lr}
\alpha=2\sqrt{2}i\sum_{1\leq k,l\leq k_0}\overline{K}_{lk},&\\
\beta=\overline{\det(K)},&\\
\gamma=2\sqrt{2}i\overline{\det^2(\widetilde{K})},&\\
\delta=\overline{\det(K^-)}.&
\end{array}\right.
\end{equation*}
\begin{proposition}\label{Nstn-1stn}
There exist $A^-\in\mathbb{C}$ and $\eta_{k_0}^- \in \mathbb{R}$ such that 
$$|A^-|=1,$$
$$\lim_{t\rightarrow -\infty}u(t,\eta+2\lambda_{k_0}t)=A^-U(\eta-\eta_{k_0}^-),$$
where $U$ is the function defined in (\ref{sol1stn}) which corresponds to $\mu_{k_0}(0)$ and $\lambda_{k_0}$.
\end{proposition}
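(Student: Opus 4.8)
The plan is to observe that the limit just computed and the one‑soliton profile $U$ are both fractional‑linear in the single variable $w:=e^{2\nu_{k_0}\eta}$, so that identifying them amounts to one algebraic relation among $\alpha,\beta,\gamma,\delta$; once that relation holds, $|A^-|=1$ and $\eta_{k_0}^-\in\mathbb{R}$ drop out of explicit formulas.

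First I would rewrite the displayed limit as $\lim_{t\to-\infty}u(t,\eta+2\lambda_{k_0}t)=\dfrac{c_1+c_2w}{c_3+c_4w}$ (coefficients in $w$), with $c_1=\beta(\beta+\alpha)$, $c_3=\beta^2$, $c_2=-\kappa\bigl(\delta(\beta+\alpha)-\gamma\bigr)$, $c_4=-\kappa\delta\beta$, $\kappa:=2\mu_{k_0}(0)^{-1}/(\lambda_{k_0}+i\nu_{k_0})$, and, using $\lambda_{k_0}^2+\nu_{k_0}^2=\tfrac12$ (so that $1+4\nu_{k_0}(i\lambda_{k_0}-\nu_{k_0})=2(\lambda_{k_0}+i\nu_{k_0})^2$), put (\ref{sol1stn}) into the normalized form
$$U(s)=\frac{2(\lambda_{k_0}+i\nu_{k_0})^2-b\,e^{2\nu_{k_0}s}}{1-b\,e^{2\nu_{k_0}s}},\qquad b:=2\sqrt2\,\nu_{k_0}\,\mu_{k_0}(0)^{-1}<0 .$$
Since $A^-U(\eta-\eta_{k_0}^-)=\bigl(2A^-(\lambda_{k_0}+i\nu_{k_0})^2-A^-\rho\,w\bigr)/(1-\rho\,w)$ with $\rho:=b\,e^{-2\nu_{k_0}\eta_{k_0}^-}$, these two fractional‑linear functions of $w$ agree if and only if $A^-=c_2/c_4$, $\rho=-c_4/c_3$, and the compatibility relation $c_1c_4=2(\lambda_{k_0}+i\nu_{k_0})^2\,c_2c_3$ holds.

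The core is this compatibility relation, and here the structure of $K$ enters. Setting $\theta_j:=\arccos(\sqrt2\,\lambda_j)$, one has $\lambda_j=\cos\theta_j/\sqrt2$, $\nu_j=\sin\theta_j/\sqrt2$ and, by sum‑to‑product, $(\lambda_k+\lambda_j)/(\nu_k+\nu_j)=\cot\frac{\theta_k+\theta_j}{2}$, so $K=\sqrt2\,(C+i\,\mathbf{1}\mathbf{1}^t)$ with $C=\bigl(\cot\frac{\theta_k+\theta_j}{2}\bigr)_{k,j}$ real symmetric and $\mathbf{1}=(1,\dots,1)^t$. The matrix–determinant lemma gives $\det(K-2\sqrt2\,i\,\mathbf{1}\mathbf{1}^t)=\det K-2\sqrt2\,i\,S$ with $S$ the sum of all cofactors of $K$; since $K-2\sqrt2\,i\,\mathbf{1}\mathbf{1}^t=\overline K$, this forces $S=(\det K-\overline{\det K})/(2\sqrt2\,i)\in\mathbb{R}$ and, crucially, $\beta+\alpha=\det K$, whence $c_1=|\det K|^2$ and $c_3=\overline{\det K}^{\,2}$. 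Feeding this in, together with $\gamma=2\sqrt2\,i(\det\widetilde K)^2$ (where $\det\widetilde K\in\mathbb{R}$, because column operations turn $\widetilde K$ into the real matrix $[\sqrt2\,C_{\cdot,1},\dots,\sqrt2\,C_{\cdot,k_0-1},\mathbf{1}]$) and $1-2(\lambda_{k_0}+i\nu_{k_0})^2=-4i\nu_{k_0}(\lambda_{k_0}+i\nu_{k_0})$, the compatibility relation becomes
$$\nu_{k_0}\,\det K\;\overline{\det K^-}=\sqrt2\,(\lambda_{k_0}+i\nu_{k_0})\,(\det\widetilde K)^2 .$$
Writing $\det K=(\sqrt2)^{k_0}\det C\,(1+i\sigma)$, $\det\widetilde K=(\sqrt2)^{k_0-1}\det C\,p$ and $\det K^-=(\det K)(K^{-1})_{k_0k_0}=(\sqrt2)^{k_0-1}\det C\,\bigl(q+i(q\sigma-p^2)\bigr)$, with the real numbers $\sigma=\mathbf{1}^tC^{-1}\mathbf{1}$, $p=(C^{-1}\mathbf{1})_{k_0}$, $q=(C^{-1})_{k_0k_0}$ (Cramer's rule, the relation $\det K^-=(\det K)(K^{-1})_{k_0k_0}$ for the leading principal minor, and the Sherman–Morrison formula for $(C+i\mathbf{1}\mathbf{1}^t)^{-1}$), one cancels the common real factor $(\sqrt2)^{2k_0-1}(\det C)^2$ and, separating real and imaginary parts, is left with the single real identity
$$\nu_{k_0}\,q\,(1+\sigma^2)=(\lambda_{k_0}+\nu_{k_0}\sigma)\,p^2 .$$

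This real identity is the hard part: I would derive it from the explicit Cauchy‑type formulas for the entries of $C^{-1}$ in terms of the nodes $\theta_1,\dots,\theta_{k_0}$ (equivalently, from the partial‑fraction identities attached to those nodes), the hypothesis $\lambda_1<\dots<\lambda_{k_0}$ ensuring $\det C\neq0$ and $\det\widetilde K\neq0$. Once it is in hand, tracing back gives $A^-=c_2/c_4=\dfrac{\det K}{\overline{\det K}}\cdot\dfrac{\lambda_{k_0}-i\nu_{k_0}}{\lambda_{k_0}+i\nu_{k_0}}$, a product of two numbers of modulus $1$, so $|A^-|=1$; and $-c_4/c_3=b\,e^{-2\nu_{k_0}\eta_{k_0}^-}$ yields $e^{-2\nu_{k_0}\eta_{k_0}^-}=\dfrac{(\det\widetilde K)^2}{\nu_{k_0}^2\,|\det K|^2}>0$, i.e. $\eta_{k_0}^-=\dfrac1{\nu_{k_0}}\log\dfrac{\nu_{k_0}\,|\det K|}{|\det\widetilde K|}\in\mathbb{R}$, which is exactly the claim. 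Apart from that final identity, the one step worth highlighting is the matrix–determinant‑lemma computation $\beta+\alpha=\det K$: it collapses $c_1,c_3$ to $|\det K|^2$ and $\overline{\det K}^{\,2}$ and thereby makes the unit‑modulus assertion transparent.
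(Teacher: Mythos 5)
Your reduction is sound and, up to complex conjugation, it reproduces the paper's own scheme: matching the two fractional--linear functions of $w=e^{2\nu_{k_0}\eta}$ is exactly the paper's three claims (i)--(iii), and your derivation of $\alpha+\beta=\det K$ from the matrix--determinant lemma (so that $c_1=|\det K|^2$, $c_3=\overline{\det K}^{\,2}$ and the modulus-one factor is $\det K/\overline{\det K}$) is precisely the argument of Lemma \ref{lem-mat-cof}, as is your column-operation proof that $\det\widetilde K\in\mathbb{R}$. The genuine gap is the compatibility relation $\nu_{k_0}\det K\,\overline{\det K^-}=\sqrt2\,(\lambda_{k_0}+i\nu_{k_0})(\det\widetilde K)^2$ (the conjugate of the paper's identity in step (ii)): this is the heart of the proposition, and you do not prove it. You only translate it, via Sherman--Morrison applied to $K=\sqrt2\,(C+i\mathbf{1}\mathbf{1}^t)$, into the real identity $\nu_{k_0}q(1+\sigma^2)=(\lambda_{k_0}+\nu_{k_0}\sigma)p^2$ and then declare that you ``would derive it from Cauchy-type formulas for $C^{-1}$''. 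That unproved identity carries all the content: without it you have only shown that \emph{if} it holds, then $|A^-|=1$ and $\eta_{k_0}^-\in\mathbb{R}$. The paper, by contrast, proves the needed determinant identities outright, through the substitution $\lambda_j=\frac{1}{\sqrt2}\cos\theta_j$, $\nu_j=\frac{1}{\sqrt2}\sin\theta_j$, which turns $(K)_{kj}$ into $e^{i(\theta_k+\theta_j)/2}/\sin\frac{\theta_k+\theta_j}{2}$, and explicit column subtractions giving $\det\overline K$, $\det\widetilde K$, $\det K^-$ as explicit nonzero multiples of one another (formulas (\ref{det1})--(\ref{det2})); this simultaneously yields the explicit values of $A^-$ and $\eta_{k_0}^-$ needed later for Theorem \ref{GP_copm} and the nonvanishing of all determinants involved.

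There is also a secondary defect in your route: the normal form $\det K=(\sqrt2)^{k_0}\det C\,(1+i\sigma)$ and the use of $C^{-1}$ require $C$ invertible, and your assertion that $\lambda_1<\dots<\lambda_{k_0}$ guarantees $\det C\neq0$ is unjustified and false in general: already for $k_0=1$ and $\lambda_1=0$ one has $C=(\cot\frac{\pi}{2})=(0)$, although $K=\sqrt2\,i$ is invertible. So even the passage to your final real identity needs a separate nondegeneracy or perturbation argument; the paper avoids this entirely by manipulating $K$, $\widetilde K$ and $K^-$ directly, never inverting the real part. Likewise $\det\widetilde K\neq0$ (needed for $\eta_{k_0}^-$ to be finite) is asserted rather than proved, whereas in the paper it follows from the explicit product formulas.
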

\begin{proof}
Recall first that $$U(\eta)=\frac{4\nu_{k_0}(i\lambda_{k_0}-\nu_{k_0})+1-2\sqrt{2}\nu_{k_0}\mu_{k_0}^{-1}(0)e^{2\nu_{k_0}\eta}}{1-2\sqrt{2}\nu_{k_0}\mu_{k_0}^{-1}(0)e^{2\nu_{k_0}\eta}},$$
and that 
\begin{equation}
\lim_{t\rightarrow-\infty}u(t,\eta+2\lambda_{k_0}t)=
A^-\frac{4\nu_{k_0}(i\lambda_{k_0}-\nu_{k_0})+1-\frac{2\mu_{k_0}(0)^{-1}(4\nu_{k_0}(i\lambda_{k_0}-\nu_{k_0})+1)}{\lambda_{k_0}+i\nu_{k_0}}(\delta/\beta-\gamma/\beta(\alpha+\beta))e^{2\nu_{k_0}\eta}}
{1-\frac{2\mu_{k_0}(0)^{-1}}{\lambda_{k_0}+i\nu_{k_0}}\frac{\delta}{\beta} e^{2\nu_{k_0}\eta}},
\end{equation}
with $A^-=\frac{\alpha+\beta}{\beta(4\nu_{k_0}(i\lambda_{k_0}-\nu_{k_0})+1)}$.
Then It will be required to prove:\\
(i)\quad$|A^-|=1$.\\
(ii)\quad$\delta=[\delta-\frac{\gamma}{\alpha+\beta}](4\nu_{k_0}(i\lambda_{k_0}-\nu_{k_0})+1)$.\\
(iii)\quad$\frac{\delta}{\beta(\lambda_{k_0}+i\nu_{k_0})}\in \mathbb{R}^+$.\\
In what concerns (i), remark that $|4\nu_{k_0}(i\lambda_{k_0}-\nu_{k_0})+1|=1$ 
(since $\nu_{k_0}=\sqrt{\frac{1}{2}-\lambda_{k_0}}$ ), and the fact that $|\frac{\alpha+\beta}{\beta}|=1$ follows from 
the following lemma, since
\begin{eqnarray*}
\overline{\alpha+\beta} &=& \det(K)-2\sqrt{2}i\sum_{1\leq i,j\leq k_0}K_{ij}\\
&=& \det(K-\sqrt{2}i J_{k_0})+\sqrt{2}i\sum_{1\leq i,j\leq k_0}K_{ij}-2\sqrt{2}i\sum_{1\leq i,j\leq k_0}K_{ij}\\
&=& \det(K-\sqrt{2}i J_{k_0})-\sqrt{2}i\sum_{1\leq i,j\leq k_0}K_{ij}\\
&=& \beta,
\end{eqnarray*}
\begin{lemma}\label{lem-mat-cof}
Let $M\in \mathcal{M}_N(\mathbb{C})$ and $J_N$ the matrix whose coefficients are ones. Denote $K=M+XJ$. Then  
$$\det(K)=\det(M)+X\sum_{1\leq i,j\leq N}K_{ij},$$
where the $K_{ij}$ refer to the cofactors of $K$.
\end{lemma}
\begin{proof}
Let $U,V\in \mathbb{C}^N$ be two column vectors. Then  
$$\det(I_N+U.V^t)=1+\langle U,V\rangle,$$
where $\langle,\rangle$ is the usual scalar product in $\mathbb{C}^N.$ In fact, it suffices to expand the operator  
$T=I+U.V^t$ in the basis composed of $V$ and the basis of the orthogonal to $V$.
Assume now that the matrix $K=M+U.V^t$ is invertible. Then we can write 
\begin{eqnarray*}
\det(M)&=& \det(K-U.V^t)\\
&=& \det(K)\det(I_N-K^{-1}U.V^t)\\
&=& \det(K)(1-\langle K^{-1}U,V\rangle)\\
&=& \det(K)-\langle\left(\det(K)K^{-1}U\right),V\rangle\\
&=& \det(K)-\langle Cof(K)U,V\rangle,
\end{eqnarray*}
where $Cof(K)$ is the cofactor matrix of $K$.
The conclusion therefore follows by taking $U=(1,1,...,1)^t$ and $V=(X,X,...,X)^t$. 
Finally, the case $K$ non-invertible is obtained by continuity. 
\end{proof}

For equation (ii), it suffices to prove that  
$$\sqrt{2}(\frac{\lambda_{k_0}}{\nu_{k_0}}-i)\det(\widetilde{K})^2=\det(K^-)\overline{\det(K)}.$$
Since we have assumed that $\quad -\frac{1}{\sqrt{2}}\leq\lambda_k\leq\frac{1}{\sqrt{2}},\quad \forall k\in\{1,2,...,N\},$ 
there exists $\theta_k\in[0,\pi]$ such that
$$\lambda_k=\frac{1}{\sqrt{2}}\cos(\theta_k) \qquad\text{et}\qquad \nu_k=\frac{1}{\sqrt{2}}\sin(\theta_k).$$ We also have
$$\left(K\right)_{kj}=\sqrt{2}\left(\frac{\lambda_k+\lambda_j}{\nu_k+\nu_j}+i\right)=\frac{e^{i\theta_k}+e^{\theta_j}}{\sin(\theta_k)+\sin(\theta_j)}=\frac{e^{i(\frac{\theta_k+\theta_j}{2})}}{\sin(\frac{\theta_k+\theta_j}{2})}.$$
Subtracting column $k_0$ from column $j$ in the matrix $\overline{K}$, we get
\begin{eqnarray*}
\forall k \in \{1,...,k_0\}\quad (\overline{K})_{kj}-(\overline{K})_{kk_0}
&=&\frac{\sin(\frac{\theta_k+\theta_{k_0}}{2})e^{-i(\frac{\theta_j-\theta_{k_0}}{2})}-\sin(\frac{\theta_k+\theta_j}{2})}{\sin(\frac{\theta_k+\theta_j}{2})}(\overline{K})_{kk_0}\\
&=&\frac{\sin(\frac{\theta_k+\theta_{k_0}}{2})-\sin(\frac{\theta_k+\theta_j}{2})e^{i(\frac{\theta_j-\theta_{k_0}}{2})}}{\sin(\frac{\theta_k+\theta_j}{2})}e^{-i(\frac{\theta_j-\theta_{k_0}}{2})}(\overline{K})_{kk_0}\\
&=&\frac{e^{i(\frac{\theta_j+\theta_k}{2})}\sin(\frac{\theta_{k_0}-\theta_j}{2})}{\sin(\frac{\theta_k+\theta_j}{2})}e^{-i(\frac{\theta_j-\theta_{k_0}}{2})}(\overline{K})_{kk_0}\\
&=&\left(e^{-i\frac{\theta_j}{2}}\sin(\frac{\theta_{k_0}-\theta_j}{2})\right)
\left(\frac{e^{-i\frac{\theta_k}{2}}}{\sin(\frac{\theta_{k_0}+\theta_k}{2})}\right)(\widetilde{K})_{kj},\\
\end{eqnarray*}
which implies\footnote{We have used the identity $\sin(a+c)=\sin(a+b)e^{i(b-c)}+\sin(c-b)e^{i(a+b)}.$} that  
\begin{equation}\label{det1}
\det(\overline{K})=\frac{\sqrt{2}}{\sin(\theta_{k_0})}\prod_{j=1}^{k_0-1}\frac{\sin(\frac{\theta_{k_0}-\theta_j}{2})}{\sin(\frac{\theta_{k_0}+\theta_j}{2})}
e^{-i\sum_{j=1}^{k_0}\theta_j}\det(\widetilde{K}).
\end{equation}
A similar argument proves that
\begin{equation}\label{det2}
\det(\widetilde{K})=\prod_{j=1}^{k_0-1}\frac{\sin(\frac{\theta_{k_0}-\theta_j}{2})}{\sin(\frac{\theta_{k_0}+\theta_j}{2})}e^{-i\sum_{j=1}^{k_0-1}\theta_j}\det(K^-).
\end{equation}
Combining (\ref{det1}) and (\ref{det2}) we complete the proof of (ii).

We focus now on assertion (iii). Subtracting the line $k_0$ from each of the previous lines, we find that $\det(\widetilde{K})\in\mathbb{R}$
and using (ii), we obtain
\begin{eqnarray*}
\frac{\bar{\delta}}{\bar{\beta}(\lambda_{k_0}-i\nu_{k_0})}&=& \frac{\det(K^-)}{\det(K)(\lambda_{k_0}-i\nu_{k_0})}\\
&=&\frac{\sqrt{2}\det(\widetilde{K})^2}{\nu_{k_0}|\det(K)|^2}\in \mathbb{R}^+_\ast.\\
\end{eqnarray*}
This allows to define  $\eta_{k_0}^-$ by the  following relationship
\begin{eqnarray*}
e^{-2\nu_{k_0}\eta_{k_0}^-}&=&\frac{\det(\widetilde{K})^2}{\nu_{k_0}^2|\det(K)|^2}\\
&=&\prod_{j=1}^{k_0-1}\frac{\sin^2(\frac{\theta_{k_0}+\theta_j}{2})}{\sin^2(\frac{\theta_{k_0}-\theta_j}{2})}\\
&=&\prod_{j=1}^{k_0-1}\frac{1-\cos(\theta_{k_0}+\theta_j)}{1-\cos(\theta_{k_0}-\theta_j)}\\
&=&\prod_{j=1}^{k_0-1}\frac{1-\cos(\theta_{k_0})\cos(\theta_j)+\sin(\theta_{k_0})\sin(\theta_j)}
{1-\cos(\theta_{k_0})\cos(\theta_j)-\sin(\theta_{k_0})\sin(\theta_j)}\\
&=&\prod_{j=1}^{k_0-1}\frac{1-2\lambda_{k_0}\lambda_j+2\nu_{k_0}\nu_j}
{1-2\lambda_{k_0}\lambda_j-2\nu_{k_0}\nu_j}.
\end{eqnarray*}
Finally, we get
\begin{eqnarray*}
A^-&=&\frac{e^{2i\sum_{j=1}^{k_0}\theta_j}}{4\nu_{k_0}(i\lambda_{k_0}-\nu_{k_0})+1}\\
&=& \frac{e^{2i\sum_{j=1}^{k_0}\theta_j}}{\cos(2\theta_{k_0})+i\sin(2\theta_{k_0})}\\
&=&e^{2i\sum_{j=1}^{k_0-1}\theta_j}.
\end{eqnarray*}
\end{proof}

A parallel argument allows, for the limit $t\rightarrow +\infty$, to obtain
$$\lim_{t\rightarrow +\infty}u(t,\eta+2\lambda_{k_0}t)=A^+U(\eta-\eta_{k_0}^+),$$
where this time $A^+$ and $\eta_{k_0}^+$ are defined as follows
$$e^{-2\nu_{k_0}\eta_{k_0}^+}=\prod_{j=k_0+1}^{N}\frac{1-2\lambda_{k_0}\lambda_j+2\nu_{k_0}\nu_j}
{1-2\lambda_{k_0}\lambda_j-2\nu_{k_0}\nu_j}$$
and $$A^+=e^{2i\sum_{j=k_0+1}^{N}\theta_j}.$$
This completes the proof of Theorem \ref{GP_copm}. 

\bibliographystyle{plain}
\bibliography{biblio3.bib}

\begin{thebibliography}{1}

\bibitem{NS_superfluid}
C.~Coste.
\newblock Nonlinear {S}chr\"{o}dinger equation and superfluid hydrodynamics.
\newblock {\em Eur. Phys. J. B Condens. Matter Phys}, 1(2):245--253, 1998.

\bibitem{Bose_cond}
C.~A.~Jones et~P.~H.~Roberts.
\newblock Motions in a bose condensate v. stability of solitary wave solutions
  of non-linear {S}chr\"{o}dinger equations in two and three dimentions.
\newblock {\em J. Phys. A, Math. Gen}, 19(15):2991--3011, 1986.

\bibitem{Bose_Einstein_cond}
P.~G. Kevrekidis et B. A.~Malomed J.~Cuevas, D. J.~Frantzeskakis.
\newblock Solitons in quasi-one-dimentional {B}ose-{E}instein condensates with
  competing dipolar and local interactions.
\newblock {\em Phys. Rev. A}, 79(5):1--11, 2009.

\bibitem{Zakharof_Shabat_foc}
A.~B.~Shabat et~V.~E.~Zakharov.
\newblock Exact theory of two-dimensional self-focusing and one-dimensional
  self-modulation of waves in nonlinear media.
\newblock {\em Sov. Phys. JETP}, 34:62--69, 1972.

\bibitem{Zakharof_Shabat}
A.~B.~Shabat et~V.~E.~Zakharov.
\newblock Interaction between solitons in a stable medium.
\newblock {\em Sov. Phys. JETP}, 37:823--828, 1973.

\bibitem{GGKM}
M.~Kruskal et R.~Miura C.~Gardner, J.~Greene.
\newblock Korteweg-de-{V}ries equation and generalization.
\newblock {\em Comm. Pure Appl. Math}, 27:97--133, 1974.

\bibitem{Ger-Zng}
P.~G\'{e}rard et~Z.~Zhang.
\newblock Orbital stability of traveling waves for the one-dimensional
  {G}ross-{P}itaevskii equation.
\newblock {\em Math. Pures Appl.}, 91(2):178--210, 2009.

\bibitem{th_scat}
W.~Eckhaus et~A.~V.~Harten.
\newblock {\em The inverse scattering transformation and the theory of
  solitons}, volume~50.
\newblock North-Holland publishing company, Amsterdam, 1981.

\end{thebibliography}
\end{document}